\title{Maximal degree subposets of $\nu$-Tamari lattices}
\author[A.~Dermenjian]{Aram Dermenjian}
\thanks{This research was supported by NSERC and the Heilbronn Institute for Mathematical Research}
\address[A.~Dermenjian]{The University of Manchester and Heilbronn Institute for Mathematical Research}
\email{aram.dermenjian.math@gmail.com}
\urladdr{http://dermenjian.com}
\let\max\relax
\DeclareMathOperator{\max}{max}
\DeclareMathOperator{\horizDist}{horiz}
\NewDocumentCommand{\horiz}{om}{%
  \IfNoValueTF{#1}
  {\horizDist\left( #2 \right)}
  {\horizDist_{#1}\left( #2 \right)}%
}
\DeclareMathOperator{\indeg}{in}
\DeclareMathOperator{\outdeg}{out}
\newcommand{\maxin}{{\max_{\indeg}}}
\newcommand{\maxout}{{\max_{\outdeg}}}
\newcommand{\maxnu}[2]{#1\left( \mbbT_{#2} \right)}
\newcommandx{\maxinnu}[1][1=\nu]{\maxnu{\maxin}{#1}}
\newcommandx{\maxoutnu}[1][1=\nu]{\maxnu{\maxout}{#1}}
\newcommand{\Dnu}[2]{\mcD_{ {#2}_{#1}}}
\newcommandx{\Dinnu}[1][1=\nu]{\Dnu{\maxin}{ {#1}}}
\newcommandx{\Doutnu}[1][1=\nu]{\Dnu{\maxout}{ {#1}}}
\newcommand{\Dinnm}{\Dinnu[{n,m}]}
\newcommand{\Doutnm}{\Doutnu[{n,m}]}
\newcommand{\Tnu}[2]{\mbbT_{ {#2}_{#1}}}
\newcommandx{\Tinnu}[1][1=\nu]{\Tnu{\indeg}{#1}}
\newcommandx{\Toutnu}[1][1=\nu]{\Tnu{\outdeg}{#1}}
\newcommand{\Tinnm}{\Tinnu[{n,m}]}
\newcommand{\Toutnm}{\Toutnu[{n,m}]}
\newcommand{\Tup}[2]{\tau_{#2}\left( #1 \right)}
\newcommand{\Tdown}[2]{\delta_{#2}\left( #1 \right)}
\newcommand{\Gup}[2]{\gamma_{#2}\left( #1 \right)}
\DeclareMathOperator{\LA}{LA}
\DeclareMathOperator{\RA}{RA}
\newcommand{\laf}{\b{\LA}}
\newcommand{\raf}{\b{\RA}}
\begin{document}
%
%

\begin{abstract}
    In this paper, we study two different subposets of the $\nu$-Tamari lattice: one in which all elements have maximal in-degree and one in which all elements have maximal out-degree.
    The maximal in-degree and maximal out-degree of a $\nu$-Dyck path turns out to be the size of the maximal staircase shape path that fits weakly above $\nu$.
    For $m$-Dyck paths of height $n$, we further show that the maximal out-degree poset is poset isomorphic to the $\nu$-Tamari lattice of $(m-1)$-Dyck paths of height $n$, and the maximal in-degree poset is poset isomorphic to the $(m-1)$-Dyck paths of height $n$ together with a greedy order.
    We show these two isomorphisms and give some properties on $\nu$-Tamari lattices along the way.
\end{abstract}

\maketitle

\section{Introduction}
Given a lattice path $\nu$ a $\nu$-Dyck path is a lattice path which is weakly above $\nu$ using only north (N) and east (E) steps.
The $\nu$-Tamari lattice is a partial order on the set of all $\nu$-Dyck paths which was first introduced in \cite{PV-extTam} by Pr\'eville-Ratelle and Viennot as a further generalisation to $m$-Tamari lattices from Tamari lattices.
Tamari lattices were introduced by Tamari in \cite{Tamari_1954} and are precisely the $\nu$-Tamari lattices where $\nu = (NE)^n$ for some $n$.
The $m$-Tamari lattices were then introduced by Bergeron and Pr\'eville-Ratelle in \cite{Bergeron_2012} as a way to study diagonal harmonics.
These $m$-Tamari lattices have been heavily studied in recent years due to their connections with combinatorics, algebra and geometry, see \cite{Francois_2012,BFP-noIntTam,Ceballos_2018,Chapoton_2005}.
In recent years, the $\nu$-Tamari lattices themselves have been receiving more attention, see \cite{Bell_2021_2}, \cite{Bell_2021}, \cite{Ceballos_2018}, \cite{Ceballos_2020}, \cite{Defant_2022} and \cite{Fang_2017}.
The $\nu$-Tamari lattices and their related definitions are given in \autoref{sec:tamari_orders}.

In this article we concurrently study two particular subposets of the $\nu$-Tamari lattices: the subposets whose elements have maximal in-degree or out-degree.
In some sense, the easier of the two subposets is the subposet whose elements are those with maximal out-degree.
In the case where $\nu = (NE^m)^{n}$ the subposet whose elements have maximal out-degree turns out to be poset isomorphic to the $\nu'$-Tamari lattice where $\nu' = (NE^{m-1})^n$.
On the other hand, the subposet whose elements are those with maximal in-degree turns out to be poset isomorphic to the $\nu'$-Dyck paths together with a greedy order.
This greedy order is similar to the Tamari order except we take ``as much as possible'' when going up.
In other words, we use hit points rather than touch points when ascending in order.
We first define these subposets and give some properties in \autoref{sec:subposets}.
Then in \autoref{sec:isomorphisms}, we restrict to $m$-Dyck paths of height $n$ to show the poset isomorphisms.
In the arbitrary $\nu$ case, things break down and there is unfortunately no ``nice'' results as we explain at the end of the article..

\section{Tamari orders}
\label{sec:tamari_orders}
We start by defining the $\nu$-Tamari order and give some basic definitions which we use for the latter half of the paper.

\subsection{\texorpdfstring{$\nu$}{v}-Dyck paths}
Let $\nu$ be a path from $(0,0)$ to $(s_E,s_N)$ consisting exclusively of north and east steps.
The path $\nu$ can be expressed as a word over the alphabet $\left\{ N,E \right\}$ with $s_E$ number of $E$ (east) steps and $s_N$ number of $N$ (north) steps.
A \defn{$\nu$-Dyck path} is a path from $(0,0)$ to $(s_E,s_N)$ consisting exclusively of north and east steps which is weakly above $\nu$.
The set of all $\nu$-Dyck paths is denoted by $\mcD_\nu$.
A \defn{standard Dyck path} of height $n$, denoted $\mcD_n$, is a $\nu$-Dyck path where $\nu$ is the path $(NE)^n$ from $(0,0)$ to $(n,n)$.
An \defn{$m$-Dyck path} of height $n$, denoted $\mcD_{n,m}$, is a $\nu$-Dyck path where $\nu$ is the path $(NE^m)^n$ from $(0,0)$ to $(mn, n)$.

\begin{example}
    Let $\nu$ be the path from $(0,0)$ to $(8,3)$ which is represented by the word $EEEE NEE NE NE$.
    It is then represented by the path with $8$ east steps and $3$ north steps in red in the figure below.
    Let $D$ be the $\nu$-Dyck path represented by the word $EE NEE N NEEEE$ which is weakly above $\nu$.
    It is depicted in the figure below by the black path.
    \begin{center}
        \begin{tikzpicture}[scale=0.7]
            \draw[dotted] (0, 0) grid (8, 3);
            \draw[rounded corners=1, color=BrightRed, ultra thick] (0, 0) -- (4, 0) -- (4, 1) -- (6, 1) -- (6, 2) -- (7,2) -- (7,3) -- (8,3);
            \draw[rounded corners=1, color=Black, ultra thick] (0,0) -- (2,0) -- (2,1) -- (4,1) -- (4,3) -- (8,3);
        \end{tikzpicture}
    \end{center}
\end{example}

\subsection{\texorpdfstring{$\nu$}{v}-Tamari order}
Let $D$ be an arbitrary $\nu$-Dyck path from $(0,0)$ to $(s_E,s_N)$.
Given two points $p$ and $p'$ on the path $D$, let $D_{[p,p']}$ denote the subword of $D$ from $p$ to $p'$.
For $i \in [s_N] = \left\{ 1, 2, \ldots, s_N \right\}$ we denote by $r_i^D$ the point on the path $D$ before we take the $i$-th north step.
The $r_i^D$ is referred to as the \defn{$i$-th right hand point of $D$} since on the $i$-th row, it is the point on the Dyck path which is the furthest to the right.
If $D$ is obvious, we will use $r_i$ as a shorthand for $r_i^D$.

\begin{example}
    \label{ex:nu-Dyck}
    Let $D = EE NEE NE NEEE$ as before.
    Then
    \begin{gather*}
        r_1 = (2,0) \qquad r_2 = (4,1) \qquad r_3 = (4,2)\\
        D_{[(0,0),r_1]} = EE \qquad D_{[(0,0),r_2]} = EE NEE \qquad D_{[r_2,r_3]} = N.
    \end{gather*}

    \begin{center}
        \begin{tikzpicture}[scale=0.8]
            \draw[dotted] (0, 0) grid (8, 3);
            \draw[rounded corners=1, color=BrightRed, ultra thick] (0, 0) -- (4, 0) -- (4, 1) -- (6, 1) -- (6, 2) -- (7,2) -- (7,3) -- (8,3);
            \draw[rounded corners=1, color=Black, ultra thick] (0,0) -- (2,0) -- (2,1) -- (4,1) -- (4,3) -- (8,3);
            \draw (2,0) circle[radius=1pt];
            \fill (2,0) circle[radius=2pt] node[above left] {$r_1$};
            \draw (4,1) circle[radius=1pt];
            \fill (4,1) circle[radius=2pt] node[above left] {$r_2$};
            \draw (4,2) circle[radius=1pt];
            \fill (4,2) circle[radius=2pt] node[above left] {$r_3$};
        \end{tikzpicture}
    \end{center}
\end{example}

Let $\horizDist_\nu: D \to \N$ be the map which sends each point $p$ on the $\nu$-Dyck path $D$ to the maximum number of east steps we can take from $p$ before going past $\nu$.
If the $\nu$ is clear, we will use $\horizDist$ instead of $\horizDist_\nu$.
For a point $p$ on $D$, we call $\horiz[\nu]{p}$ the \defn{horizontal distance of $p$}.

Let $t_i^D$ denote the first point on $D$ which comes after $r_i^D$ such that $\horiz{t_i^D} = \horiz{r_i^D}$.
Let $h_i^D$ denote the first point on $D$ which comes after $r_i^D$ such that $\horiz{h_i^D} = \horiz{r_i^D}$ and where $h_i^D$ is either followed by an east step in $D$ or is the final point in $D$.
We call $t_i^D$ the \defn{$i$-th touch point of $D$} and $h_i^D$ the \defn{$i$-th hit point of $D$}.
As before, if the $D$ is unambiguous, we use $t_i$ and $h_i$ to represent $t_i^D$ and $h_i^D$ respectively.
Since $\horiz{r_i^D}$ is always positive and the final point always has horizontal distance $\horiz{(s_E, s_N)} = 0$, both $t_i^D$ and $h_i^D$ always exist.
Notice that $h_i$ is followed by an east step if and only if $\horiz{h_i} \neq 0$.

Knowing that $D$ can be viewed as an ordered set of points $(p_1, \cdots, p_{s_N + s_E + 1})$, it will be useful to view $\horizDist_\nu$ as a vector where
\[
    \horizDist_{\nu} = \left( \horiz[\nu]{p_1}, \ldots, \horiz[\nu]{p_{s_N + s_E + 1}}\right).
\]
This vector is called the \defn{horizontal distance vector of $D$}.
\begin{example}
    Given the $\nu$-Dyck path $D$ from \autoref{ex:nu-Dyck}, we label each point $p$ on $D$ with the value of $\horiz[\nu]{p}$.
    \begin{center}
        \begin{tikzpicture}[scale=0.8]
            \draw[dotted] (0, 0) grid (8, 3);
            \draw[rounded corners=1, color=BrightRed, ultra thick] (0, 0) -- (4, 0) -- (4, 1) -- (6, 1) -- (6, 2) -- (7,2) -- (7,3) -- (8,3);
            \draw[rounded corners=1, color=Black, ultra thick] (0,0) -- (2,0) -- (2,1) -- (4,1) -- (4,3) -- (8,3);
        \draw (0,0) circle[radius=1pt];
        \fill (0,0) circle[radius=2pt] node[above left] {$4$};
        \draw (1,0) circle[radius=1pt];
        \fill (1,0) circle[radius=2pt] node[above left] {$3$};
        \draw (2,0) circle[radius=1pt];
        \fill (2,0) circle[radius=2pt] node[above left] {$2$};
        \draw (2,1) circle[radius=1pt];
        \fill (2,1) circle[radius=2pt] node[above left] {$4$};
        \draw (3,1) circle[radius=1pt];
        \fill (3,1) circle[radius=2pt] node[above left] {$3$};
        \draw (4,1) circle[radius=1pt];
        \fill (4,1) circle[radius=2pt] node[above left] {$2$};
        \draw (4,2) circle[radius=1pt];
        \fill (4,2) circle[radius=2pt] node[above left] {$3$};
        \draw (4,3) circle[radius=1pt];
        \fill (4,3) circle[radius=2pt] node[above left] {$4$};
        \draw (5,3) circle[radius=1pt];
        \fill (5,3) circle[radius=2pt] node[above left] {$3$};
        \draw (6,3) circle[radius=1pt];
        \fill (6,3) circle[radius=2pt] node[above left] {$2$};
        \draw (7,3) circle[radius=1pt];
        \fill (7,3) circle[radius=2pt] node[above left] {$1$};
        \draw (8,3) circle[radius=1pt];
        \fill (8,3) circle[radius=2pt] node[above left] {$0$};
            \draw[color=ProcessCyan, line width=1, dotted] (2,0) -- (4,1);
            \draw[color=ProcessCyan, line width=1, dotted] (4,1) -- (6,3);
            \draw[color=ProcessCyan, line width=1, dotted] (4,2) -- (5,3);
            \fill (2,0) circle[radius=2pt] node[below right] {$r_1$};
            \fill (4,1) circle[radius=2pt] node[below right] {$r_2$};
            \fill (4,2) circle[radius=2pt] node[below right] {$r_3$};
        \end{tikzpicture}
    \end{center}
    This gives us the horizontal distance vector:
    \[
        \horizDist = (4, 3, 2, 4, 3, 2, 3, 4, 3, 2, 1, 0).
    \]
    For touch and hit points we analyze each of the three rows in $D$.
    When $i = 1$ then $r_1 = (2,0)$ and $\horiz{r_1} = 2$.
    The first point after $r_1$ whose horizontal value is $2$ is the point $(4,1)$.
    Therefore $t_1 = (4,1)$ is the $1$st touch point of $D$.
    Since for the path $D$ there is not an east step after $(4,1)$ it is not the $1$st hit point of $D$.
    The next time a point on $D$ has horizontal distance of $2$ is at $(6,3)$.
    As $(6,3)$ is followed by an east step in $D$ then $h_1 = (6,3)$.
    We end up with the following:
    \begin{gather*}
        t_1 = (4,1) \quad t_2 = (6,3) \quad t_3 = (5,3)\\
        h_1 = (6,3) \quad h_2 = (6,3) \quad h_3 = (5,3)\\
    \end{gather*}
\end{example}

Let $D$ be a $\nu$-Dyck path from $(0,0)$ to $(s_E,s_N)$.
If the $i$-th right hand point $r_i^D$ is preceded by an east step, we define the $\nu$-Dyck path $\Tup{D}{i}$ in the following way.
Let $d$ denote the subword of $D_{[(0,0),r_i^D]}$ where the final $E$ has been removed.
Let $t = D_{[r_i^D, t_i^D]}$ and $f = D_{[t_i^D, (s_E, s_N)]}$, \ie $D = d E t f$.
Then we let $\Tup{D}{i}$ be the word $d t E f$.
In other words, we move the east step before $r_i^D$ to just after the $i$-th touch point $t_i^D$ keeping the rest of the path the same.

\begin{example}
    Continuing our example from before:
    \begin{center}
        \begin{tikzpicture}[scale=0.7]
            \draw[dotted] (0, 0) grid (8, 3);
            \draw[rounded corners=1, color=BrightRed, ultra thick] (0, 0) -- (4, 0) -- (4, 1) -- (6, 1) -- (6, 2) -- (7,2) -- (7,3) -- (8,3);
            \draw[rounded corners=1, color=Black, ultra thick] (0,0) -- (2,0) -- (2,1) -- (4,1) -- (4,3) -- (8,3);
            \draw (2,0) circle[radius=1pt];
            \fill (2,0) circle[radius=2pt] node[above left] {$r_1$};
            \draw (4,1) circle[radius=1pt];
            \fill (4,1) circle[radius=2pt] node[above left] {$r_2$};
            \draw (4,2) circle[radius=1pt];
            \fill (4,2) circle[radius=2pt] node[above left] {$r_3$};
        \end{tikzpicture}
    \end{center}
    Recall that
    \[
        D_{[(0,0),r_1]} = EE \quad D_{[(0,0),r_2]} = EE NEE \quad D_{[(0,0),r_3]} = EE NEE N.
    \]
    Since $r_1$ and $r_2$ are each preceded by an east step, we can find $\Tup{D}{1}$ and $\Tup{D}{2}$, but since $r_3$ is not preceded by an east step, $\Tup{D}{3}$ is not defined.

    We first calculate $\Tup{D}{1}$.
    Recall that $r_1 = (2,0)$ and $t_1 = (4,1)$.
    Then
    \begin{gather*}
        d = E\\
        t = D_{[r_1, t_1]} = D_{[(2,0),(4,1)]} = NEE\\
        f = D_{[t_1, (s_E, s_N)]} = D_{[(4,1),(8,3)]} = NNEEEE
    \end{gather*}
    where $d$ is the subword of $D_{[(0,0),r_1]} = EE$ with the final $E$ removed.
    Therefore $\Tup{D}{1} = d t E f = E NEE E NNEEEE$.
    Drawing $D$ (dashed) and $\Tup{D}{1}$ (solid) in the figure below, we have that $\Tup{D}{1}$ is weakly above $D$ and therefore weakly above $\nu$.
    \begin{center}
        \begin{tikzpicture}[scale=0.6]
            \draw[dotted] (0, 0) grid (8, 3);
            \draw[rounded corners=1, color=BrightRed, ultra thick] (0, 0) -- (4, 0) -- (4, 1) -- (6, 1) -- (6, 2) -- (7,2) -- (7,3) -- (8,3);
            \draw[dashed, rounded corners=1, color=Black, ultra thick] (0,0) -- (2,0) -- (2,1) -- (4,1) -- (4,3) -- (8,3);
            \draw[rounded corners=1, color=Black, ultra thick] (0,0) -- (1,0) -- (1,1) -- (4,1) -- (4,3) -- (8,3);
        \end{tikzpicture}
    \end{center}

    Let us also calculate $\Tup{D}{2}$.
    We have $d = EE NE$, $t = NNEE$, $f = EE$.
    Therefore $\Tup{D}{2} = EE NE NNEE E EE$ and, as before, we have $\Tup{D}{2}$ (solid) weakly above $D$ (dashed) as can be seen in the figure below.
    \begin{center}
        \begin{tikzpicture}[scale=0.6]
            \draw[dotted] (0, 0) grid (8, 3);
            \draw[rounded corners=1, color=BrightRed, ultra thick] (0, 0) -- (4, 0) -- (4, 1) -- (6, 1) -- (6, 2) -- (7,2) -- (7,3) -- (8,3);
            \draw[dashed, rounded corners=1, color=Black, ultra thick] (0,0) -- (2,0) -- (2,1) -- (4,1) -- (4,3) -- (8,3);
            \draw[rounded corners=1, color=Black, ultra thick] (0,0) -- (2,0) -- (2,1) -- (3,1) -- (3,3) -- (8,3);
        \end{tikzpicture}
    \end{center}
\end{example}

The \defn{$\nu$-Tamari order} is then the order on $\nu$-Dyck paths where $D$ is covered by $\Tup{D}{i}$ whenever $\Tup{D}{i}$ is defined.
We denote this poset by $\mbbT_\nu = (\mcD_\nu, \leq_T)$ and let $D \cover_T \Tup{D}{i}$ denote that $D$ is covered by $\Tup{D}{i}$ (whenever it is defined).
This order on $\nu$-Dyck paths was originally defined in \cite{PV-extTam}.
We similarly define $\mbbT_{n,m}  = \left( \mcD_{n,m}, \leq_T \right)$ and $\mbbT_{n} = \left( \mcD_n, \leq_T \right)$ to be the Tamari posets on the $m$-Dyck paths of height $n$ and the standard Dyck paths of height $n$ respectively.
The poset $\mbbT_n$ was first described in \cite{Tamari_1954} and the poset $\mbbT_{n,m}$ was first defined in \cite{Bergeron_2012}.

\begin{example}
    \label{ex:tamari}
    As an example, let $\nu = NEE NEE NEE$ be a path which gives us the $2$-Dyck paths of height $3$.
    Then $\mbbT_\nu$ is given by the following Hasse diagram:
    \begin{center}
        \begin{tikzpicture}[scale=0.3]
            \begin{scope}[shift={(0,0)}]
                \draw[dotted] (0, 0) grid (6, 3);
                \draw[rounded corners=1, color=BrightRed, ultra thick] (0, 0) -- (0, 1) -- (2, 1) -- (2, 2) -- (4, 2) -- (4,3) -- (6,3);
                \draw[rounded corners=1, color=Black, ultra thick] (0, 0) -- (0, 1) -- (2, 1) -- (2, 2) -- (4, 2) -- (4,3) -- (6,3);
            \end{scope}

            \begin{scope}[shift={(7,6)}]
                \draw[dotted] (0, 0) grid (6, 3);
                \draw[rounded corners=1, color=BrightRed, ultra thick] (0, 0) -- (0, 1) -- (2, 1) -- (2, 2) -- (4, 2) -- (4,3) -- (6,3);
                \draw[rounded corners=1, color=Black, ultra thick] (0, 0) -- (0, 1) -- (1, 1) -- (1, 2) -- (4, 2) -- (4,3) -- (6,3);
            \end{scope}
            \begin{scope}[shift={(-7,9)}]
                \draw[dotted] (0, 0) grid (6, 3);
                \draw[rounded corners=1, color=BrightRed, ultra thick] (0, 0) -- (0, 1) -- (2, 1) -- (2, 2) -- (4, 2) -- (4,3) -- (6,3);
                \draw[rounded corners=1, color=Black, ultra thick] (0, 0) -- (0, 1) -- (2, 1) -- (2, 2) -- (3, 2) -- (3,3) -- (6,3);
            \end{scope}

            \begin{scope}[shift={(21,13)}]
                \draw[dotted] (0, 0) grid (6, 3);
                \draw[rounded corners=1, color=BrightRed, ultra thick] (0, 0) -- (0, 1) -- (2, 1) -- (2, 2) -- (4, 2) -- (4,3) -- (6,3);
                \draw[rounded corners=1, color=Black, ultra thick] (0, 0) -- (0, 2) -- (4, 2) -- (4,3) -- (6,3);
            \end{scope}
            \begin{scope}[shift={(7,13)}]
                \draw[dotted] (0, 0) grid (6, 3);
                \draw[rounded corners=1, color=BrightRed, ultra thick] (0, 0) -- (0, 1) -- (2, 1) -- (2, 2) -- (4, 2) -- (4,3) -- (6,3);
                \draw[rounded corners=1, color=Black, ultra thick] (0, 0) -- (0, 1) -- (1, 1) -- (1, 2) -- (3, 2) -- (3,3) -- (6,3);
            \end{scope}
            \begin{scope}[shift={(0,22)}]
                \draw[dotted] (0, 0) grid (6, 3);
                \draw[rounded corners=1, color=BrightRed, ultra thick] (0, 0) -- (0, 1) -- (2, 1) -- (2, 2) -- (4, 2) -- (4,3) -- (6,3);
                \draw[rounded corners=1, color=Black, ultra thick] (0, 0) -- (0, 1) -- (1, 1) -- (1, 2) -- (2, 2) -- (2,3) -- (6,3);
            \end{scope}
            \begin{scope}[shift={(-14,22)}]
                \draw[dotted] (0, 0) grid (6, 3);
                \draw[rounded corners=1, color=BrightRed, ultra thick] (0, 0) -- (0, 1) -- (2, 1) -- (2, 2) -- (4, 2) -- (4,3) -- (6,3);
                \draw[rounded corners=1, color=Black, ultra thick] (0, 0) -- (0, 1) -- (2, 1) -- (2, 3) -- (6,3);
            \end{scope}

            \begin{scope}[shift={(14,19)}]
                \draw[dotted] (0, 0) grid (6, 3);
                \draw[rounded corners=1, color=BrightRed, ultra thick] (0, 0) -- (0, 1) -- (2, 1) -- (2, 2) -- (4, 2) -- (4,3) -- (6,3);
                \draw[rounded corners=1, color=Black, ultra thick] (0,0) -- (0,2) -- (3,2) -- (3,3) -- (6,3);
            \end{scope}
            \begin{scope}[shift={(14,25)}]
                \draw[dotted] (0, 0) grid (6, 3);
                \draw[rounded corners=1, color=BrightRed, ultra thick] (0, 0) -- (0, 1) -- (2, 1) -- (2, 2) -- (4, 2) -- (4,3) -- (6,3);
                \draw[rounded corners=1, color=Black, ultra thick] (0,0) -- (0,2) -- (2,2) -- (2,3) -- (6,3);
            \end{scope}
            \begin{scope}[shift={(7,31)}]
                \draw[dotted] (0, 0) grid (6, 3);
                \draw[rounded corners=1, color=BrightRed, ultra thick] (0, 0) -- (0, 1) -- (2, 1) -- (2, 2) -- (4, 2) -- (4,3) -- (6,3);
                \draw[rounded corners=1, color=Black, ultra thick] (0,0) -- (0,2) -- (1,2) -- (1,3) -- (6,3);
            \end{scope}
            \begin{scope}[shift={(0,37)}]
                \draw[dotted] (0, 0) grid (6, 3);
                \draw[rounded corners=1, color=BrightRed, ultra thick] (0, 0) -- (0, 1) -- (2, 1) -- (2, 2) -- (4, 2) -- (4,3) -- (6,3);
                \draw[rounded corners=1, color=Black, ultra thick] (0,0) -- (0,3) -- (6,3);
            \end{scope}
            \begin{scope}[shift={(-7,31)}]
                \draw[dotted] (0, 0) grid (6, 3);
                \draw[rounded corners=1, color=BrightRed, ultra thick] (0, 0) -- (0, 1) -- (2, 1) -- (2, 2) -- (4, 2) -- (4,3) -- (6,3);
                \draw[rounded corners=1, color=Black, ultra thick] (0,0) -- (0,1) -- (1,1) -- (1,3) -- (6,3);
            \end{scope}

            \draw[->] (3,3.5) -- (9,5.5); 
            \draw[->] (3,3.5) -- (-4,8.5); 

            \draw[->] (-4,12.5) -- (-11, 21.5); 
            \draw[->] (-4,12.5) -- (2,21.5); 
            \draw[->] (10,9.5) -- (10,12.5); 
            \draw[->] (10,9.5) -- (23,12.5); 

            \draw[->] (-11,25.5) -- (-5,30.5); 
            \draw[->] (3,25.5) -- (-4,30.5); 
            \draw[->] (3,25.5) -- (9,30.5); 
            \draw[->] (10,16.5) -- (3,21.5); 
            \draw[->] (10,16.5) -- (16,18.5); 
            \draw[->] (24,16.5) -- (17,18.5); 

            \draw[->] (-4,34.5) -- (2,36.5); 
            \draw[->] (10,34.5) -- (3,36.5); 
            \draw[->] (17,22.5) -- (17,24.5); 
            \draw[->] (17,28.5) -- (10,30.5); 
        \end{tikzpicture}
    \end{center}
\end{example}

\begin{remark}
    \label{rem:touch_to_hit}
    It is worthwhile to remark that going up in the $\nu$-Tamari order turns a touch point into a hit point.
    Indeed, given $D$ a $\nu$-Dyck path, then $\Tup{D}{i}$ exists if $r_i^D$ has a preceding $E$ step.
    Recalling that $\horiz{r_i^D} = \horiz{t_i^D}$ then $D = dEtf$ and $\Tup{D}{i} = dtEf$ imply that $\horiz{r_i^{\Tup{D}{i}}} = \horiz{t_i^{\Tup{D}{i}}} = \horiz{r_i^D} + 1$ since we put an east step after both $r_i^D$ and $t_i^D$.
    But now, since $t_i^{\Tup{D}{i}}$ has an east step following it and since it is the first point after $r_i^{\Tup{D}{i}}$ with the same horizontal distance, then $t_i^{\Tup{D}{i}} = h_i^{\Tup{D}{i}}$.
\end{remark}

\subsection{Reversing the \texorpdfstring{$\nu$}{v}-Tamari order}
Although we know how to go up in the $\nu$-Tamari order, it will be useful for us to know how to go down as well.
Recall that given a $\nu$-Dyck path $D$ then $\Tup{D}{i}$ exists if the $i$-th right hand point $r_i^D$ is preceded by an east step.
Then $D = dEtf$ and $\Tup{D}{i} = dtEf$ and by \autoref{rem:touch_to_hit} we know that in $\Tup{D}{i}$ the $i$-th touch and hit points coincide, \ie $t_i^{\Tup{D}{i}} = h_i^{\Tup{D}{i}}$.
Therefore, to go down in the $\nu$-Tamari order, it suffices to find when hit points and touch points coincide.

Let $D$ be a $\nu$-Dyck path and let $r_i^D$ be its $i$-th right hand point.
If $t_i^D = h_i^D$ and $h_i^D$ is followed by an east step, then let $d$ be the subpath of $D$ from $0$ to $r_i^D$, let $t$ be the subpath from $r_i^D$ to $h_i^D$, and let $f$ be the subpath from $h_i^D$ to the end with the first $E$ removed.
In other words $D = dtEf$.
Let $\Tdown{D}{i}$ denote the $\nu$-Dyck path where $\Tdown{D}{i} = dEtf$ if it exists.
Notice that $\Tup{\Tdown{D}{i}}{i} = D$.

\begin{lemma}
    \label{lem:down-cover}
    Let $D$ be a $\nu$-Dyck path and $\Tdown{D}{i}$ be defined as above.
    Let $r_i^D$, $t_i^D$, and $h_i^D$ be the $i$-th right hand point, touch point and hit point of $D$ respectively.
    Then $\Tdown{D}{i}$ exists and $\Tdown{D}{i} \cover_T D$ if and only if $t_i^D = h_i^D$ and $\horiz{r_i^D} \neq 0$.
\end{lemma}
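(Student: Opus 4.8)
The plan is to prove the two directions separately, with essentially all of the work concentrated in the ``if'' direction. For the ``only if'' direction, I would simply observe that the recipe defining $\Tdown{D}{i}$ can only be carried out when $t_i^D = h_i^D$ \emph{and} when $h_i^D$ is followed by an east step. Since $h_i^D$ is followed by an east step exactly when $\horiz{h_i^D} \neq 0$, and since $\horiz{h_i^D} = \horiz{r_i^D}$ by the definition of a hit point, this second requirement is precisely $\horiz{r_i^D} \neq 0$. Hence if $\Tdown{D}{i}$ exists at all then both conditions on the right already hold, which settles this direction at once; notably the hypothesis $\Tdown{D}{i} \cover_T D$ is not even needed here.

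For the ``if'' direction, assume $t_i^D = h_i^D$ and $\horiz{r_i^D} \neq 0$. Then $\horiz{h_i^D} = \horiz{r_i^D} \neq 0$, so $h_i^D$ is followed by an east step and the factorisation $D = dtEf$ is available, giving the candidate word $\Tdown{D}{i} = dEtf$. Two things remain: that this word is genuinely a $\nu$-Dyck path (that is, weakly above $\nu$), and that it is covered by $D$. Granting the first, the second is immediate: the $i$-th right hand point of $dEtf$ is $r_i^D + (1,0)$, which is preceded by the newly inserted east step, so $\Tup{\Tdown{D}{i}}{i}$ is defined; since by construction $\Tup{\Tdown{D}{i}}{i} = D$ and the covering relation of the $\nu$-Tamari order is by definition given by the up-moves, we obtain $\Tdown{D}{i} \cover_T D$.

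The hard part will be checking that $dEtf$ stays weakly above $\nu$. Here I would compare $D = dtEf$ with $\Tdown{D}{i} = dEtf$ point by point. The factors $d$ and $f$ trace exactly the same lattice points in both paths, so only the points of the factor $t = D_{[r_i^D, h_i^D]}$ are moved, and each is translated one unit east; translating a point one step east decreases its horizontal distance by exactly one. Thus it suffices to show that every point $p$ of $t$ satisfies $\horiz{p} \geq 1$. The two endpoints $r_i^D$ and $h_i^D = t_i^D$ have horizontal distance exactly $\horiz{r_i^D} \geq 1$. For an interior point I would use that the horizontal distance drops by exactly one along each east step and is non-decreasing along each north step, the latter because the rightmost extent of $\nu$ is non-decreasing in height; consequently the value cannot pass below $\horiz{r_i^D}$ without first equalling it, and since $t_i^D$ is the \emph{first} point after $r_i^D$ attaining the value $\horiz{r_i^D}$, every interior point of $t$ in fact has horizontal distance strictly greater than $\horiz{r_i^D} \geq 1$. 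Hence all points of $t$ have horizontal distance at least $1$, their eastward translates have horizontal distance at least $0$, and so $\Tdown{D}{i}$ is weakly above $\nu$. This completes the existence claim, and the cover relation then follows as described above.
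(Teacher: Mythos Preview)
Your proof is correct and takes essentially the same approach as the paper, hinging on the same horizontal-distance analysis of the segment $t = D_{[r_i^D, t_i^D]}$. The only difference is one of emphasis: you use the fact that interior points of $t$ have horizontal distance strictly above $\horiz{r_i^D}$ to verify that $dEtf$ remains weakly above $\nu$ (which the paper asserts without detail), whereas the paper uses the same fact to verify that $t_i^{\Tdown{D}{i}}$ lands in the same row as $t_i^D$ so that $\Tup{\Tdown{D}{i}}{i} = D$ (which you cite as ``by construction'' from the pre-lemma remark); since your argument already contains this information, both accounts are complete.
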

\begin{proof}
    If $\Tdown{D}{i}$ exists and $\Tdown{D}{i} \cover_T D$, then by definition $\Tdown{D}{i} = dEtf$ and $D = dtEf$ where $d = D_{[(0,0), r_i^D]}$, $t = D_{[r_i^D, t_i^D]}$ and $f = D_{[t_i^D, (s_E, s_N)]}$ with the first $E$ removed.
    Since $d$ is a prefix for both $D$ and $\Tdown{D}{i}$ we know that $r_i^{\Tdown{D}{i}}$ is exactly one step to the east of $r_i^D$ (since $d$ is followed by an east step in $\Tdown{D}{i}$ but not in $D$), \ie $\horiz{r_i^{\Tdown{D}{i}}} = \horiz{r_i^D} - 1$.
    This is only true if $\horiz{r_i^D} \neq 0$.
    By \autoref{rem:touch_to_hit}, we know that $t_i^{D} = h_i^{D}$ as desired.

    In the other direction, suppose that $t_i^D = h_i^D$ and $\horiz{r_i^D} \neq 0$.
    Since $\horiz{r_i^D} \neq 0$ we know that $h_i^D$ is followed by an east step.
    Therefore, we can break $D$ down into the components $d$, $t$ and $f$ as defined above.
    Then there exists $\Tdown{D}{i}$ such that $\Tdown{D}{i} = dEtf$.
    It suffices to show that $\Tup{\Tdown{D}{i}}{i} = D$ to show that $\Tdown{D}{i} \cover_T D$, in other words, it suffices to show that $t_i^{\Tdown{D}{i}}$ and $t_i^D$ are in the same row.
    Since $t_i^D$ is the first point in $D$ such that $\horiz{r_i^D} = \horiz{t_i^D}$, then shifting all the points between $r_i^D$ and $t_i^D$ over to the east by one decreases all horizontal distances between the two by $1$.
    In other words, $t_i^{\Tdown{D}{i}}$ is the point one step to the east of $t_i^D$ and is also the first point in $\Tdown{D}{i}$ such that $\horiz{r_i^{\Tdown{D}{i}}} = \horiz{t_i^{\Tdown{D}{i}}}$.
    Therefore, we can go up in the $\nu$-Tamari order covering relation and we get $\Tdown{D}{i} \cover_T D$ as desired.
\end{proof}

\subsection{Subposets of the Tamari poset}
Let $\mcD_\nu$ be the set of $\nu$-Dyck paths and let $\mbbT_\nu = (\mcD_\nu, \leq_T)$ denote the $\nu$-Tamari poset.
For $D \in \mcD_\nu$, let $\indeg(D)$ denote the number of elements covered by $D$ in $\mbbT_\nu$.
In other words,
\[
    \indeg(D) = \order{ \set{i}{\Tup{D}{i} \text{ exists}}} \qquad \outdeg(D) = \order{ \set{i}{\Tdown{D}{i} \text{ exists}}}
\]
Similarly, let $\outdeg(D)$ denote the number of elements which cover $D$ in $\mbbT_\nu$.
We call $\indeg(D)$ the \defn{in-degree of $D$} and similarly we call $\outdeg(D)$ the \defn{out-degree of $D$}.
We set the following notation
\begin{gather*}
    \maxoutnu = \max\set{\outdeg(D)}{D \in \mcD_\nu},\\
    \maxinnu = \max\set{\indeg(D)}{D \in \mcD_\nu},\\
    \Doutnu = \set{D \in \mcD_\nu}{\outdeg(D) = \maxout(\mbbT_\nu)}, \text{ and}\\
    \Dinnu = \set{D \in \mcD_\nu}{\indeg(D) = \maxin(\mbbT_\nu)}.
\end{gather*}
where the first two describe the maximum out-degree (in-degree) of a $\nu$-Dyck path and the latter two are the sets of $\nu$-Dyck paths who have this maximal out-degree (in-degree).
Let $\Toutnu$ be the subposet of $\mbbT_\nu$ restricted to $\Doutnu$ and similarly let $\Tinnu$ be the subposet of $\mbbT_\nu$ restricted to $\Dinnu$.

\begin{example}
    Let $\nu = NEE NEE NEE$ be a path which gives us the $2$-Dyck paths of height $3$ as in \autoref{ex:tamari}.
    By an observation of the Hasse diagram, we note that $\maxinnu = \maxoutnu = 2$.
    We then have the following subposets with $\Tinnu$ on the left and $\Toutnu$ on the right.
    \begin{center}
        \begin{tikzpicture}[scale=0.3]
            \begin{scope}[shift={(0,0)}]
            \begin{scope}[shift={(0,0)}]
                \draw[dotted] (0, 0) grid (6, 3);
                \draw[rounded corners=1, color=BrightRed, ultra thick] (0, 0) -- (0, 1) -- (2, 1) -- (2, 2) -- (4, 2) -- (4,3) -- (6,3);
                \draw[rounded corners=1, color=Black, ultra thick] (0, 0) -- (0, 1) -- (1, 1) -- (1, 2) -- (2, 2) -- (2,3) -- (6,3);
            \end{scope}

            \begin{scope}[shift={(10,0)}]
                \draw[dotted] (0, 0) grid (6, 3);
                \draw[rounded corners=1, color=BrightRed, ultra thick] (0, 0) -- (0, 1) -- (2, 1) -- (2, 2) -- (4, 2) -- (4,3) -- (6,3);
                \draw[rounded corners=1, color=Black, ultra thick] (0,0) -- (0,2) -- (3,2) -- (3,3) -- (6,3);
            \end{scope}
            \begin{scope}[shift={(5,8)}]
                \draw[dotted] (0, 0) grid (6, 3);
                \draw[rounded corners=1, color=BrightRed, ultra thick] (0, 0) -- (0, 1) -- (2, 1) -- (2, 2) -- (4, 2) -- (4,3) -- (6,3);
                \draw[rounded corners=1, color=Black, ultra thick] (0,0) -- (0,2) -- (1,2) -- (1,3) -- (6,3);
            \end{scope}
            \begin{scope}[shift={(0,16)}]
                \draw[dotted] (0, 0) grid (6, 3);
                \draw[rounded corners=1, color=BrightRed, ultra thick] (0, 0) -- (0, 1) -- (2, 1) -- (2, 2) -- (4, 2) -- (4,3) -- (6,3);
                \draw[rounded corners=1, color=Black, ultra thick] (0,0) -- (0,3) -- (6,3);
            \end{scope}
            \begin{scope}[shift={(-5,8)}]
                \draw[dotted] (0, 0) grid (6, 3);
                \draw[rounded corners=1, color=BrightRed, ultra thick] (0, 0) -- (0, 1) -- (2, 1) -- (2, 2) -- (4, 2) -- (4,3) -- (6,3);
                \draw[rounded corners=1, color=Black, ultra thick] (0,0) -- (0,1) -- (1,1) -- (1,3) -- (6,3);
            \end{scope}

            \draw[->] (3,4) -- (-2,7); 
            \draw[->] (3,4) -- (7,7); 
            \draw[->] (-2,12) -- (2,15); 
            \draw[->] (8,12) -- (3,15); 
            \draw[->] (13,4) -- (8,7); 
        \end{scope}
        \begin{scope}[shift={(25,0)}]
            \begin{scope}[shift={(0,0)}]
                \draw[dotted] (0, 0) grid (6, 3);
                \draw[rounded corners=1, color=BrightRed, ultra thick] (0, 0) -- (0, 1) -- (2, 1) -- (2, 2) -- (4, 2) -- (4,3) -- (6,3);
                \draw[rounded corners=1, color=Black, ultra thick] (0, 0) -- (0, 1) -- (2, 1) -- (2, 2) -- (4, 2) -- (4,3) -- (6,3);
            \end{scope}

            \begin{scope}[shift={(5,8)}]
                \draw[dotted] (0, 0) grid (6, 3);
                \draw[rounded corners=1, color=BrightRed, ultra thick] (0, 0) -- (0, 1) -- (2, 1) -- (2, 2) -- (4, 2) -- (4,3) -- (6,3);
                \draw[rounded corners=1, color=Black, ultra thick] (0, 0) -- (0, 1) -- (1, 1) -- (1, 2) -- (4, 2) -- (4,3) -- (6,3);
            \end{scope}
            \begin{scope}[shift={(-5,12)}]
                \draw[dotted] (0, 0) grid (6, 3);
                \draw[rounded corners=1, color=BrightRed, ultra thick] (0, 0) -- (0, 1) -- (2, 1) -- (2, 2) -- (4, 2) -- (4,3) -- (6,3);
                \draw[rounded corners=1, color=Black, ultra thick] (0, 0) -- (0, 1) -- (2, 1) -- (2, 2) -- (3, 2) -- (3,3) -- (6,3);
            \end{scope}

            \begin{scope}[shift={(5,16)}]
                \draw[dotted] (0, 0) grid (6, 3);
                \draw[rounded corners=1, color=BrightRed, ultra thick] (0, 0) -- (0, 1) -- (2, 1) -- (2, 2) -- (4, 2) -- (4,3) -- (6,3);
                \draw[rounded corners=1, color=Black, ultra thick] (0, 0) -- (0, 1) -- (1, 1) -- (1, 2) -- (3, 2) -- (3,3) -- (6,3);
            \end{scope}
            \begin{scope}[shift={(0,24)}]
                \draw[dotted] (0, 0) grid (6, 3);
                \draw[rounded corners=1, color=BrightRed, ultra thick] (0, 0) -- (0, 1) -- (2, 1) -- (2, 2) -- (4, 2) -- (4,3) -- (6,3);
                \draw[rounded corners=1, color=Black, ultra thick] (0, 0) -- (0, 1) -- (1, 1) -- (1, 2) -- (2, 2) -- (2,3) -- (6,3);
            \end{scope}
            \draw[->] (3,4) -- (7,7); 
            \draw[->] (3,4) -- (-2,11); 
            \draw[->] (-2,16) -- (2,23); 
            \draw[->] (8,12) -- (8,15); 
            \draw[->] (8,20) -- (3,23); 
        \end{scope}
        \end{tikzpicture}
    \end{center}
    The subposet on the left is a new poset which will turn out to be poset isomorphic to Dyck paths of height $3$ together with a greedy order.
    The subposet on the right will turn out to be poset isomorphic to the standard Tamari lattice of height $3$.
\end{example}

We discuss these subposets in further detail in \autoref{sec:subposets}.

\subsection{Area functions}
Before going in depth into the subposets $\Toutnu$ and $\Tinnu$, we describe two combinatorial maps which will be useful in our later studies.
Given a path $\nu$ from $(0,0)$ to $(s_E, s_N)$ and a $\nu$-Dyck path $D$, then the \defn{left area function of $D$ with respect to $\nu$} is the function $\laf_D^\nu: [s_N] \to \N$ where $\laf(i)$ is equal to the first component in the coordinates of $r_i^D$.
On the other hand, the \defn{right area function of $D$ with respect to $\nu$} is the function $\raf_D^\nu: [s_N] \to \N$ where $\raf(i)$ is equal to the horizontal distance of $r_i^D$.
Notice that $\laf_D^\nu(i) + \raf_D^\nu(i) = \laf_\nu^\nu(i)$ for all $i \in [s_N]$.

It will be useful to view the functions $\laf_D^\nu$ and $\raf_D^\nu$ as vectors instead of functions.
Therefore, by abuse of notation, we let $\laf_D^\nu = (\laf_D^\nu(1), \ldots, \laf_D^\nu(s_N))$ and $\raf_D^\nu = (\raf_D^\nu(1), \ldots, \raf_D^\nu(s_N))$.
We will respectively call these the \defn{left area vector of $D$ with respect to $\nu$} and \defn{right area vector of $D$ with respect to $\nu$} when viewing these functions as vectors.
Similarly, if $\nu$ is evident, we will use $\laf_D$ to denote $\laf_D^\nu$.

\begin{example}
    Recall the $\nu$-Dyck path $D$ from before:
    \begin{center}
        \begin{tikzpicture}[scale=0.7]
            \draw[dotted] (0, 0) grid (8, 3);
            \draw[rounded corners=1, color=BrightRed, ultra thick] (0, 0) -- (4, 0) -- (4, 1) -- (6, 1) -- (6, 2) -- (7,2) -- (7,3) -- (8,3);
            \draw[rounded corners=1, color=Black, ultra thick] (0,0) -- (2,0) -- (2,1) -- (4,1) -- (4,3) -- (8,3);
        \end{tikzpicture}
    \end{center}
    Then $\laf_D = (2,4,4)$, $\raf_D = (2, 2, 3)$ and $\laf_\nu = (4, 6, 7)$.
\end{example}

\subsection{Staircase shape \texorpdfstring{$\nu$}{v}-Dyck paths}
A particular type of $\nu$-Dyck path which is useful for this study is a staircase shaped path and it is best described using its left area function.
We say that a $\nu$-Dyck path $D$ is a \defn{staircase shape of size $n$} if $D$ is the path $N^a (EN)^n E^b$ for some $a\geq 0$ and $b\geq 0$.
The left area function for a staircase shape $\nu$-Dyck path of size $n$ is given by $(0, \ldots, 0, 1, 2, \ldots,  n)$.
For a given $\nu$, the \defn{maximal staircase shape $\nu$-Dyck path} is the $\nu$-Dyck path that is staircase shape of size $n$ where $n$ is maximal.
Let $\xi_\nu$ denote the maximal staircase shape $\nu$-Dyck path and let $\sigma_\nu$ denote the size of $\xi_\nu$.
\begin{example}
    Suppose $\nu = NNEENEEN$, then the maximal staircase shape $\nu$-Dyck path is $\xi_\nu = N^2(EN)^2 E^2$:
    \begin{center}
        \begin{tikzpicture}[scale=0.7]
            \draw[dotted] (0, 0) grid (4, 4);
            \draw[rounded corners=1, color=BrightRed, ultra thick] (0, 0) -- (0,2) -- (2, 2) -- (2,3) -- (4,3) -- (4,4);
            \draw[rounded corners=1, color=Black, ultra thick] (0,0) -- (0,2) -- (1,2) -- (1,3) -- (2,3) -- (2,4) -- (4,4);
        \end{tikzpicture}
    \end{center}
    The left area vector for this $\nu$-Dyck path is given by $(0, 0, 1, 2)$ and therefore the maximal staircase shape $\nu$-Dyck path has size $\sigma_\nu = 2$.
\end{example}

We next provide a handy algorithm to determine the maximal staircase shape $\nu$-Dyck path using the partition of $\nu$.
\textbf{Staircase algorithm:}
Let $\Lambda = \laf_\nu$ be the left area vector of $\nu$ and start with $i = 1$, proceeding inductively.
\begin{enumerate}
    \item Find the first integer $j$ in $\Lambda$ where $i \leq j$.
    \item If no such integer exists, we're done. Else, we assume $j$ is in the $d$-th component and replace the $d$-th component with $i$.
    \item For every additional component which is equal to $i$, we replace the component with $0$ and pull it to the front of the left area vector.
    \item Let $\Lambda$ be this new left area vector and proceed inductively on $i$.
\end{enumerate}
It is clear that this algorithm maximises the size of our staircase shape.

\begin{example}
    As an example, suppose $\laf_{\nu} = (0, 2, 2, 2, 4)$.
    Then we have the following:
    \begin{enumerate}
        \item[$i = 1$:] The first integer in the left area vector which is greater than or equal to $1$ is the $2$ in the $2$nd component.
            We replace the $2$ with a $1$ to get the left area vector $\Lambda = (0, 1, 2, 2, 4)$.
            Since there are no additional components equal to $1$, we proceed inductively.
        \item[$i = 2$:] The first integer in the partition $(0, 1, 2, 2, 4)$ which is greater than or equal to $2$ is the $2$ in the $3$rd component.
            We replace the $2$ with a $2$ to get the partition $(0, 1, 2, 2, 4)$ (in other words, nothing changes).
            Since there is also a $2$ in the fourth component, we replace the fourth component with a $0$ and pull it to the front, giving us the left area vector $\Lambda = (0, 0, 1, 2, 4)$.
        \item[$i = 3$:] The first integer in the left area vector which is greater than or equal to $3$ is the $4$ in the fifth component.
            We replace the $4$ with a $3$ to get the partition $(0, 0, 1, 2, 3)$.
            Since there are no additional components equal to $3$, we continue.
        \item[$i = 4$:] No integers are greater than or equal to $4$ and so we stop.
    \end{enumerate}
    Therefore the maximal staircase shape $\nu$-Dyck path has size $3$ and is the $\nu$-Dyck path $D = N^2(EN)^3E$.
\end{example}

\section{Two subposets of \texorpdfstring{$\nu$}{v}-Tamari lattices}
\label{sec:subposets}
In this section we investigate more deeply the construction of the subposets $\Toutnu$ and $\Tinnu$.
In particular, we study how to calculate the maximal out-degree (in-degree) and whether a particular element of $\mcD_\nu$ has maximal out-degree (in-degree).

\subsection{Out-degree}
We begin by studying the subposet of $\mbbT_\nu$ whose elements have maximal out-degree.
Let $D$ be a $\nu$-Dyck path from $(0, 0)$ to $(s_E, s_N)$ and recall that $\sigma_\nu$ is the size of the maximal staircase shape $\nu$-Dyck path.
We characterise the elements in $\Doutnu$ by whether or not there are $\sigma_\nu$ number of $r_i$ which are proceeded by an east step.
\begin{lemma}
    \label{lem:maxout-size}
    Let $\nu$ be a path and let $\sigma_\nu$ be the size of the maximal staircase shape $\nu$-Dyck path.
    Then $\sigma_\nu = \maxoutnu$.
\end{lemma}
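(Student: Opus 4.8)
The plan is to translate the out-degree of a $\nu$-Dyck path into a single combinatorial statistic of its left area vector, and then compare that statistic against the shape of $\nu$. First I would record the key reformulation. Since the elements covering $D$ are exactly the paths $\Tup{D}{i}$, and $\Tup{D}{i}$ is defined precisely when the right hand point $r_i^D$ is preceded by an east step, the out-degree counts the indices $i$ for which $r_i^D$ has a preceding east step. Setting $\laf_D(0) = 0$, the point $r_i^D$ is preceded by an east step exactly when $\laf_D(i) > \laf_D(i-1)$, so
\[
    \outdeg(D) = \order{\set{i \in [s_N]}{\laf_D(i) > \laf_D(i-1)}},
\]
the number of strict ascents of the weakly increasing sequence $(0, \laf_D(1), \dots, \laf_D(s_N))$. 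This is the only place where the mechanics of the Tamari cover relation enter; the rest of the argument is about left area vectors.

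For the lower bound $\maxoutnu \geq \sigma_\nu$ I would simply evaluate this statistic on the maximal staircase $\xi_\nu$. Its left area vector is $(0, \dots, 0, 1, 2, \dots, \sigma_\nu)$, whose associated sequence has exactly $\sigma_\nu$ strict ascents; since $\xi_\nu \in \mcD_\nu$, this gives $\maxoutnu \geq \outdeg(\xi_\nu) = \sigma_\nu$.

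The substance of the proof is the reverse inequality. Given any $D$ with $\outdeg(D) = k$, let $i_1 < \dots < i_k$ be the ascent positions. Each ascent raises the value by at least one, so $\laf_D(i_j) \geq j$; combined with the inequality $\laf_D(i) \leq \laf_\nu(i)$, which is exactly the condition that $D$ lies weakly above $\nu$, this yields $\laf_\nu(i_j) \geq j$. Because the $k - j$ indices $i_{j+1}, \dots, i_k$ lie strictly between $i_j$ and $s_N$, we have $i_j \leq s_N - k + j$, and since $\laf_\nu$ is itself weakly increasing, $\laf_\nu(s_N - k + j) \geq \laf_\nu(i_j) \geq j$. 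Hence the staircase whose left area vector is $(0, \dots, 0, 1, 2, \dots, k)$, having value $j$ in position $s_N - k + j$ and $0$ before that, sits componentwise below $\laf_\nu$; it is therefore a genuine $\nu$-Dyck path of staircase size $k$. By maximality of $\sigma_\nu$ we conclude $k \leq \sigma_\nu$, and so $\maxoutnu \leq \sigma_\nu$. Combining the two bounds gives $\maxoutnu = \sigma_\nu$.

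I expect the only delicate point to be the index bookkeeping in the upper bound: one must check that the ascent positions are spread out enough to force $\laf_\nu(s_N - k + j) \geq j$, which is where both the monotonicity of $\laf_\nu$ and the counting estimate $i_j \leq s_N - k + j$ are genuinely used. Everything else reduces to the standard observation that ``weakly above $\nu$'' is equivalent to the componentwise inequality $\laf_D \leq \laf_\nu$ of left area vectors.
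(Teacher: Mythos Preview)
Your argument is correct. You rewrite $\outdeg(D)$ as the number of strict ascents of the (weakly increasing) left area sequence $(0,\laf_D(1),\dots,\laf_D(s_N))$, get the lower bound from $\xi_\nu$, and for the upper bound use the chain of inequalities $j \le \laf_D(i_j) \le \laf_\nu(i_j) \le \laf_\nu(s_N-k+j)$ to certify that the staircase of size $k$ is a $\nu$-Dyck path. The bookkeeping is fine: the ascent positions are distinct integers in $[s_N]$, so $i_j \le s_N-k+j$; and $k \le s_E$ follows from $k \le \laf_D(i_k) \le s_E$, so the staircase $N^{s_N-k}(EN)^kE^{s_E-k}$ is a genuine path.

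This is a genuinely different route from the paper. The paper argues the upper bound geometrically: starting from any $D$, it repeatedly pushes a redundant $E$ in an $EE$ block to the end of the word and a redundant $N$ in an $NN$ block to the beginning, observing that each move keeps the path weakly above $\nu$ and preserves the number of $EN$ substrings (hence $\outdeg$). The end result is literally a staircase shape of size $\outdeg(D)$, contradicting maximality of $\sigma_\nu$. Your approach trades this normalization for a direct inequality on left area vectors: you never manipulate $D$, you only extract from its ascent positions the componentwise bound $\laf_{\text{staircase}} \le \laf_\nu$. The paper's version is shorter and more visual; yours is more self-contained, makes both inequalities explicit (the paper leaves $\sigma_\nu \le \maxoutnu$ implicit), and avoids having to justify that the pushing moves preserve both ``weakly above $\nu$'' and the out-degree.
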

\begin{proof}
    Suppose there exists a $\nu$-Dyck path $D$ which has out-degree larger than $\sigma_\nu$.
    We produce a staircase shape from $D$ in the following way.
    For every two adjacent east edges, we move one of them to the end of $D$ until only one east edge remains in each row.
    Similarly, for every two adjacent north edges, we move one of them to the beginning of $D$ until only one north edge remains in each column.
    These operations don't change $\outdeg(D)$ since we keep the same number of $r_i$ which are proceeded by an east step and followed by a north step.
    This gives us a staircase shape with out-degree $\outdeg(D) > \sigma_\nu$ which means there is a staircase shape of higher size contradicting the fact that $\sigma_\nu$ is the size of the \emph{maximal} staircase shape $\nu$-Dyck path.
\end{proof}

\begin{proposition}
    \label{prop:out-iff-east}
    Let $D$ be a $\nu$-Dyck path from $(0, 0)$ to $(s_E, s_N)$ and let $\sigma_\nu$ be the size of the maximal staircase shape $\nu$-Dyck path.
    Then $\outdeg(D) = \sigma_\nu$ if and only if $\laf_D$ has $\sigma_\nu$ number of $r_i$ which are proceeded by an east step.
\end{proposition}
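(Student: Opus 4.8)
The plan is to identify $\outdeg(D)$ with the number of right-hand points of $D$ that are preceded by an east step, and then read off the characterisation from Lemma~\ref{lem:maxout-size}.

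First I would recall that the $\nu$-Tamari order is defined by declaring that $D$ is covered by each $\Tup{D}{i}$ that exists, and that $\Tup{D}{i}$ exists exactly when $r_i^D$ is preceded by an east step. Hence the elements covering $D$ are precisely the paths $\Tup{D}{i}$ with $r_i^D$ preceded by an east step, and $\outdeg(D)$ is the number of these covers. The only point needing care is that distinct valid indices $i$ yield distinct covers; granting this, $\outdeg(D)$ equals the number of indices $i$ for which $r_i^D$ is preceded by an east step.

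The hard part is therefore this injectivity, which I would settle with left area vectors. Writing $D = dEtf$ with $t = D_{[r_i^D, t_i^D]}$ and $\Tup{D}{i} = dtEf$, the prefix $d$ carries the first $i-1$ north steps and is left untouched, so $\laf_{\Tup{D}{i}}(j) = \laf_D(j)$ for every $j < i$; moreover $d$ now ends one unit to the west of where the $i$-th north step begins, so $\laf_{\Tup{D}{i}}(i) = \laf_D(i) - 1$. Thus if $i < i'$ are two indices with $r_i^D$ and $r_{i'}^D$ both preceded by an east step, the paths $\Tup{D}{i}$ and $\Tup{D}{i'}$ already disagree in the $i$-th coordinate of their left area vectors ($\tau_i$ lowers it by one while $\tau_{i'}$ fixes it, since $i < i'$), so they are different $\nu$-Dyck paths. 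This gives $\outdeg(D) = \order{\set{i}{r_i^D \text{ is preceded by an east step}}}$.

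Finally, Lemma~\ref{lem:maxout-size} supplies $\maxoutnu = \sigma_\nu$, so every $\nu$-Dyck path has $\outdeg(D) \le \sigma_\nu$, and $\outdeg(D) = \sigma_\nu$ records exactly that $D$ attains the maximum. Combining this with the count above, $\outdeg(D) = \sigma_\nu$ holds if and only if $D$ has $\sigma_\nu$ right-hand points preceded by an east step. Apart from the injectivity in the third paragraph, both remaining steps are immediate: one from the definition of the $\nu$-Tamari order and one from Lemma~\ref{lem:maxout-size}.
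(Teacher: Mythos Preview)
Your argument is correct and follows the same approach as the paper, which simply declares the proposition ``a direct consequence of the definition of the $\nu$-Tamari order'' together with Lemma~\ref{lem:maxout-size}. You are in fact more careful than the paper: you supply the injectivity of $i \mapsto \Tup{D}{i}$ via left area vectors, a point the paper leaves implicit (it effectively defines $\outdeg(D)$ as the count of admissible indices $i$ rather than of distinct covers).
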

\begin{proof}
    This is a direct consequence of the definition of the $\nu$-Tamari order.
\end{proof}

For $m$-Dyck paths of height $n$, we have the following corollary.
\begin{corollary}
    \label{cor:outdeg-n}
    Let $D$ be a $m$-Dyck path of height $n$.
    Then $\outdeg(D) = n-1$ if and only if $\laf_D$ is a left area vector of $n$ distinct numbers.
\end{corollary}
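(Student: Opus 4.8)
The plan is to read the corollary off from Lemma~\ref{lem:maxout-size} and Proposition~\ref{prop:out-iff-east} once $\sigma_\nu$ has been computed for the staircase $\nu = (NE^m)^n$. First I would record that $\laf_\nu = (0, m, 2m, \ldots, (n-1)m)$, so running the staircase algorithm replaces each nonzero entry $im$ by $i$ for $1 \le i \le n-1$ and then terminates at $i = n$; hence $\sigma_\nu = n-1$. By Lemma~\ref{lem:maxout-size} this value is exactly $\maxoutnu$, so Proposition~\ref{prop:out-iff-east} tells us that $\outdeg(D) = n-1$ if and only if exactly $n-1$ of the right hand points $r_1, \ldots, r_n$ are preceded by an east step.

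Next I would translate this counting condition into a statement about the entries of $\laf_D$. Since the $x$-coordinate is nondecreasing along $D$, the vector $\laf_D$ is weakly increasing. For $i \ge 2$ the point $r_i$ is reached from $r_{i-1}$ by one north step followed by $\laf_D(i) - \laf_D(i-1)$ east steps, so $r_i$ is preceded by an east step precisely when $\laf_D(i) > \laf_D(i-1)$, i.e. when $\laf_D$ has a strict ascent at position $i$. The crucial boundary observation is that $r_1$ can never be preceded by an east step: because $\nu = (NE^m)^n$ begins with a north step and $D$ lies weakly above $\nu$, the path $D$ must itself begin with a north step, so $r_1 = (0,0)$ is the initial point of $D$.

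Combining these observations, the number of right hand points preceded by an east step equals the number of strict ascents of $\laf_D$, which is at most $n-1$. This number equals $n-1$ if and only if every consecutive pair of $\laf_D$ is a strict ascent, i.e. $\laf_D$ is strictly increasing; and a weakly increasing vector of length $n$ is strictly increasing exactly when its $n$ entries are pairwise distinct. Therefore $\outdeg(D) = n-1$ if and only if $\laf_D$ consists of $n$ distinct numbers, which is the claim.

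The only step carrying genuine content is the boundary analysis at $r_1$: one must use that an $m$-Dyck path necessarily starts with $N$ to see that $r_1$ contributes nothing to the out-degree. This is what pins the threshold at $n-1$ rather than $n$, and it is what makes distinctness of all $n$ entries (rather than of only $n-1$ of them) the correct characterization. Everything else is the routine dictionary between ``preceded by an east step'' and ``strict ascent of the left area vector.''
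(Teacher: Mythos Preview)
Your proof is correct and follows essentially the same approach as the paper: both compute $\sigma_\nu = n-1$ by running the staircase algorithm on $\laf_\nu = (0, m, 2m, \ldots, (n-1)m)$ and then invoke Proposition~\ref{prop:out-iff-east}. The paper leaves the translation from ``$n-1$ of the $r_i$ are preceded by an east step'' to ``$\laf_D$ has $n$ distinct entries'' implicit, whereas you spell out this dictionary (including the boundary observation that $r_1$ is never preceded by an east step because $\nu$ begins with $N$); this added detail is correct and helpful but not a different method.
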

\begin{proof}
    It suffices to show that there is a size $n-1$ staircase shape in the top left corner of $\nu = (NE^m)^n$ (the minimal $m$-Dyck path of height $n$).
    Since the partition $\lambda_\nu$ is given by $(0, m, 2m, \ldots, m(n-2), m(n-1))$, the staircase algorithm will end in $(0, 1, 2, \ldots, n-2, n-1)$.
    Therefore, $n-1$ is the maximal staircase size.
\end{proof}

In \autoref{ss:out-degree-poset} we will show that $\Toutnm$ is poset isomorphic to $\mbbT_{n, m-1}$

\subsection{In-degree}
In this section we aim to have a similar characterisation for $\Dinnu$.
For this it will be practical to study whether or not a particular $\nu$-Dyck path has a maximal number of in-edges.
In particular, we must first figure out what \emph{is} the maximal number of in-edges, \ie what is the value of $\maxinnu$ for an arbitrary $\nu$.

Recall that a $\nu$-Dyck path $D$ is a path weakly above $\nu$ from $(0,0)$ to $(s_E, s_N)$.
By \autoref{lem:down-cover}, the only time we can go down in the $\nu$-Tamari order is for an index $i$ such that $t_i = h_i$ and $\horiz{r_i} \neq 0$.

Suppose that $\nu$ is a path from $(0, 0)$ to $(s_E, s_N)$ and $D$ is the maximal element in the $\nu$-Tamari order, \ie $D = N^{s_N} E^{s_E}$.
We start with the simple case in which $\nu$ is a path such that its left area vector $\laf_\nu$ has no repeating entries.
Since $\laf_\nu$ has no repeating entries, the horizontal distance vector from $(0, 0)$ to $(0, s_N)$ is strictly increasing and from $(0, s_N)$ to $(s_E, s_N)$ is strictly decreasing.
Therefore, no number exists more than twice implying that $t_i = h_i$ for every $i$.
In other words, $\maxinnu$ is equal to either $s_N-1$ or $s_N$ depending on if the first entry of $\laf_\nu$ is equal to $0$ or not.
It is clear that this is maximal.

The case where $\laf_\nu$ has repeating entries is a bit more complicated, but still describable.
We first go through an example to get some intuition.

\begin{example}
    Let $\nu = EENN$ and $D$ be the maximal $\nu$-Dyck path, \ie $D = NNEE$.
    This is given in the following diagram whose points on $D$ are labelled by their horizontal distances and where we have labelled the right hand points.
    \begin{center}
        \begin{tikzpicture}[scale=1]
            \node at (-1,1) {$D = $};
            \draw[dotted] (0, 0) grid (2, 2);
            \draw[rounded corners=1, color=BrightRed, ultra thick] (0, 0) -- (2, 0) -- (2, 2);
            \draw[rounded corners=1, color=Black, ultra thick] (0,0) -- (0,2) -- (2,2);
        \draw (0,0) circle[radius=1pt];
        \fill (0,0) circle[radius=2pt] node[above left] {$2$};
        \draw (0,1) circle[radius=1pt];
        \fill (0,1) circle[radius=2pt] node[above left] {$2$};
        \draw (0,2) circle[radius=1pt];
        \fill (0,2) circle[radius=2pt] node[above left] {$2$};
        \draw (1,2) circle[radius=1pt];
        \fill (1,2) circle[radius=2pt] node[above left] {$1$};
        \draw (2,2) circle[radius=1pt];
        \fill (2,2) circle[radius=2pt] node[above left] {$0$};
            \fill (0,0) circle[radius=2pt] node[below right] {$r_1$};
            \fill (0,1) circle[radius=2pt] node[below right] {$r_2$};
        \end{tikzpicture}
    \end{center}
    First, let us find the touch points and hit points of $D$.
    We have
    \[
        t_1^D = (0,1)\quad h_1^D = t_2^D = h_2^D = (0,2).
    \]
    Since $t_1^D \neq h_1^D$ then $\Tdown{D}{1}$ doesn't exist, but since $t_2^D = h_2^D$ and $h_2^D$ is followed by an east step, $\Tdown{D}{2}$ does exist.
    Therefore we only have one path going down, which is not the maximal in-degree in the Hasse diagram.
    To get around this, we need to get rid of that final $2$, which we do by creating a staircase shape.
    In our example, what this means is we push the top row (and everything above it) over to the right by one.
    \begin{center}
        \begin{tikzpicture}[scale=1]
            \node at (-2,1) {$\Tdown{D}{2} = D' = $};
            \draw[dotted] (0, 0) grid (2, 2);
            \draw[rounded corners=1, color=BrightRed, ultra thick] (0, 0) -- (2, 0) -- (2, 2);
            \draw[rounded corners=1, color=Black, ultra thick] (0,0) -- (0,1) -- (1,1) -- (1, 2) -- (2,2);
        \draw (0,0) circle[radius=1pt];
        \fill (0,0) circle[radius=2pt] node[above left] {$2$};
        \draw (0,1) circle[radius=1pt];
        \fill (0,1) circle[radius=2pt] node[above left] {$2$};
        \draw (1,1) circle[radius=1pt];
        \fill (1,1) circle[radius=2pt] node[above left] {$1$};
        \draw (1,2) circle[radius=1pt];
        \fill (1,2) circle[radius=2pt] node[above left] {$1$};
        \draw (2,2) circle[radius=1pt];
        \fill (2,2) circle[radius=2pt] node[above left] {$0$};
            \fill (0,0) circle[radius=2pt] node[below right] {$r_1$};
            \fill (1,1) circle[radius=2pt] node[below right] {$r_2$};
        \end{tikzpicture}
    \end{center}
    Now we have $t_1^{D'} = h_1^{D'} = (0,1)$ and $t_2^{D'} = h_2^{D'} = (1,2)$ giving us two ways to go down in the $\nu$-Tamari order.
    This turns out to be maximal.

    It is worthwhile to describe exactly why we can't go down in the first row.
    Suppose that we tried to go down in the first row.
    We would shift the $E$ after $h_1^{D}$ to before $r_1^{D}$ giving us the following $\nu$-Dyck path:
    \begin{center}
        \begin{tikzpicture}[scale=1]
            \node at (-1,1) {$D'' = $};
            \draw[dotted] (0, 0) grid (2, 2);
            \draw[rounded corners=1, color=BrightRed, ultra thick] (0, 0) -- (2, 0) -- (2, 2);
            \draw[rounded corners=1, color=Black, ultra thick] (0,0) -- (1,0) -- (1, 2) -- (2,2);
        \draw (0,0) circle[radius=1pt];
        \fill (0,0) circle[radius=2pt] node[above left] {$2$};
        \draw (1,0) circle[radius=1pt];
        \fill (1,0) circle[radius=2pt] node[above left] {$1$};
        \draw (1,1) circle[radius=1pt];
        \fill (1,1) circle[radius=2pt] node[above left] {$1$};
        \draw (1,2) circle[radius=1pt];
        \fill (1,2) circle[radius=2pt] node[above left] {$1$};
        \draw (2,2) circle[radius=1pt];
        \fill (2,2) circle[radius=2pt] node[above left] {$0$};
            \fill (1,0) circle[radius=2pt] node[below right] {$r_1$};
            \fill (1,1) circle[radius=2pt] node[below right] {$r_2$};
        \end{tikzpicture}
    \end{center}
    But now, we only have one way to go up: at the first right hand point.
    Trying to go up at $r_1^{D''}$ gives us the following $\nu$-Dyck path:
    \begin{center}
        \begin{tikzpicture}[scale=1]
            \node at (-1,1) {$D' = $};
            \draw[dotted] (0, 0) grid (2, 2);
            \draw[rounded corners=1, color=BrightRed, ultra thick] (0, 0) -- (2, 0) -- (2, 2);
            \draw[rounded corners=1, color=Black, ultra thick] (0,0) -- (0,1) -- (1,1) -- (1, 2) -- (2,2);
        \end{tikzpicture}
    \end{center}
    which is not our original $\nu$-Dyck path, implying we don't have a downward covering relation here, \ie $D'' \not\cover_T D$.
\end{example}

In order to find a $\nu$-Dyck path with maximal in-degree, we follow a similar approach as above.
We increase the number of times touch points and hit points coincide by creating staircase-like shapes whenever $\lambda_\nu$ has repeated components.
This pushing of rows to make staircases will maximise the number of times hit and touch points coincide.
We encode this in the following algorithm.

\textbf{Dyck path algorithm:}
Start with the maximal $\nu$-Dyck path $D = N^{s_N} E^{s_E}$.
\begin{enumerate}
    \item Let $i\in [s_N]$ be maximal such that $t_i^D \neq h_i^D$ and $\horiz{r_i^D} \neq 0$.
        If no such $i$ exists, we are done.
        Since $t_i^D$ is not followed by an east step, it must be followed by a north step meaning that $t_i^D = r_j^D$ for some $j > i$.
        As $i$ is maximal, this implies $t_j^D = h_j^D$ and $\horiz{r_j^D} = \horiz{r_i^D} \neq 0$.
    \item Then we go down in the $\nu$-Tamari order at $j$ since $h_i^D$ must be followed by an east step (as $\horiz{h_i^D} \neq 0$).
    \item Let $D$ be the $\nu$-Dyck path $\Tdown{D}{j}$ obtained from going down and repeat previous steps.
\end{enumerate}

We can describe the Dyck path algorithm using right area vectors as well.

\textbf{Area algorithm:}
Start with the maximal $\nu$-Dyck path $D = N^{s_N} E^{s_E}$.
\begin{enumerate}
    \item Let $i \in [s_N]$ be maximal such that there exists $j$ where $(\raf_D)_j = (\raf_D)_i \neq 0$ and for all $t$ such that $j > t > i$, then $(\raf_D)_t > (\raf_D)_i$.
        If no such $i$ exists, we are done.
        Let $h > j$ then be the minimal index such that $(\raf_D)_h < (\raf_D)_i$. If no such $h$ exists let $h = s_N + 1$.
    \item Then let $\raf = \raf_D - (0, \ldots, 0, 1, \ldots, 1, 0, \ldots 0)$ where there is a $1$ in all components between $h - 1$ and $j$ inclusively.
    \item Let $D$ be the $\nu$-Dyck path whose area vector is equal to $\raf$ and repeat previous steps.
\end{enumerate}

We say that an \defn{$i$ satisfies the conditions of the area algorithm} if $i$ is maximal such that there exists $j$ where $(\raf_D)_j = (\raf_D)_i \neq 0$ and for all $t$ such that $j > t > i$, then $(\raf_D)_t > (\raf_D)_i$.
We define satisfying the conditions of the Dyck path algorithm similarly. 

These two algorithms produce the same $\nu$-Dyck path.
The main difference is that the area algorithm forgets about the paths themselves and operates on vectors directly.
We first prove this before giving an example of the algorithm(s).
\begin{lemma}
    \label{lem:same-algo}
    Given a path $\nu$, both the Dyck path algorithm and the  area algorithm produce the same $\nu$-Dyck path.
\end{lemma}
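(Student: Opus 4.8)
The plan is to set up a dictionary between the path-level moves of the Dyck path algorithm and the vector-level moves of the area algorithm, and then argue by induction that the two algorithms run in lockstep. Both start from the same maximal path $D = N^{s_N}E^{s_E}$, so it suffices to show that, given a common current path $D$, the two algorithms select the same index and produce the same successor.

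First I would translate the selection rule. Recall that $(\raf_D)_i = \horiz{r_i^D}$. I claim that $t_i^D \neq h_i^D$ together with $\horiz{r_i^D} \neq 0$ holds precisely when there is a $j > i$ with $(\raf_D)_j = (\raf_D)_i \neq 0$ and $(\raf_D)_t > (\raf_D)_i$ for all $i < t < j$. Indeed, $t_i^D \neq h_i^D$ forces $t_i^D$ to be followed by a north step rather than an east step, so $t_i^D$ is a right hand point $r_j^D$ with $\horiz{r_j^D} = \horiz{r_i^D}$; and since each east step lowers the horizontal distance by exactly one while north steps raise it, the first return to the value $\horiz{r_i^D}$ after $r_i^D$ cannot be preceded by any point of strictly smaller horizontal distance, which gives the strict inequalities on the intermediate entries $(\raf_D)_t$. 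This is exactly the condition the area algorithm imposes, so the maximal such $i$, and the associated $j$, coincide for the two algorithms.

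Next I would check that the two update steps agree. Going down at $j$ means forming $\Tdown{D}{j}$, which by definition inserts an east step immediately before $r_j^D$ and thereby shifts the subpath $D_{[r_j^D, t_j^D]}$ one unit east, lowering by one the horizontal distance of every point it contains. Writing $h$ for the row containing $t_j^D$, the right hand points lying inside this subpath are exactly $r_j^D, r_{j+1}^D, \ldots, r_{h-1}^D$: since $t_j^D = h_j^D$ is followed by an east step it lies strictly west of $r_h^D$, whence $\horiz{r_h^D} < \horiz{t_j^D} = (\raf_D)_j$, while the intermediate right hand points all have horizontal distance $> (\raf_D)_j$ by the same argument as above. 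Thus $h$ is the minimal index after $j$ with $(\raf_D)_h < (\raf_D)_j$, with $h = s_N+1$ exactly when $t_j^D$ lies in the top row (which carries no right hand point), and the effect on the right area vector is to subtract $1$ from components $j, \ldots, h-1$, which is precisely the area algorithm's update. (The resulting vector is the right area vector of a genuine $\nu$-Dyck path by \autoref{lem:down-cover}.)

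With the selection rule and the update step shown to coincide, an induction on the number of iterations completes the argument: at every stage the two algorithms make the same choice of $i$ and $j$ and the same modification, and they halt simultaneously when no admissible $i$ remains, so they return the same $\nu$-Dyck path. The one point demanding genuine care, and the main obstacle, is pinning down exactly which right hand points lie inside the shifted block $D_{[r_j^D, t_j^D]}$, that is, proving that the shifted rows are precisely $j$ through $h-1$ with $h$ as characterised by the area algorithm; everything else is bookkeeping translating between points of $D$ and their horizontal distances.
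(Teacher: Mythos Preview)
Your proposal is correct and follows essentially the same approach as the paper: both proofs translate the Dyck-path selection condition ($t_i^D \neq h_i^D$, $\horiz{r_i^D} \neq 0$) into the area-vector condition, identify the row index $h$ with the top of the block shifted by $\Tdown{D}{j}$, and then observe that the east-shift of that block decreases exactly the components $j,\ldots,h-1$ of $\raf_D$. Your write-up is somewhat more explicit than the paper's about the induction framework and about why intermediate right hand points have strictly larger horizontal distance, but the underlying argument is the same.
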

\begin{proof}
    In both algorithms we start with $D = N^{s_N}E^{s_E}$.

    Recall that $\horiz{r_i}$ is the value of the $i$-th component of $\raf_D$.
    This means that for the Dyck path algorithm, finding an $i$ that is maximal in $D$ such that $t_i \neq h_i$ and $\horiz{r_i} \neq 0$ is equivalent to saying that there exists a $j > i$ such that $\horiz{r_j} = \horiz{t_i}$ with the horizontal distances of all $r_t$ between $r_j$ and $r_i$ being greater.
    This is equivalent to saying $i$ is maximal in $[s_N]$ such that there exists a $j > i$ where $(\raf_D)_j = (\raf_D)_i \neq 0$ and for all $t$ where $j > t > i$, then $(\raf_D)_t > (\raf_D)_i$.
    The point $h_i = h_j$ in the Dyck path algorithm, and the fact that it must be followed by an east step is equivalent to finding a minimal $h > j$ (the row on which $h_i$ lies) such that $(\raf_D)_h < (\raf_D)_i$.
    The minimality condition comes from the fact that this $h_i$ is a hit point for $r_i$.

    Finally, going down in the $\nu$-Tamari order takes the $E$ after $h_j (=h_i)$ and moves it before $r_j$.
    This pushes the path between $h_j$ and $r_j$ over to the east by one step.
    In other words, the area vector decreases by one for the $j$-th row and every row up until the $h$-th row.
    In other words, we decrease $\raf_D$ in all components between $j$ and $h-1$ inclusively as stated in the area algorithm as desired.
\end{proof}

Note that we are not going down arbitrary paths in the $\nu$-Tamari order.
We are only going down for particular right hand points which have touch points that do not coincide with hit points.
Let us do an example with a couple of repeated components to understand the two algorithms better.

\begin{example}
    Suppose $\nu = EENNENN$.
    We start with the maximal $\nu$-Dyck path $NNNNEEE$.
    \begin{center}
        \begin{tikzpicture}[scale=0.8]
            \node at (-1, 2) {$D = $};
            \draw[dotted] (0, 0) grid (3, 4);
            \draw[rounded corners=1, color=BrightRed, ultra thick] (0, 0) -- (2, 0) -- (2, 2) -- (3,2) -- (3,4);
            \draw[rounded corners=1, color=Black, ultra thick] (0,0) -- (0,4) -- (3,4);
        \draw (0,0) circle[radius=1pt];
        \fill (0,0) circle[radius=2pt] node[above left] {$2$};
        \draw (0,1) circle[radius=1pt];
        \fill (0,1) circle[radius=2pt] node[above left] {$2$};
        \draw (0,2) circle[radius=1pt];
        \fill (0,2) circle[radius=2pt] node[above left] {$3$};
        \draw (0,3) circle[radius=1pt];
        \fill (0,3) circle[radius=2pt] node[above left] {$3$};
        \draw (0,4) circle[radius=1pt];
        \fill (0,4) circle[radius=2pt] node[above left] {$3$};
        \draw (1,4) circle[radius=1pt];
        \fill (1,4) circle[radius=2pt] node[above left] {$2$};
        \draw (2,4) circle[radius=1pt];
        \fill (2,4) circle[radius=2pt] node[above left] {$1$};
        \draw (3,4) circle[radius=1pt];
        \fill (3,4) circle[radius=2pt] node[above left] {$0$};
            \fill (0,0) circle[radius=2pt] node[below right] {$r_1$};
            \fill (0,1) circle[radius=2pt] node[below right] {$r_2$};
            \fill (0,2) circle[radius=2pt] node[below right] {$r_3$};
            \fill (0,3) circle[radius=2pt] node[below right] {$r_4$};
        \end{tikzpicture}
    \end{center}
    We have labelled $D$ by the horizontal distance vector $\horizDist$.
    Here we have
    \[
        \laf_D = (0, 0, 0, 0) \quad \text{and}\quad \raf_D = (2, 2, 3, 3)
    \]
    Note that there are only two $\nu$-Dyck paths weakly below $D$ as we can only go down in the $2$nd and $4$th rows.
    To increase this, we use the algorithms.

    In terms of the Dyck path algorithm, we first let $i = 3$ as $i$ is the maximal $i$ in which $t_i^D \neq h_i^D$ and $\horiz{r_i^D} \neq 0$.
    So we find a $j > i$ with the same horizontal distance such that $t_j^D = h_j^D$ per the algorithm and push it over one.
    Here $j = 4$ and so we push the top row over by one giving us the $\nu$-Dyck path $D'$ as shown next.
    \begin{center}
        \begin{tikzpicture}[scale=0.8]
            \node at (-2, 2) {$\Tdown{D}{4} = D' = $};
            \draw[dotted] (0, 0) grid (3, 4);
            \draw[rounded corners=1, color=BrightRed, ultra thick] (0, 0) -- (2, 0) -- (2, 2) -- (3,2) -- (3,4);
            \draw[rounded corners=1, color=Black, ultra thick] (0,0) -- (0,3) -- (1,3) -- (1, 4) -- (3,4);
        \draw (0,0) circle[radius=1pt];
        \fill (0,0) circle[radius=2pt] node[above left] {$2$};
        \draw (0,1) circle[radius=1pt];
        \fill (0,1) circle[radius=2pt] node[above left] {$2$};
        \draw (0,2) circle[radius=1pt];
        \fill (0,2) circle[radius=2pt] node[above left] {$3$};
        \draw (0,3) circle[radius=1pt];
        \fill (0,3) circle[radius=2pt] node[above left] {$3$};
        \draw (1,3) circle[radius=1pt];
        \fill (1,3) circle[radius=2pt] node[above left] {$2$};
        \draw (1,4) circle[radius=1pt];
        \fill (1,4) circle[radius=2pt] node[above left] {$2$};
        \draw (2,4) circle[radius=1pt];
        \fill (2,4) circle[radius=2pt] node[above left] {$1$};
        \draw (3,4) circle[radius=1pt];
        \fill (3,4) circle[radius=2pt] node[above left] {$0$};
            \fill (0,0) circle[radius=2pt] node[below right] {$r_1$};
            \fill (0,1) circle[radius=2pt] node[below right] {$r_2$};
            \fill (0,2) circle[radius=2pt] node[below right] {$r_3$};
            \fill (1,3) circle[radius=2pt] node[below right] {$r_4$};
        \end{tikzpicture}
    \end{center}
    In terms of the area algorithm, we have that the last two components match, so we have $i = 3$, $j = 4$ and $h = 5\; (=s_N + 1)$.
    Therefore we have $\raf = (2, 2, 3, 3)- (0, 0, 0, 1) = (2, 2, 3, 2)$ which is precisely $\raf_{D'}$.
    We now have
    \[
        \laf_{D'} = (1, 0, 0, 0) \quad \text{and}\quad \raf_{D'} = (2, 2, 3, 2)
    \]

    The $\nu$-Dyck path $D'$ again only has two $\nu$-Dyck paths weakly below it as we can only go down in the $3$rd and $4$th rows.
    From here, notice we have another $i$ for which $t_i^{D'} \neq h_i^{D'}$ and $\horiz{r_i^{D'}} \neq 0$ at $i = 2$.
    Therefore, $j = 4$ and so we push the fourth row over giving us the following $\nu$-Dyck path.
    \begin{center}
        \begin{tikzpicture}[scale=0.8]
            \node at (-2, 2) {$\Tdown{D'}{4} = D'' = $};
            \draw[dotted] (0, 0) grid (3, 4);
            \draw[rounded corners=1, color=BrightRed, ultra thick] (0, 0) -- (2, 0) -- (2, 2) -- (3,2) -- (3,4);
            \draw[rounded corners=1, color=Black, ultra thick] (0,0) -- (0,3) -- (2,3) -- (2, 4) -- (3,4);
        \draw (0,0) circle[radius=1pt];
        \fill (0,0) circle[radius=2pt] node[above left] {$2$};
        \draw (0,1) circle[radius=1pt];
        \fill (0,1) circle[radius=2pt] node[above left] {$2$};
        \draw (0,2) circle[radius=1pt];
        \fill (0,2) circle[radius=2pt] node[above left] {$3$};
        \draw (0,3) circle[radius=1pt];
        \fill (0,3) circle[radius=2pt] node[above left] {$3$};
        \draw (1,3) circle[radius=1pt];
        \fill (1,3) circle[radius=2pt] node[above left] {$2$};
        \draw (2,3) circle[radius=1pt];
        \fill (2,3) circle[radius=2pt] node[above left] {$1$};
        \draw (2,4) circle[radius=1pt];
        \fill (2,4) circle[radius=2pt] node[above left] {$1$};
        \draw (3,4) circle[radius=1pt];
        \fill (3,4) circle[radius=2pt] node[above left] {$0$};
            \fill (0,0) circle[radius=2pt] node[below right] {$r_1$};
            \fill (0,1) circle[radius=2pt] node[below right] {$r_2$};
            \fill (0,2) circle[radius=2pt] node[below right] {$r_3$};
            \fill (2,3) circle[radius=2pt] node[below right] {$r_4$};
        \end{tikzpicture}
    \end{center}
    In terms of the area algorithm, we have $i = 2$, $j = 4 $ and $h = 5$.
    Therefore $\raf = (2, 2, 3, 2) - (0, 0, 0, 1) = (2, 2, 3, 1)$ and we have
    \[
        \laf_{D''} = (2, 0, 0, 0) \quad \text{and}\quad \raf_{D''} = (2, 2, 3, 1)
    \]

    Again, we're not done as $h_1^{D''} \neq t_1^{D''}$ and $\horiz{r_1^{D''}} \neq 0$.
    Letting $j = 2$ we push the second row over to get the following $\nu$-Dyck path.
    \begin{center}
        \begin{tikzpicture}[scale=0.8]
            \node at (-2, 2) {$\Tdown{D''}{2} = D''' = $};
            \draw[dotted] (0, 0) grid (3, 4);
            \draw[rounded corners=1, color=BrightRed, ultra thick] (0, 0) -- (2, 0) -- (2, 2) -- (3,2) -- (3,4);
            \draw[rounded corners=1, color=Black, ultra thick] (0,0) -- (0,1) -- (1,1) -- (1, 3) -- (2,3) -- (2,4) -- (3, 4);
        \draw (0,0) circle[radius=1pt];
        \fill (0,0) circle[radius=2pt] node[above left] {$2$};
        \draw (0,1) circle[radius=1pt];
        \fill (0,1) circle[radius=2pt] node[above left] {$2$};
        \draw (1,1) circle[radius=1pt];
        \fill (1,1) circle[radius=2pt] node[above left] {$1$};
        \draw (1,2) circle[radius=1pt];
        \fill (1,2) circle[radius=2pt] node[above left] {$2$};
        \draw (1,3) circle[radius=1pt];
        \fill (1,3) circle[radius=2pt] node[above left] {$2$};
        \draw (2,3) circle[radius=1pt];
        \fill (2,3) circle[radius=2pt] node[above left] {$1$};
        \draw (2,4) circle[radius=1pt];
        \fill (2,4) circle[radius=2pt] node[above left] {$1$};
        \draw (3,4) circle[radius=1pt];
        \fill (3,4) circle[radius=2pt] node[above left] {$0$};
            \fill (0,0) circle[radius=2pt] node[below right] {$r_1$};
            \fill (1,1) circle[radius=2pt] node[below right] {$r_2$};
            \fill (1,2) circle[radius=2pt] node[below right] {$r_3$};
            \fill (3,3) circle[radius=2pt] node[below right] {$r_4$};
        \end{tikzpicture}
    \end{center}
    In terms of the area algorithm we have $i = 1 $, $j = 2$ and $h = 4$.
    Therefore $\raf = (2, 2, 3, 1) - (0, 1, 1, 0) = (2, 1, 2, 1)$ and we have
    \[
        \laf_{D'''} = (2, 1, 1, 0) \quad \text{and}\quad \raf_{D'''} = (2, 1, 2, 1)
    \]

    Again, we're not done as $h_2^{D'''} \neq t_2^{D'''}$ and $\horiz{r_2^{D'''}} \neq 0$.
    Letting $j = 4$ we push the second row over to get the following $\nu$-Dyck path.
    \begin{center}
        \begin{tikzpicture}[scale=0.8]
            \node at (-2, 2) {$\Tdown{D'''}{4} = D^{iv} = $};
            \draw[dotted] (0, 0) grid (3, 4);
            \draw[rounded corners=1, color=BrightRed, ultra thick] (0, 0) -- (2, 0) -- (2, 2) -- (3,2) -- (3,4);
            \draw[rounded corners=1, color=Black, ultra thick] (0,0) -- (0,1) -- (1,1) -- (1, 3) -- (3,3) -- (3, 4);
        \draw (0,0) circle[radius=1pt];
        \fill (0,0) circle[radius=2pt] node[above left] {$2$};
        \draw (0,1) circle[radius=1pt];
        \fill (0,1) circle[radius=2pt] node[above left] {$2$};
        \draw (1,1) circle[radius=1pt];
        \fill (1,1) circle[radius=2pt] node[above left] {$1$};
        \draw (1,2) circle[radius=1pt];
        \fill (1,2) circle[radius=2pt] node[above left] {$2$};
        \draw (1,3) circle[radius=1pt];
        \fill (1,3) circle[radius=2pt] node[above left] {$2$};
        \draw (2,3) circle[radius=1pt];
        \fill (2,3) circle[radius=2pt] node[above left] {$1$};
        \draw (3,3) circle[radius=1pt];
        \fill (3,3) circle[radius=2pt] node[above left] {$0$};
        \draw (3,4) circle[radius=1pt];
        \fill (3,4) circle[radius=2pt] node[above left] {$0$};
            \fill (0,0) circle[radius=2pt] node[below right] {$r_1$};
            \fill (1,1) circle[radius=2pt] node[below right] {$r_2$};
            \fill (1,2) circle[radius=2pt] node[below right] {$r_3$};
            \fill (3,3) circle[radius=2pt] node[below right] {$r_4$};
        \end{tikzpicture}
    \end{center}
    In terms of the area algorithm we have $i = 2$, $j = 4$ and $h = 5$.
    Therefore $\raf = (2, 1, 2, 1) - (0, 0, 0, 1) = (2, 1, 2, 0)$ and we have
    \[
        \laf_{D^{iv}} = (3, 1, 1, 0) \quad \text{and}\quad \raf_{D^{iv}} = (2,1,2,0)
    \]

    At this point, notice that in both algorithms we can no longer continue and therefore we are done.

    This $\nu$-Dyck path has exactly three $\nu$-Dyck paths weakly below it which we claim gives the maximal number for the in-degree.
    Notice that the previous two paths \emph{also} have three $\nu$-Dyck paths weakly below it.
    The difference between the previous paths and the final one is that in the final path for every $i$ we have $t_i^{D^{iv}} = h_i^{D^{iv}}$ or $\horiz{r_i^{D^{iv}}} = 0$ which is not true in the previous ones.
\end{example}

We must show that this procedure does produce a $\nu$-Dyck path with maximal in-degree.
Before proving this, we show that the components of $\laf_\nu$ which are equal are special in our algorithm.
\begin{lemma}
    \label{lem:algo-east}
    Let $\nu$ be some path from $(0,0)$ to $(s_E, s_N)$.
    In terms of either algorithm, if a value $j$ exists then $(\laf_\nu)_{j} = (\laf_\nu)_{j-1}$.
\end{lemma}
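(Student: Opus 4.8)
The plan is to prove, by induction on the steps of the area algorithm (which by \autoref{lem:same-algo} agrees with the Dyck path algorithm), a single invariant from which the statement is an immediate byproduct. Throughout I use the identity $(\laf_D)_k + (\raf_D)_k = (\laf_\nu)_k$, the fact that $\laf_D$ and $\laf_\nu$ are weakly increasing, and the convention $(\laf_D)_0 = (\laf_\nu)_0 = 0$. Call $k$ a \emph{repeat position} if $(\laf_\nu)_k = (\laf_\nu)_{k-1}$, and a \emph{non-repeat position} if $(\laf_\nu)_k > (\laf_\nu)_{k-1}$. The invariant I would maintain is
\[
    \text{(P)}\qquad (\laf_D)_k = (\laf_D)_{k-1}\ \text{for every non-repeat position } k,
\]
equivalently: every east step of $D$ sits on a row $k-1$ whose successor position $k$ is a repeat. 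The lemma is precisely the claim that each descent happens at a repeat position, and I extract this from (P) at each step.

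First I would record the one local fact needed about touch points: if $r_j^D = t_i^D$ with $c := \horiz{r_i^D} = \horiz{r_j^D} \neq 0$, then every point strictly between $r_i^D$ and $t_i^D$ on $D$ has horizontal distance strictly greater than $c$. This holds because the step out of $r_i^D$ is the $i$-th north step (which weakly increases horizontal distance), each east step lowers it by one, and $t_i^D$ is the \emph{first} return to the value $c$, so the path cannot reach $c$ again in between without passing through it.

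Next, assuming (P) for the current path $D$, I would show the descent index $j$ (where $r_j^D = t_i^D$) is a repeat position. Suppose instead $j$ is non-repeat. Then (P) forces $(\laf_D)_j = (\laf_D)_{j-1}$, so row $j-1$ carries no east step and $r_j^D$ is reached directly from $r_{j-1}^D$ by the $(j-1)$-th north step. A short computation gives $\horiz{r_{j-1}^D} = c - \bigl((\laf_\nu)_j - (\laf_\nu)_{j-1}\bigr) < c$. But $r_{j-1}^D$ is the point immediately preceding $t_i^D = r_j^D$ and lies weakly after $r_i^D$, so its horizontal distance is at least $c$ (equal to $c$ if $r_{j-1}^D = r_i^D$, strictly greater by the local fact otherwise); this contradiction establishes the lemma at this step.

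The main obstacle is the remaining induction step, namely that (P) is preserved by a descent, because the statement is genuinely non-local: there are valid $\nu$-Dyck paths bearing a legitimate touch point at a non-repeat position, and only reachability by the algorithm excludes them. A descent at $j$ subtracts $(0,\dots,0,1,\dots,1,0,\dots,0)$ (ones on $[j,h-1]$) from $\raf_D$, hence raises $(\laf_D)_k$ by one exactly for $k \in [j,h-1]$; consequently only the two boundary differences change, the one at $k=j$ increasing by one and the one at $k=h$ decreasing by one, all interior differences being unchanged. Since $j$ is a repeat position by the previous paragraph, raising its difference is harmless. For the exit index $h$ I would use its defining property $(\raf_D)_h < c \le (\raf_D)_{h-1}$, which yields $(\laf_D)_h - (\laf_D)_{h-1} > (\laf_\nu)_h - (\laf_\nu)_{h-1} \ge 0$; thus the difference at $h$ is already at least one, so by the contrapositive of (P) the index $h$ must itself be a repeat position (unless $h = s_N + 1$ is out of range), and lowering its difference neither violates (P) nor turns it negative. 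With the trivial base case $D = N^{s_N}E^{s_E}$, where $\laf_D = (0,\dots,0)$ and (P) holds vacuously, the induction closes and the lemma follows.
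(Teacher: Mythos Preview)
Your proof is correct and follows essentially the same induction as the paper's, both resting on the observation that east steps are only introduced on a row $j-1$ when $j$ is a repeat position of $\laf_\nu$. The only cosmetic difference is that the paper tracks this as a history invariant (any east step preceding $r_j^{D_m}$ was placed there at an earlier iteration, which by induction occurred at a repeat position), whereas you package it as the state invariant (P), which obliges you to also verify preservation at the exit index $h$ --- a correct but not strictly necessary extra step.
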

\begin{proof}
    Let $D_0, D_1,\,D_2,\,\cdots,\,D_n$ be the chain of Dyck paths we produce using either algorithm.
    We proceed by (strong/complete) induction on the indices.

    For $D_0 = N^{s_N}E^{s_E}$, then since $D_0$ goes strictly north and then strictly east, the only time a value appears more than twice on the horizontal distance vector is if it appears at least two times in the strictly north portion of $D_0$.
    This only happens when there are two adjacent points with the same horizontal distance.
    Therefore $i = j - 1$ in the area algorithm.
    Since $\raf_{D_0} = \laf_\nu - \laf_{D_0} = \laf_\nu$ and $(\raf_{D_0})_{j-1} = (\raf_{D_0})_i = (\raf_{D_0})_j$ then  $(\laf_\nu)_{j} = (\laf_\nu)_{j-1}$.

    Suppose $D_m$ is our current $\nu$-Dyck path and that there exists an $i$ and a $j > i$ such that $t_i^{D_m} \neq h_i^{D_m}$, $\horiz{r_i^{D_m}} = \horiz{r_j^{D_m}} \neq 0$, and $t_j^{D_m} = h_j^{D_m}$ as required in the Dyck path algorithm.
    If $r_j^{D_m}$ is preceded by a north step, then $r_i^{D_m}$ must be the adjacent point below $r_j^{D_m}$ and, by a similar argument as the $m = 1$ case, we are done.
    Otherwise, $r_j^{D_m}$ is preceded by an east step.
    In particular, $r_j^{D_m}$ is preceded by an east step if at some $D_r$ with $r < m$ the point $r_j^{D_r}$ had an east step placed before it to go to $D_{r+1}$.
    By our induction, this only occurs if $(\laf_\nu)_{j} = (\laf_\nu)_{j-1}$ as desired.
\end{proof}

\begin{corollary}
    \label{cor:east-only-same}
    Let $\nu$ be some path from $(0, 0)$ to $(s_E, s_N)$ and let $D$ be the $\nu$-Dyck path obtained from the Dyck path algorithm.
    If a row $j$ has an east step then $(\laf_\nu)_{j} = (\laf_\nu)_{j-1}$.
\end{corollary}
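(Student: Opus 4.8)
The plan is to reduce the statement to \autoref{lem:algo-east} by running the bookkeeping on the left area vector throughout the algorithm. First I would translate the hypothesis into area-vector language: in the output path $D$, row $j$ (the horizontal strip between heights $j-1$ and $j$) contains an east step if and only if the right hand point $r_j^D$ is preceded by an east step, i.e. $(\laf_D)_j > (\laf_D)_{j-1}$. Since $(\laf_D)_j + (\raf_D)_j = (\laf_\nu)_j$ is constant, I would work with the \emph{area algorithm}, which produces the same path by \autoref{lem:same-algo}, and track the quantity $(\laf_D)_j - (\laf_D)_{j-1}$ (the number of east steps in row $j$ of the current path) as the algorithm runs.

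The core is a one-step computation. A single down-step at down-index $j'$ decreases $\raf_D$ in components $j', \ldots, h-1$, hence increases $(\laf_D)_t = (\laf_\nu)_t - (\raf_D)_t$ by $1$ for exactly those same indices $t$. A short case analysis then shows that $(\laf_D)_j - (\laf_D)_{j-1}$ changes by $+1$ precisely when $j' = j$, by $-1$ precisely when $h = j$, and is unchanged otherwise: for an interior index $j' < j \le h-1$ both $(\laf_D)_j$ and $(\laf_D)_{j-1}$ move up together, so their difference is untouched, and only the two boundaries of the pushed block are affected. Because the algorithm starts at $D_0 = N^{s_N}E^{s_E}$ with $\laf_{D_0} = (0, \ldots, 0)$, the difference $(\laf_D)_j - (\laf_D)_{j-1}$ begins at $0$. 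If the final path has an east step in row $j$, this difference is positive; being a sum of $+1$'s and $-1$'s starting from $0$, it must have received at least one $+1$, so the value $j$ occurs as a down-index at some stage. \autoref{lem:algo-east} then gives $(\laf_\nu)_j = (\laf_\nu)_{j-1}$ immediately. Note that only the \emph{creation} direction is needed here; decrements at $h = j$ merely mean the converse can fail (a down-index need not leave a surviving east step), which is harmless.

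The main thing to get right is exactly that one-step bookkeeping — confirming that within a pushed block $[j', h-1]$ only the bottom boundary row $j'$ gains an east step while the interior rows keep their east-step count, so that a positive final difference is genuinely forced to come from a step with down-index $j$. The remaining subtlety is the edge case $j = 1$: every down-index satisfies $j' > i \ge 1$, and $h > j' \ge 2$, so row $1$ is never pushed and never acquires an east step; thus the hypothesis never holds for $j=1$ and one need not make sense of the undefined $(\laf_\nu)_0$. Everything past the one-step analysis is a direct appeal to \autoref{lem:same-algo} and \autoref{lem:algo-east}.
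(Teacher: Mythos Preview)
Your proposal is correct and follows essentially the same approach as the paper: the paper's one-line proof simply observes that east steps are only ever added to row $j$ when $j$ is the down-index, whence \autoref{lem:algo-east} applies. Your bookkeeping of the quantity $(\laf_D)_j - (\laf_D)_{j-1}$ through the area algorithm makes this implicit reasoning explicit (and carefully handles the harmless possibility of removals at $h = j$, which the paper glosses over), but the underlying argument is identical.
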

\begin{proof}
    Direct result of the previous lemma as we only add east steps on a row $j$ when $(\laf_\nu)_{j} = (\laf_\nu)_{j-1}$.
\end{proof}

We say that a left area vector $\laf_D$ is \defn{shifted down} to $\laf_D'$ if we duplicate the last component of $\laf_D$ and remove the first component of $\laf_D$ to get $\laf_D'$.
For example, the left area vector $(0, 2, 3, 4)$ is shifted down to $(2, 3, 4, 4)$.
We say that a $\nu$-Dyck path $D$ is \defn{shifted down} to $D'$ if its left area vector $\laf_D$ is shifted down to the left area vector $\laf_{D'}$ of $D'$.
Intuitively, shifting down refers to decreasing the $y$ coordinate of every point by $1$ and adding a north step after the final north step.
If our $\nu$-Dyck path begins with a north step, then this is the same thing as moving a north step from the beginning of the path to after the final north step.

\newpage
\begin{lemma}
    \label{lem:algo-stair}
    Let $\nu$ be the path $E^aN^b$ whose maximal staircase shape $\nu$-Dyck path has size $\sigma_\nu$.
    Then the area algorithm (or the Dyck path algorithm) terminates at the staircase shape $\nu$-Dyck path $D$ of size $\sigma_\nu - 1$ (shifted down $b-a$ times if  $a<b$).
    In particular,
    \[
        \raf_D = \begin{cases}
            (a, \ldots, a - b+1)& \text{if }a \geq b,\\
            (a, \ldots, 2, 1, 0, \ldots, 0) & \text{if } a < b,
        \end{cases}
    \]
    and
    \[
        \laf_D = \begin{cases}
            (0, 1, 2, \ldots, b-2, b-1) & \text{if } a \geq b,\\
            (0, 1, 2, \ldots, a-1, a, \ldots, a) & \text{if } a < b.
        \end{cases}
    \]
\end{lemma}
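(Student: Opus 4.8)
The plan is to work entirely with the area algorithm, which by \autoref{lem:same-algo} produces the same path as the Dyck path algorithm, and to track the right area vector $\raf_D$ throughout. Since $\nu = E^aN^b$ has every north step occurring at $x = a$, its left area vector is $\laf_\nu = (a, a, \ldots, a)$ with $b$ entries, and because $\laf_D + \raf_D = \laf_\nu$ it suffices to determine $\raf_D$ alone. I claim the terminal vector is $\raf_D = (\max(a - i + 1, 0))_{i=1}^{b}$; subtracting this from $(a,\ldots,a)$ recovers both displayed forms of $\laf_D$, and reading off the two regimes $a \ge b$ and $a < b$ gives the two displayed expressions for $\raf_D$. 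So the lemma reduces to proving that the algorithm, started at $\raf_{D_0} = \laf_\nu$, halts at this single explicit vector.

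First I would isolate an invariant $\mathcal{I}$ maintained by every iteration: the current vector $\raf$ is weakly decreasing, has first coordinate $a$, is non-negative, and has all consecutive gaps $(\raf)_i - (\raf)_{i+1}$ lying in $\{0,1\}$. The starting vector $(a,\ldots,a)$ clearly satisfies $\mathcal{I}$. The decisive observation is that inside $\mathcal{I}$ the (rather involved) selection rule of the area algorithm collapses to something trivial. If $\raf$ is weakly decreasing, then an index $i$ admits a $j > i$ with $(\raf)_j = (\raf)_i \ne 0$ and strictly larger entries strictly between $i$ and $j$ only when $j = i+1$, since any strictly larger intermediate entry would violate weak monotonicity; thus a valid $i$ is exactly an index with $(\raf)_i = (\raf)_{i+1} \ne 0$, i.e. a \emph{nonzero flat step}, and the algorithm selects the last one. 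Because $i$ is the last flat step and the gaps are unit, the entry at $i+2$ has already dropped by exactly one, which forces $h = i+2$ (or $h = s_N + 1$ when the flat sits at the very end). Consequently the update subtracts $1$ from the single coordinate $i+1$.

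From here the argument is mechanical. Decrementing coordinate $i+1$ turns the flat at $(i, i+1)$ into a unit drop and produces a fresh unit-gap flat at $(i+1, i+2)$ one value lower, so $\mathcal{I}$ is preserved; moreover the total area $\sum_k (\raf)_k$ strictly decreases by $1$ at each step and is bounded below by $0$, so the algorithm terminates. At termination no nonzero flat step remains, which together with $\mathcal{I}$ forces every gap to equal $1$ until the entries reach $0$ and to equal $0$ thereafter; with first coordinate $a$ this pins down $\raf_D = (\max(a-i+1, 0))_i$ exactly. I would then translate back using $\sigma_\nu = \min(a,b)$: when $a \ge b$ the resulting $\laf_D = (0,1,\ldots,b-1)$ is literally the staircase shape $\nu$-Dyck path of size $\sigma_\nu - 1$, and when $a < b$ the output $\laf_D = (0,1,\ldots,a-1,a,\ldots,a)$ is the geometric object described by shifting.

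The main obstacle is the middle step: showing that the general three-part selection rule (on $i$, $j$ and $h$) of the area algorithm degenerates, under the invariant, to the single instruction \emph{decrement the coordinate just past the last nonzero flat step}. Once this collapse is established, the invariant is self-propagating and both termination and the terminal value follow at once. A secondary, purely bookkeeping point is reconciling the $a < b$ output with the ``shifted-down staircase'' description, where shifting acts on the left area vector by duplicating the last coordinate and deleting the first; here I would regard the explicit vectors $\raf_D$ and $\laf_D$ as the rigorous content and verify any staircase identification coordinate-by-coordinate rather than rely on the geometric picture, since that phrasing is cleanest precisely in the $a \ge b$ case.
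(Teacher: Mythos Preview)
Your proposal is correct and follows essentially the same approach as the paper: both track the right area vector through the area algorithm starting from $(a,\ldots,a)$ and observe that it descends to the claimed terminal vector. The paper argues by explicitly displaying the first several iterations and recognising the staircase pattern, whereas you package the same observation as a formal invariant (weakly decreasing, first entry $a$, unit-or-zero gaps) under which the selection rule collapses to ``decrement the coordinate just past the last nonzero flat step''; this is a cleaner and more rigorous rendering of the paper's ``notice that this is just creating a staircase shape'' step, but the underlying mechanism is identical.
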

\begin{proof}
    Let $\nu = E^aN^b$ and $D_0 = N^bE^a$ be our initial $\nu$-Dyck path.
    Then $\raf_{D_0}$ is given by $(a, \ldots, a)$ with $b$ entries.
    Let $D_i$ be the $i$-th $\nu$-Dyck path obtained through the area algorithm.
    Going through the area algorithm, we get the following chain for $\raf_{D_i}$:
    \begin{align*}
        \raf_{D_0} = (a, \ldots, a, a) &\to (a, \ldots, a, a-1) &( = \raf_{D_1})\\
        &\to (a, \ldots, a, a-1, a-1) &( = \raf_{D_2})\\
        &\to (a, \ldots, a, a-1, a-2) &( = \raf_{D_3})\\
        &\to (a, \ldots, a, a-1, a-1, a-2) &( = \raf_{D_4})\\
        &\to (a, \ldots, a, a-1, a-2, a-2) &( = \raf_{D_5})\\
        &\to (a, \ldots, a, a-1, a-2, a-3) &( = \raf_{D_6})\\
        &\to \cdots
    \end{align*}

    Notice that this is just creating a staircase shape and that this terminates when we have run out of duplicate (nonzero) numbers.
    Let $D$ be the final $\nu$-Dyck path obtained through the area algorithm.
    If $a \geq b$ then $\raf_D = (a, \ldots, a-b+1)$ giving us $b$ unique entries.
    If $a < b$ then $\raf_D = (a, a-1, \ldots, 2, 1, 0, \ldots, 0)$.
    To find $\laf_D$ it suffices to recall that $\raf_D = \laf_\nu - \laf_D$.

    It remains to show that this staircase shape has size $\sigma_\nu - 1$.
    Let $x$ and $y$ be the following
    \[
        x = \begin{cases}
            b - a& \text{if }a \leq b\\
            0 & \text{otherwise}
        \end{cases}
        \qquad
        y = \begin{cases}
            a-b& \text{if }a \geq b\\
            0 & \text{otherwise}
        \end{cases}
    \]
    The maximal staircase shape $\nu$-Dyck path for $\nu=(E^aN^b)$ is $N^x(EN)^{\min(a, b)}E^y$ implying $\sigma_\nu = \min(a, b)$.
    Since $\raf_D$ always begins with $a$, $D$ is a staircase shape $\nu$-Dyck path beginning with a north step.
    As each component of $\raf_D$ is one less than the previous one and as all the components of $\laf_\nu$ are equal then we have $D = (NE)^{\min(a, b)}N^xE^y$.
    
    If $a \geq b$ then $D = (NE)^{b}E^{a-b}$ implying that the size of the staircase is $b - 1 = \sigma_\nu - 1$ as $(NE)^{b}E^{a-b} = N(EN)^{b-1}E^{a-b+1}$.
    If $a < b$ then $D = (NE)^{a}N^{b-a}$.
    This $D$ is identical to $D' = N^{b-a}(NE)^{a}=N^{b-a+1}(EN)^{a-1}E$ shifted down $b-a$ times.
    Therefore, $D$ is the staircase shape $\nu$-Dyck path $D'$ (of size $a - 1 = \sigma_\nu - 1$) shifted down $b-a$ times as desired.
\end{proof}

We now ask ourselves, what happens when we have multiple chains of duplicate entries in $\laf_\nu$.
For this, we start with an example with two chains to help gather some intuition.
\begin{example}
    Let $\nu = EEENNENNN$ and we proceed using the area algorithm.
    As always, we start with $D_0 = NNNNNEEEE$ and proceed by going down in the $\nu$-Tamari order.
    We have labelled each $\nu$-Dyck path with the horizontal distance for each right hand point.

    By \autoref{lem:algo-stair}, we know that since the last three steps in $\nu$ are $N$, we will start by constructing a staircase shape near the top.
    \begin{center}
        {\small
        \begin{tikzpicture}[scale=0.6]
            \begin{scope}[shift={(0,0)}]
                \draw[dotted] (0, 0) grid (4, 5);
                \draw[rounded corners=1, color=BrightRed, ultra thick] (0, 0) -- (3, 0) -- (3, 2) -- (4,2) -- (4,5);
                \draw[rounded corners=1, color=Black, ultra thick] (0,0) -- (0,5) -- (4,5);
                \draw (0,0) circle[radius=1pt];
                \fill (0,0) circle[radius=2pt] node[above left] {$3$};
                \draw (0,1) circle[radius=1pt];
                \fill (0,1) circle[radius=2pt] node[above left] {$3$};
                \draw (0,2) circle[radius=1pt];
                \fill (0,2) circle[radius=2pt] node[above left] {$4$};
                \draw (0,3) circle[radius=1pt];
                \fill (0,3) circle[radius=2pt] node[above left] {$4$};
                \draw (0,4) circle[radius=1pt];
                \fill (0,4) circle[radius=2pt] node[above left] {$4$};
                \node at (2, -0.5) {$\raf_{D_0} = (3, 3, 4, 4, 4)$};
            \end{scope}
            \node at (5, 2.5) {$\xrightarrow{i=4}$};
            \begin{scope}[shift={(6,0)}]
                \draw[dotted] (0, 0) grid (4, 5);
                \draw[rounded corners=1, color=BrightRed, ultra thick] (0, 0) -- (3, 0) -- (3, 2) -- (4,2) -- (4,5);
                \draw[rounded corners=1, color=Black, ultra thick] (0,0) -- (0,4) -- (1,4) -- (1, 5) -- (4,5);
                \draw (0,0) circle[radius=1pt];
                \fill (0,0) circle[radius=2pt] node[above left] {$3$};
                \draw (0,1) circle[radius=1pt];
                \fill (0,1) circle[radius=2pt] node[above left] {$3$};
                \draw (0,2) circle[radius=1pt];
                \fill (0,2) circle[radius=2pt] node[above left] {$4$};
                \draw (0,3) circle[radius=1pt];
                \fill (0,3) circle[radius=2pt] node[above left] {$4$};
                \draw (1,4) circle[radius=1pt];
                \fill (1,4) circle[radius=2pt] node[above left] {$3$};
                \node at (2, -0.5) {$\raf_{D_1} = (3, 3, 4, 4, 3)$};
            \end{scope}
            \node at (11, 2.5) {$\xrightarrow{i=3}$};
            \begin{scope}[shift={(12,0)}]
                \draw[dotted] (0, 0) grid (4, 5);
                \draw[rounded corners=1, color=BrightRed, ultra thick] (0, 0) -- (3, 0) -- (3, 2) -- (4,2) -- (4,5);
                \draw[rounded corners=1, color=Black, ultra thick] (0,0) -- (0,3) -- (1,3) -- (1, 5) -- (4,5);
                \draw (0,0) circle[radius=1pt];
                \fill (0,0) circle[radius=2pt] node[above left] {$3$};
                \draw (0,1) circle[radius=1pt];
                \fill (0,1) circle[radius=2pt] node[above left] {$3$};
                \draw (0,2) circle[radius=1pt];
                \fill (0,2) circle[radius=2pt] node[above left] {$4$};
                \draw (1,3) circle[radius=1pt];
                \fill (1,3) circle[radius=2pt] node[above left] {$3$};
                \draw (1,4) circle[radius=1pt];
                \fill (1,4) circle[radius=2pt] node[above left] {$3$};
                \node at (2, -0.5) {$\raf_{D_2} = (3, 3, 4, 3, 3)$};
            \end{scope}
            \node at (17, 2.5) {$\xrightarrow{i=4}$};
            \begin{scope}[shift={(18,0)}]
                \draw[dotted] (0, 0) grid (4, 5);
                \draw[rounded corners=1, color=BrightRed, ultra thick] (0, 0) -- (3, 0) -- (3, 2) -- (4,2) -- (4,5);
                \draw[rounded corners=1, color=Black, ultra thick] (0,0) -- (0,3) -- (1,3) -- (1, 4) -- (2,4) -- (2,5) -- (4,5);
                \draw (0,0) circle[radius=1pt];
                \fill (0,0) circle[radius=2pt] node[above left] {$3$};
                \draw (0,1) circle[radius=1pt];
                \fill (0,1) circle[radius=2pt] node[above left] {$3$};
                \draw (0,2) circle[radius=1pt];
                \fill (0,2) circle[radius=2pt] node[above left] {$4$};
                \draw (1,3) circle[radius=1pt];
                \fill (1,3) circle[radius=2pt] node[above left] {$3$};
                \draw (2,4) circle[radius=1pt];
                \fill (2,4) circle[radius=2pt] node[above left] {$2$};
                \node at (2, -0.5) {$\raf_{D_3} = (3, 3, 4, 3, 2)$};
            \end{scope}
        \end{tikzpicture}
        }
    \end{center}
    In other words, the algorithm converted $4, 4, 4$ to $4, 3, 2$ to have a staircase shape for the first chain.
    Continuing the algorithm, we have
    \begin{center}
        {\small
        \begin{tikzpicture}[scale=0.6]
            \node at (-1, 2.5) {$\xrightarrow{i=2}$};
            \begin{scope}[shift={(0,0)}]
                \draw[dotted] (0, 0) grid (4, 5);
                \draw[rounded corners=1, color=BrightRed, ultra thick] (0, 0) -- (3, 0) -- (3, 2) -- (4,2) -- (4,5);
                \draw[rounded corners=1, color=Black, ultra thick] (0,0) -- (0, 3) -- (2, 3) -- (2, 5) -- (4, 5);
                \draw (0,0) circle[radius=1pt];
                \fill (0,0) circle[radius=2pt] node[above left] {$3$};
                \draw (0,1) circle[radius=1pt];
                \fill (0,1) circle[radius=2pt] node[above left] {$3$};
                \draw (0,2) circle[radius=1pt];
                \fill (0,2) circle[radius=2pt] node[above left] {$4$};
                \draw (2,3) circle[radius=1pt];
                \fill (2,3) circle[radius=2pt] node[above left] {$2$};
                \draw (2,4) circle[radius=1pt];
                \fill (2,4) circle[radius=2pt] node[above left] {$2$};
                \node at (2, -0.5) {$\raf_{D_4} = (3, 3, 4, 2, 2)$};
            \end{scope}
            \node at (5, 2.5) {$\xrightarrow{i=4}$};
            \begin{scope}[shift={(6,0)}]
                \draw[dotted] (0, 0) grid (4, 5);
                \draw[rounded corners=1, color=BrightRed, ultra thick] (0, 0) -- (3, 0) -- (3, 2) -- (4,2) -- (4,5);
                \draw[rounded corners=1, color=Black, ultra thick] (0,0) -- (0, 3) -- (2, 3) -- (2, 4) -- (3, 4) --  (3, 5) -- (4, 5);
                \draw (0,0) circle[radius=1pt];
                \fill (0,0) circle[radius=2pt] node[above left] {$3$};
                \draw (0,1) circle[radius=1pt];
                \fill (0,1) circle[radius=2pt] node[above left] {$3$};
                \draw (0,2) circle[radius=1pt];
                \fill (0,2) circle[radius=2pt] node[above left] {$4$};
                \draw (2,3) circle[radius=1pt];
                \fill (2,3) circle[radius=2pt] node[above left] {$2$};
                \draw (3,4) circle[radius=1pt];
                \fill (3,4) circle[radius=2pt] node[above left] {$1$};
                \node at (2, -0.5) {$\raf_{D_5} = (3, 3, 4, 2, 1)$};
            \end{scope}
        \end{tikzpicture}
        }
    \end{center}
    Since the second set of $N$ steps in $\nu$ is such that the top most right hand point has a hit point inside the first staircase, we must grab an east step from this staircase, which then propagates this east step so that it is as if it was pulled from the top row.
    
    Finishing up the algorithm, we have the final three steps
    \begin{center}
        {\small
        \begin{tikzpicture}[scale=0.6]
            \node at (-1, 2.5) {$\xrightarrow{i=1}$};
            \begin{scope}[shift={(0,0)}]
                \draw[dotted] (0, 0) grid (4, 5);
                \draw[rounded corners=1, color=BrightRed, ultra thick] (0, 0) -- (3, 0) -- (3, 2) -- (4,2) -- (4,5);
                \draw[rounded corners=1, color=Black, ultra thick] (0,0) -- (0, 1) -- (1, 1) -- (1, 3) -- (2, 3) -- (2, 4) -- (3, 4) --  (3, 5) -- (4, 5);
                \draw (0,0) circle[radius=1pt];
                \fill (0,0) circle[radius=2pt] node[above left] {$3$};
                \draw (1,1) circle[radius=1pt];
                \fill (1,1) circle[radius=2pt] node[above left] {$2$};
                \draw (1,2) circle[radius=1pt];
                \fill (1,2) circle[radius=2pt] node[above left] {$3$};
                \draw (2,3) circle[radius=1pt];
                \fill (2,3) circle[radius=2pt] node[above left] {$2$};
                \draw (3,4) circle[radius=1pt];
                \fill (3,4) circle[radius=2pt] node[above left] {$1$};
                \node at (2, -0.5) {$\raf_{D_6} = (3, 2, 3, 2, 1)$};
            \end{scope}
            \node at (5, 2.5) {$\xrightarrow{i=2}$};
            \begin{scope}[shift={(6,0)}]
                \draw[dotted] (0, 0) grid (4, 5);
                \draw[rounded corners=1, color=BrightRed, ultra thick] (0, 0) -- (3, 0) -- (3, 2) -- (4,2) -- (4,5);
                \draw[rounded corners=1, color=Black, ultra thick] (0,0) -- (0, 1) -- (1, 1) -- (1, 3) -- (3, 3) -- (3, 5) -- (4, 5);
                \draw (0,0) circle[radius=1pt];
                \fill (0,0) circle[radius=2pt] node[above left] {$3$};
                \draw (1,1) circle[radius=1pt];
                \fill (1,1) circle[radius=2pt] node[above left] {$2$};
                \draw (1,2) circle[radius=1pt];
                \fill (1,2) circle[radius=2pt] node[above left] {$3$};
                \draw (3,3) circle[radius=1pt];
                \fill (3,3) circle[radius=2pt] node[above left] {$1$};
                \draw (3,4) circle[radius=1pt];
                \fill (3,4) circle[radius=2pt] node[above left] {$1$};
                \node at (2, -0.5) {$\raf_{D_7} = (3, 2, 3, 1, 1)$};
            \end{scope}
            \node at (11, 2.5) {$\xrightarrow{i=4}$};
            \begin{scope}[shift={(12,0)}]
                \draw[dotted] (0, 0) grid (4, 5);
                \draw[rounded corners=1, color=BrightRed, ultra thick] (0, 0) -- (3, 0) -- (3, 2) -- (4,2) -- (4,5);
                \draw[rounded corners=1, color=Black, ultra thick] (0,0) -- (0, 1) -- (1, 1) -- (1, 3) -- (3, 3) -- (3, 4) -- (4,4) -- (4, 5);
                \draw (0,0) circle[radius=1pt];
                \fill (0,0) circle[radius=2pt] node[above left] {$3$};
                \draw (1,1) circle[radius=1pt];
                \fill (1,1) circle[radius=2pt] node[above left] {$2$};
                \draw (1,2) circle[radius=1pt];
                \fill (1,2) circle[radius=2pt] node[above left] {$3$};
                \draw (3,3) circle[radius=1pt];
                \fill (3,3) circle[radius=2pt] node[above left] {$1$};
                \draw (4,4) circle[radius=1pt];
                \fill (4,4) circle[radius=2pt] node[above left] {$0$};
                \node at (2, -0.5) {$\raf_{D_8} = (3, 2, 3, 1, 0)$};
            \end{scope}
        \end{tikzpicture}
        }
    \end{center}
\end{example}

For ease of notation, for the rest of this section we let $D^{(k)}$ denote the \emph{first} $\nu$-Dyck path obtained from the area (or Dyck path) algorithm such that for all $j > k$ then either $(\raf_{D^{(k)}})_j \neq (\raf_{D^{(k)}})_k$ or $(\raf_{D^{(k)}})_j = 0$ or there exists $t$ such that $j > t > k$ for which $(\raf_{D^{(k)}})_t < (\raf_{D^{(k)}})_k$.
In other words, $D^{(k)}$ is the first $\nu$-Dyck path obtained in the algorithm in which every component (weakly) greater than the $k$-th row doesn't satisfy the conditions of the area algorithm.
If $k = s_N$  then $D^{(s_N)}$ is our starting $\nu$-Dyck path and by $D^{(1)}$ we mean the $\nu$-Dyck path obtained at the end of the area algorithm in which no component satisfies the conditions.
In our example above, we can read the $D^{(k)}$ from the diagram where $D^{(k)}$ is the $\nu$-Dyck path right before the first arrow labelled with $i = a$ where $a<k$:
\[
    D^{(5)} = D_0,\, D^{(4)} = D_1,\, D^{(3)} = D_3,\, D^{(2)} = D_5,\, \text{and } D^{(1)} = D_8.
\]

We next describe a formula on the chain of $D^{(k)}$ depending on our choice of $\nu$.
Before giving the formula, we give a technical lemma which will help in future proofs.
\begin{lemma}
    \label{lem:ti-is-hi-on-top}
    Let $\nu$ be a path and let $D^{(k+1)}$ be the first $\nu$-Dyck path obtained from the area (or Dyck path) algorithm where the conditions of the algorithm are not satisfied for all indices weakly greater than $k+1$.
    If $(\laf_\nu)_k > 0$ and $t_k^{D^{(k+1)}} = h_k^{D^{(k+1)}}$ then $h_k^{D^{(k+1)}}$ must be on the top row.
\end{lemma}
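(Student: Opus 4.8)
The plan is to write $D=D^{(k+1)}$ and $c=\horiz{r_k^{D}}=(\raf_{D})_k$, and to begin by pinning down $\raf_{D}$ on all rows of index $\leq k+1$. Since $D^{(k+1)}$ is by definition the \emph{first} path in the run for which no index $\geq k+1$ satisfies the conditions of the algorithm, every path strictly before $D^{(k+1)}$ still admits a valid source of index $\geq k+1$; as each step selects the \emph{maximal} valid source, every move on the way to $D^{(k+1)}$ (including the last one) has source $\geq k+1$, hence target $\geq k+2$. Consequently no component of index $\leq k+1$ is ever decremented, so $(\raf_{D})_i=(\laf_\nu)_i$ for all $i\leq k+1$. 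In particular $c=(\laf_\nu)_k>0$ by hypothesis, and $(\raf_{D})_{k+1}=(\laf_\nu)_{k+1}\geq(\laf_\nu)_k=c$.

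Next I would reduce the statement to the claim that $(\raf_{D})_i>c$ for every $i\in\{k+1,\dots,s_N\}$, by reading off the first return of the horizontal distance to the value $c$. Let $q$ be the least index $>k$ with $(\raf_{D})_q\leq c$. Since $(\raf_{D})_j$ is the minimum of $\horiz{\cdot}$ on row $j$, the horizontal distance stays $>c$ on every row strictly between $k$ and $q$. If $(\raf_{D})_q=c$, the first return to $c$ occurs exactly at the right hand point $r_q$, which is followed by a north step, so $t_k^{D}=r_q\neq h_k^{D}$, contradicting the hypothesis; if $(\raf_{D})_q<c$, the horizontal distance descends through $c$ at an interior, east-followed point of row $q$, so that $h_k^{D}=t_k^{D}$ sits at height $q-1<s_N$; and if no such $q$ exists, the first return lands on the top row, which is exactly the desired conclusion. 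Thus, given $t_k^{D}=h_k^{D}$, the value $c$ is never matched again and it suffices to exclude a drop of $\raf_{D}$ below $c$.

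The crux, and the step I expect to be the main obstacle, is ruling out this drop. Here I would invoke two structural facts about the area algorithm: the vector $\raf$ is nonincreasing in each component along the entire run (each step subtracts a $0/1$ vector), and at any step with source $i$ every component of the decremented range is $\geq(\raf)_i$. Suppose for contradiction that $p$ is the least index $>k$ with $(\raf_{D})_p<c$; by the first paragraph $(\raf_{D})_{k+1}\geq c$, so $p\geq k+2$, and $(\raf_{D})_{k+1},\dots,(\raf_{D})_{p-1}>c$. Because $(\raf)_p$ starts at $(\laf_\nu)_p\geq(\laf_\nu)_k=c$ and only decreases, some move $M$ in the run takes $(\raf)_p$ from $c$ down to $c-1$. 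The source $i_0$ of $M$ lies strictly below its target, which is $\leq p$, and is $\geq k+1$; hence $i_0\in\{k+1,\dots,p-1\}$. Writing $w$ for the matched value of $M$, row $p$ lies in the decremented range of $M$, so its value there, namely $c$, is $\geq w$, giving $w\leq c$. On the other hand $i_0\in\{k+1,\dots,p-1\}$ forces the final value $(\raf_{D})_{i_0}>c$, and monotonicity of $\raf$ gives $w\geq(\raf_{D})_{i_0}>c$, a contradiction. Hence no index $>k$ has $\raf_{D}$ at or below $c$, the first return of the horizontal distance to $c$ is on the top row, and $h_k^{D}=t_k^{D}$ lies there. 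The delicate point to handle carefully is that settledness is \emph{not} monotone along the algorithm (a later low-index move can reactivate a higher row), so one cannot simply induct row by row; it is precisely the global monotonicity of $\raf$ together with the source/range inequality that circumvents this.
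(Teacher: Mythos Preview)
Your argument is correct and follows a genuinely different route from the paper's.

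The paper works in the Dyck path language: assuming $h_k^{D^{(k+1)}}$ is not on the top row, it locates the earlier iteration that deposited the east step immediately after $h_k^{D^{(k+1)}}$, identifies the source index $i$ of that iteration, and argues that the corresponding right-hand point $r_i$ persists in $D^{(k+1)}$ with horizontal distance $c$, so that $r_i$ sits strictly between $r_k^{D^{(k+1)}}$ and $h_k^{D^{(k+1)}}$ with the same horizontal distance, forcing $t_k\neq h_k$. You instead work entirely with the area vector $\raf$: the two structural facts you isolate---componentwise monotonicity of $\raf$ along the run, and that every entry in the decrement window of a move is at least the source value---let you trap the hypothetical move $M$ that first takes $(\raf)_p$ from $c$ to $c-1$ between the incompatible inequalities $w\le c$ and $w>c$. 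This is cleaner in that it avoids tracking the geometry of east-step placement and the question of whether a specific right-hand point persists unchanged through later moves; the price is that you must first establish the side fact that all sources on the way to $D^{(k+1)}$ lie at index $\geq k+1$, which the paper's approach uses only implicitly.

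One small point worth tightening: the strict inequality $(\raf_D)_{k+1},\dots,(\raf_D)_{p-1}>c$ you invoke in your third paragraph does not follow from the minimality of $p$ alone, which only gives $\geq c$. It does follow, but via the $=c$ case of your second paragraph: if some $i^\star\in(k,p)$ had $(\raf_D)_{i^\star}=c$, then the least $q>k$ with $(\raf_D)_q\le c$ would satisfy $q\le i^\star<p$, hence $(\raf_D)_q\ge c$ by minimality of $p$, hence $(\raf_D)_q=c$, which you already showed contradicts $t_k=h_k$. Making this link explicit removes any appearance of circularity in the sentence ``the value $c$ is never matched again.''
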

\begin{proof}
    We show by contradiction that $h_k^{D^{(k+1)}}$ must be on the top row.
    Recall that $h_k^{D^{(k+1)}}$ must have an east step after it by definition since the horizontal distance is non-zero, \ie $(\laf_\nu)_{k} > 0$.
    If $h_k^{D^{(k+1)}}$ is not on the top row, then at some point earlier in the algorithm an east step was added after $h_k^{D^{(k+1)}}$.
    Therefore, at some iteration (say going from $D$ to $D'$)  in the Dyck path algorithm there was $i$ and $j$ such that $t_i^D \neq h_i^D = h_j^D = t_j^D$ with the row of $r_j^D$ being the same row as $h_k^{D^{(k+1)}}$ and where $\horiz{t_i^D} = (\laf_\nu)_k$.
    Since $\horiz{t_i^D} = (\laf_\nu)_k$, therefore $h_k^{D^{(k+1)}} = t_i^D$.
    Furthermore, by construction of $D^{(k+1)}$ and since this iteration must have happened before acquiring $D^{(k+1)}$ then $i > k+1$.
But then $r_i^D$ is a point on the $\nu$-path $D^{(k+1)}$ and, in particular, $r_i^D = r_i^{D^{(k+1)}}$ such that $\horiz{r_{i}^{D^{(k+1)}}} = \horiz{r_{k}^{D^{(k+1)}}}$.
    Moreover, $i > k+1$ implies that $t_k^{D^{(k+1)}} \neq h_k^{D^{(k+1)}}$ as $r_i^{D^{(k+1)}}$ is a point with the same horizontal distance strictly between $r_k^{D^{(k+1)}}$ and $h_k^{D^{(k+1)}}$, a contradiction.
    Therefore $h_k^{D^{(k+1)}}$ is on the top row.
\end{proof}

For ease of notation let $(01)_i$ denote the vector $(0,\, \ldots,\, 0,\,1,\,\ldots,\,1)$ where the first $1$ occurs in the $i$-th entry.
The length of $(01)_i$ is given by the formula it is contained in, and if $i$ is greater than the length then $(01)_i$ is the all zero vector.
For example:
\begin{align*}
    (3,2,1) + (01)_2 &= (3,2,1) + (0,1,1) = (3,3,2)\\
    (2,4,8,1,1,3) + (01)_4 &= (2,4,8,1,1,3) + (0,0,0,1,1,1) = (2,4,8,2,2,4)\\
    (7,2,3,8,1) + (01)_9 &=  (7,2,3,8,1) + (0,0,0,0,0) = (7,2,3,8,1)
\end{align*}

\begin{lemma}
    \label{lem:many-stairs}
    Let $\nu$ be a path and let $D^{(k+1)}$ be the first $\nu$-Dyck path obtained from the area (or Dyck path) algorithm where the conditions of the algorithm are not satisfied for all indices weakly greater than $k+1$.
    Let $t$ be the row containing $t_k^{D^{(k+1)}}$ and let $z$ be the first row (weakly) after the $t$-th row whose right hand point has horizontal distance $0$.
    Then
    \begin{equation}
        \label{eq:go-down-algo}
        \laf_{D^{(k)}} = \laf_{D^{(k+1)}} + (01)_t - (01)_z
    \end{equation}
\end{lemma}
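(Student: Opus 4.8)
The plan is to follow the chain of $\nu$-Dyck paths the algorithm traverses between $D^{(k+1)}$ and $D^{(k)}$ and to record the total change in the left area vector, which I claim telescopes to $(01)_t-(01)_z$. Throughout I may assume that index $k$ genuinely satisfies the conditions of the algorithm at $D^{(k+1)}$ (I will call such an index a \emph{defect}), since otherwise $D^{(k)}=D^{(k+1)}$ and there is nothing to prove. Under this assumption \autoref{lem:same-algo} guarantees, via the Dyck path algorithm, that $t_k^{D^{(k+1)}}$ is a right hand point (it is followed by a north step, as $t_k^{D^{(k+1)}}\neq h_k^{D^{(k+1)}}$), so $t>k$; and since $\horiz{r_t^{D^{(k+1)}}}=\horiz{r_k^{D^{(k+1)}}}>0$, the row $t$ is not a row of horizontal distance $0$, whence $z>t$.

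First I would isolate the effect of a single step of the algorithm on the left area vector. The area algorithm replaces $\raf_D$ by $\raf_D$ minus the indicator of the rows from $j$ through $h-1$, where $j$ is the row of the touch point being processed and $h$ is the minimal row above $j$ whose right hand point has horizontal distance strictly below $\horiz{r_j^D}$. Since $\laf_D+\raf_D=\laf_\nu$ is constant in $D$, this is precisely
\[
    \laf_D \longmapsto \laf_D + (01)_{j} - (01)_{h}.
\]
Thus every down-move contributes a difference of two $(01)$-vectors, and the whole argument reduces to tracking the indices $j$ and $h$ along the cascade.

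Write the intermediate paths as $D^{(k+1)}=E_0,E_1,\dots,E_N=D^{(k)}$, where $E_{\ell+1}=\Tdown{E_\ell}{j_\ell}$ is obtained by processing the maximal defect $i_\ell$ of $E_\ell$, with $j_\ell$ the row of $t_{i_\ell}^{E_\ell}$ and $h_\ell$ the associated upper index. The crux — and the step I expect to be the main obstacle — is the \emph{chaining}
\[
    j_0 = t, \qquad h_\ell = j_{\ell+1}\ \text{ for } 0\le \ell< N-1, \qquad h_{N-1} = z.
\]
The equality $j_0=t$ is immediate from the definition of $t$. For $h_\ell=j_{\ell+1}$ I would argue locally on the right area vector: decrementing rows $j_\ell,\dots,h_\ell-1$ simultaneously turns $t_{i_\ell}^{E_\ell}$ into a hit point (resolving index $i_\ell$) and lowers the right hand point of row $j_\ell$ to horizontal distance $\horiz{r_{j_\ell}^{E_\ell}}-1$. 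Because every index above $i_\ell$ is already resolved in $E_\ell$, the rows strictly between $j_\ell$ and $h_\ell$ must have kept horizontal distance strictly above $\horiz{r_{j_\ell}^{E_\ell}}$, so after the decrement the first row matching the lowered value is $h_\ell$; hence the new defect has touch row $h_\ell$. This is exactly the ``propagation of an east step up through the staircase'' observed in the worked example, and \autoref{lem:ti-is-hi-on-top} together with \autoref{cor:east-only-same} is what pins the relevant hit points and edited rows to their claimed locations. The termination $h_{N-1}=z$ records that the propagation halts precisely when the upper index first reaches a row of horizontal distance $0$: there no east step remains to be moved, and every higher index already being resolved the algorithm stops, so by definition that row is $z$. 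Making the exact-match claim at $h_\ell$ fully rigorous across interleaved staircases is the delicate point I would spend the most effort on.

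Granting the chaining, the proof finishes by summation. Adding the single-move contributions and collapsing the interior terms gives
\[
    \laf_{D^{(k)}} - \laf_{D^{(k+1)}} = \sum_{\ell=0}^{N-1}\bigl((01)_{j_\ell}-(01)_{h_\ell}\bigr) = (01)_{j_0} - (01)_{h_{N-1}} = (01)_t - (01)_z,
\]
which is the identity \eqref{eq:go-down-algo} of the statement. As a consistency check, the single-chain path $\nu=E^aN^b$ treated in \autoref{lem:algo-stair} is exactly the case in which the cascade runs uninterrupted from $t$ up to the top row.
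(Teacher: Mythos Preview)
Your proposal follows essentially the same route as the paper: track the cascade of down-moves the algorithm performs between $D^{(k+1)}$ and $D^{(k)}$, note that each move contributes $(01)_{j_\ell}-(01)_{h_\ell}$ to the left area vector, and argue that the indices chain so the sum telescopes. The paper phrases this more narratively (``keep pulling east steps from higher rows'') rather than via your explicit identity $h_\ell=j_{\ell+1}$, but the content is the same, and both arguments invoke \autoref{cor:east-only-same} and \autoref{lem:ti-is-hi-on-top} at the analogous points.

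One genuine gap: your dismissal of the non-defect case as ``nothing to prove'' is not quite right. When $k$ fails the algorithm conditions you do have $D^{(k)}=D^{(k+1)}$, but equation~\eqref{eq:go-down-algo} then asserts $(01)_t=(01)_z$, i.e.\ $t=z$, and this requires verification. The paper handles it in two sub-cases: if $(\laf_\nu)_k=0$ then $\horiz{t_k^{D^{(k+1)}}}=0$ and so $z=t$ by definition of $z$; if instead $(\laf_\nu)_k>0$ but $t_k^{D^{(k+1)}}=h_k^{D^{(k+1)}}$, then \autoref{lem:ti-is-hi-on-top} forces this hit point to lie on the top row, whence $t=z=s_N+1$. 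You should add these checks.
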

\begin{proof}
    This is easily verified for $k = s_N$.
    Start with $D^{(k+1)}$.

    If $(\laf_\nu)_k = 0$, then the area algorithm conditions aren't satisfied at $k$.
    Therefore $D^{(k)} = D^{(k+1)}$.
    To show that this coincides with our equation, it suffices to show that $z = t$.
    But this is true since $(\laf_\nu)_k = 0$ implies $\horiz{t_k^{D^{(k+1)}}} = 0$ and therefore $z = t$.

    We now assume that $(\laf_\nu)_k > 0$.
    If $t_k^{D^{(k+1)}} = h_k^{D^{(k+1)}}$ then by \autoref{lem:ti-is-hi-on-top} $h_k^{D^{(k+1)}}$ is on the top row and thus $h = t = s_N+1$.
    Thus there is only one other point between $r_k^{D^{(k+1)}}$ and $(s_E, s_N)$ with the same value implying that $D^{(k)} = D^{(k+1)}$.
    We must therefore show (again) that $z = t$.
    Since $h_k^{D^{(k+1)}} = t_k^{D^{(k+1)}}$ is on the top row then $z$ must also be $s_N + 1$ as desired.

    If on the other hand $t_k^{D^{(k+1)}} \neq h_k^{D^{(k+1)}}$, by the Dyck path algorithm, we move the east step after $h_k^{D^{(k+1)}}$ to before $t_k^{D^{(k+1)}}$ to get the $\nu$-Dyck path $D^{(k+1)'}$ whose left area vector is equal to
    \[
        \laf_{D^{(n+1)'}} = \laf_{D^{(n+1)}} + (01)_t - (01)_h.
    \]
    If $h \neq s_N+1$ then we are pulling an east step from a row strictly between $t$ and $s_N + 1$.
    Since $(\laf_\nu)_{h} = (\laf_\nu)_{h-1}$ (by \autoref{cor:east-only-same}), then by pushing the $t$-th row over to the east by one step, we have a new set of three points which are strictly above $k$ that satisfy the conditions of the algorithm and we can bring an east step down from even further above.
    Notice also that this operation decreases the horizontal distance by $1$ each time.
    Therefore continuing in this way, we keep pulling east steps from higher rows until either we end up pulling a final east step from the top row, or our horizontal distance is reduced to $0$ to get $D^{(k)}$.
    We check each of these two cases separately.
    \begin{enumerate}
        \item If we end up pulling an east step down from the top row then every $r_j^{D^{(k+1)}}$ where $j \geq t$ gets shifted to the east by one step.
            Furthermore, the horizontal distance of every $r_j^{D^{(k+1)}}$ is strictly greater than $0$ implying $z = s_N + 1$.
            Then $\laf_{D^{(k)}} = \laf_{D^{(k+1)}} + (01)_t$ as desired.
        \item If we end up pulling $(\laf_\nu)_k$ east steps and thus get to a point where the horizontal distance is zero then we know this happens only if we pull an east step from a row whose right hand point has horizontal distance $0$.
            Let $z$ be this row.
            In this case, we cannot alter the path further above the row $z$.
            In other words, we are pushing east every right hand point between the $t$-th row and the $(z-1)$-th row east by one.
            Therefore,
            \[
                \laf_{D^{(k)}} = \laf_{D^{(k+1)}} + (01)_t - (01)_z
            \]
            as desired.
    \end{enumerate}
\end{proof}

We next extend the results of \autoref{lem:algo-stair} to when we have different entries in $\laf_\nu$ in order to have an in-degree version of \autoref{prop:out-iff-east}.

\begin{proposition}
    \label{prop:indeg-is-max-size}
    Let $\nu$ be a path from $(0,0)$ to $(s_E, s_N)$ and let $D$ be the path produced by the area algorithm.
    Then $\indeg(D)$ is equal to the size $\sigma_\nu$ of the maximal staircase shape $\nu$-Dyck path $\xi_\nu$.
\end{proposition}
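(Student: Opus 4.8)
The plan is to separate the statement into a clean reduction and a counting argument, and to carry out the count by following the area algorithm down the chain $D^{(s_N)}, D^{(s_N-1)}, \ldots, D^{(1)} = D$ while comparing it with the staircase algorithm on $\laf_\nu$.

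First I would reduce the claim to a statement about $\raf_D$. By \autoref{lem:down-cover}, $\indeg(D)$ is the number of indices $i$ with $t_i^D = h_i^D$ and $\horiz{r_i^D} \neq 0$. Since $D$ is the terminal path of the algorithm, no index is left satisfying the algorithm's conditions, so for every $i$ we have $t_i^D = h_i^D$ or $\horiz{r_i^D} = 0$. Hence every $i$ with $\horiz{r_i^D} \neq 0$ already satisfies $t_i^D = h_i^D$, and therefore
\[
    \indeg(D) = \order{\set{i}{\horiz{r_i^D} \neq 0}},
\]
which is exactly the number of nonzero entries of the right area vector $\raf_D = \laf_\nu - \laf_D$. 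It thus suffices to show that $\raf_D$ has precisely $\sigma_\nu$ nonzero entries.

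For the count I would argue by downward induction on $k$, using \autoref{lem:many-stairs} for the inductive step and \autoref{lem:algo-stair} for the base case. In the single-run case $\nu = E^a N^b$, \autoref{lem:algo-stair} gives $\raf_D$ explicitly, and one reads off exactly $\min(a,b) = \sigma_\nu$ nonzero entries, so the proposition holds there. In general, group the (weakly increasing) vector $\laf_\nu$ into its maximal constant runs. Passing from $D^{(k+1)}$ to $D^{(k)}$, the recursion $\laf_{D^{(k)}} = \laf_{D^{(k+1)}} + (01)_t - (01)_z$ decreases the entries of $\raf$ on rows $t, \ldots, z-1$ by one, where $t$ is the row of $t_k^{D^{(k+1)}}$ and $z$ is the first subsequent row already lying on $\nu$ (or $z = s_N+1$). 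Each such step can only decrease the number of nonzero entries of $\raf$, and by \autoref{cor:east-only-same} and \autoref{lem:ti-is-hi-on-top} the decrements, and in particular the entries that bottom out to $0$, occur only within a single run. I would track the nonzero count along the chain and show it terminates at $\sigma_\nu$ by putting the surviving entries in bijection with the greedy assignment $1, 2, \ldots, \sigma_\nu$ that the staircase algorithm carries out on $\laf_\nu$ (whose output has exactly $\sigma_\nu$ nonzero entries), using $\sigma_\nu = \maxoutnu$ from \autoref{lem:maxout-size} to anchor the comparison.

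The hard part will be the interaction between runs. When a run has a small value its staircase cannot be completed inside the run, so the algorithm borrows east steps from higher rows (the $z$-term and the two-case analysis of \autoref{lem:many-stairs}), and this borrowing is exactly where nonzero entries collapse to $0$. The crux is to prove that the number of these collapses is controlled so that the number of surviving nonzero entries equals $\sigma_\nu$. I expect to establish the cleanest form of the inductive hypothesis to be: after the top runs through row $k$ have been settled, the number of nonzero entries of $\raf_{D^{(k)}}$ lying strictly above row $k$ equals the size of the maximal staircase fitting weakly above the corresponding top portion of $\nu$. With that hypothesis in place, \autoref{cor:east-only-same} keeps the borrowing confined to runs, which is precisely what forces the Dyck path algorithm and the staircase algorithm to remain in lockstep and yields the final equality $\indeg(D) = \sigma_\nu$.
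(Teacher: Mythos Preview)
Your opening reduction is correct and is a genuine simplification: since the area algorithm terminates precisely when no $i$ has $t_i^D\neq h_i^D$ with $\horiz{r_i^D}\neq 0$, every nonzero entry of $\raf_D$ contributes to the in-degree, so $\indeg(D)$ is exactly the number of nonzero entries of $\raf_D$. The paper tracks essentially the same quantity via its counter $c_k$, but never states this reformulation so cleanly.

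Where your proposal falters is the counting step. The claim that, by \autoref{cor:east-only-same} and \autoref{lem:ti-is-hi-on-top}, ``the decrements, and in particular the entries that bottom out to $0$, occur only within a single run'' is not correct. In the paper's running example $\nu = EEENNENNN$ (so $\laf_\nu=(3,3,4,4,4)$), the step $D_5\to D_6$ has $i=1$, $j=2$, $h=4$ and decrements rows $2$ and $3$, which lie in different runs. All \autoref{cor:east-only-same} gives you is that the row $j$ where the east step is inserted satisfies $(\laf_\nu)_j=(\laf_\nu)_{j-1}$; it says nothing about the rows $j+1,\ldots,h-1$ above it. Because the decrement window routinely crosses run boundaries, an inductive hypothesis phrased purely in terms of ``the top portion of $\nu$'' and the staircase algorithm on $\laf_\nu$ does not have enough information to control where new zeros appear in $\raf_{D^{(k)}}$.

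The paper closes this gap with an auxiliary object rather than a run-by-run bookkeeping argument: alongside the chain $D^{(k)}$ it builds a second chain of paths $\xi_k$ by the simpler recursion $\laf_{\xi_k}=\laf_{\xi_{k+1}}+(01)_k-(01)_{z'}$ (with $\xi_{s_N+1}=N^{s_N}E^{s_E}$), shows $(\laf_{\xi_k})_i\geq(\laf_{D^{(k)}})_i$ for all $i\geq k$, and proves a four-way equivalence: $c_k=c_{k+1}+1$ iff $\raf_{D^{(k)}}$ has no new zero iff $z'=s_N+1$ iff $\xi_k$ gains a staircase step. Since $\xi_1$ rearranges to $\xi_\nu$, this yields $c_1=\sigma_\nu$. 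Your plan is in the same spirit, but to make it go through you would need a comparison object playing the role of $\xi_k$; the greedy staircase assignment on $\laf_\nu$ alone is too coarse to see the cross-run borrowing that \autoref{lem:many-stairs} encodes in the $z$-term.
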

\begin{proof}
    We show that $\indeg(D)$ is equal to $\sigma_\nu$ by constructing $\xi_\nu$ one step at a time while going down in the $\nu$-Tamari order.
    We do this using the $D^{(k)}$ from \autoref{lem:many-stairs}:
    \[
        \laf_{D^{(k)}} = \laf_{D^{(k+1)}} + (01)_t - (01)_z
    \]
    where $t$ is the row containing $t_k^{D^{(k+1)}}$ and $z$ is the first row weakly after the $t$-th row whose right hand point has horizontal distance $0$.
    We inductively proceed from $s_N +1 $ to $1$ using \autoref{lem:many-stairs} where for each $D^{(k)}$ we associate a $\nu$-Dyck path $\xi_k$ based on $\xi_{k+1}$.
    This induction is given by the formula:
    \[
        \laf_{\xi_k} = \begin{cases}
            \laf_{\xi_{k+1}} + (01)_{k} - (01)_{z'} & \text{if } (\laf_\nu)_k \neq 0\\
            \laf_{\xi_{k+1}} & \text{otherwise}
        \end{cases}
    \]
    where $z'$ is the first row above the $k$-th row whose right hand point has horizontal distance $0$ (in other words, we never get negative numbers).
    We let $\xi_{s_N + 1}$ be the $\nu$-Dyck path $N^{s_N}E^{s_E}$.
    Note that this implies that $\xi_{s_N + 1} = D^{(s_N)} = N^{s_N}E^{s_E}$ and that $\xi_{s_N}$ will be the staircase shape $\nu$-Dyck path of size $1$ since the first row will always contribute to the in-degree for $D^{(s_N)}$.
    Furthermore, this implies $(\laf_{\xi_{k}})_i \geq (\laf_{D^{(k)}})_i$ for all $i \geq k$, where the inequality is strict when $i = k$.

    To keep track of the in-degree, we let $c_k$ be the number of components $(\laf_{D^{(k)}})_i$ where $i \geq k$ such that $t_i^{D^{(k)}} = h_i^{D^{(k)}}$ and whose horizontal distance is non-zero.
    In other words, $c_k$ is the number of components $i \geq k$ which contribute to the in-degree.
    Note that $c_{s_{N} + 1} = 0$, $c_{s_N} = 1$ and $c_1 = \indeg(D)$.
    Therefore, it suffices to show that $c_1 = \sigma_\nu$.

    First, note that if $(\laf_\nu)_k = 0$ then $D^{(k)} = D^{(k+1)}$, $\xi_k = \xi_{k+1}$ and $c_k = c_{k+1}$.
    Next, notice that the formula for the left area vector of $\xi_k$ is almost identical to that of $D^{(k)}$ except that we push east everything starting from the $k$-th row up until the first $0$ (whose row we denoted by $z'$).
    In particular, if we take $\xi_1$ and for each $N^a$ with $a > 1$ we take all $N$ except one and move them to the beginning of the path, we get $\xi_\nu$.
    
    Therefore, to prove that $c_1 = \sigma_\nu$ it suffices to show that we increase $c_k$ by one whenever we increase the number of steps in $\xi_k$.
    We show that the following are equivalent
    \begin{enumerate}
        \item \label{enum:in:one}$c_k = c_{k+1} + 1$,
        \item \label{enum:in:two}$\laf_{D^{(k)}}$ does not have a new $0$ in a component as compared to $\laf_{D^{(k+1)}}$ 
        \item \label{enum:in:three}$z' = s_N + 1$,
        \item \label{enum:in:four}$\xi_k$ has one more step than $\xi_{k+1}$.
    \end{enumerate}

    \textbf{\autoref{enum:in:one} $\iff$ \autoref{enum:in:two}:}
    If $(\laf_\nu)_k = (\laf_\nu)_{k+1}$ then we know that $t = k+1$ since for each $i \geq k+1$ its associated $t$ is greater than $i$, meaning that we only ever added east steps in rows strictly higher than $k+1$.
    Therefore the row $k+1$ hasn't changed keeping the same horizontal distance.
    By construction, the $k$-th component now contributes to the in-degree.
    If $\laf_{D^{(k)}}$ does not have a new $0$ in a component as compared to $\laf_{D^{(k+1)}}$, we know $z = s_N + 1$ and therefore the area vector is just decreased by $1$ at every component after the $k$-th component.
    In other words, $i \geq k+1$ contributes to the in-degree of $D^{(k+1)}$ if and only if it does for $D^{(k)}$, \ie $c_k = c_{k+1} + 1$.
    If on the other hand $\laf_D^{(k)}$ does have a new $0$ in a component as compared to $\laf_{D^{(k+1)}}$, this component is no longer contributing to the in-degree, but all other components are.
    Therefore $c_k = c_{k+1}$.

    If $(\laf_\nu)_k \neq (\laf_\nu)_{k+1}$, then $t > k+1$. 
    Since all the points between $t_k^{D^{(k+1)}}$ and $r_k^{D^{(k+1)}}$ must have horizontal distance greater than one, and by a similar argument as in the $(\laf_\nu)_k = (\laf_\nu)_{k+1}$ case for all points after $t_k^{D^{(k+1)}}$, we have a similar result.
    In other words, if $\laf_{D^{(k)}}$ does not have a new $0$ in a component as compared to $\laf_{D^{(k+1)}}$, then $c_k = c_{k+1} + 1$, otherwise $c_k = c_{k+1}$.

    \textbf{\autoref{enum:in:two} $\iff$ \autoref{enum:in:three}}
    First note that $z'= s_N + 1$ implies $z = s_N + 1$.
    Furthermore, since $(\laf_{\xi_{k}})_i \geq (\laf_{D^{(k)}})_i$ for all $i \geq k$, we know that all components after the $k$-th component in $\laf_{D^{(k+1)}}$ are greater than $1$ (or have a $0$ between themselves and the $k$-th component).
    In other words, $\laf_{D^{(k)}}$ does not have a new $0$ in a component as compared to $\laf_{D^{(k+1)}}$.
    Using the same fact (that $(\laf_{\xi_{k}})_i \geq (\laf_{D^{(k)}})_i$ for all $i \geq k$), the converse is true as well.

    \textbf{\autoref{enum:in:three} $\iff$ \autoref{enum:in:four}}
    To finish the proof it suffices to show that $z' = s_N + 1$ if and only if $\xi_k$ has one more step than $\xi_{k+1}$.
    If $z' = s_N + 1$ then we are pulling an east step from the top row to the $k$-th row in $\xi_k$ therefore increasing the number of steps by one.
    Conversely, if we increased the number of steps by one, then we must have pulled from a row with multiple east steps.
    Since we only ever add one east step to any row, this means this east step must have come from the top row and therefore $z' = s_N + 1$.
    
\end{proof}

\begin{corollary}
    \label{cor:indeg_is_N-1}
    Let $D$ be an $m$-Dyck path of height $n$.
    Then $\indeg(D) = n-1$ if and only if $t_i = h_i$ and $\horiz{r_i} \neq 0$ for all $1 < i \leq n$.
\end{corollary}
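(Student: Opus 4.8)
The plan is to rewrite $\indeg(D)$ as a count over the rows of $D$ and then show that the bottom row can never contribute. Since $D$ is an $m$-Dyck path of height $n$ we have $s_N = n$, so the relevant indices range over $[n]$. By the definition of $\indeg(D)$ as the number of elements covered by $D$, together with \autoref{lem:down-cover}, an index $i$ yields a down-cover $\Tdown{D}{i}$ precisely when $t_i = h_i$ and $\horiz{r_i} \neq 0$. Hence
\[
    \indeg(D) = \order{\set{i \in [n]}{t_i = h_i \text{ and } \horiz{r_i} \neq 0}}.
\]
This reduces the whole statement to understanding which rows can satisfy these two conditions, and in particular bounding the size of this set.

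The crux is to show that $i = 1$ never contributes, which is where I would use the identity $\laf_D^\nu(i) + \raf_D^\nu(i) = \laf_\nu^\nu(i)$ recorded just after the definition of the area functions. For $\nu = (NE^m)^n$ the left area vector of $\nu$ is $\laf_\nu = (0, m, 2m, \ldots, (n-1)m)$, so $\laf_\nu^\nu(1) = 0$. Since $\laf_D^\nu(1)$ and $\raf_D^\nu(1)$ are both nonnegative and sum to $0$, both must vanish, and therefore $\horiz{r_1} = \raf_D^\nu(1) = 0$. (Equivalently, $D$ must begin with a north step because $\nu$ does, forcing $r_1 = (0,0)$, which has horizontal distance $0$.) Thus the condition $\horiz{r_1} \neq 0$ fails for $i = 1$, the index $1$ is excluded from the count above, and consequently $\indeg(D) \leq n-1$.

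With this in place the equivalence is immediate. If $t_i = h_i$ and $\horiz{r_i} \neq 0$ for every $i$ with $1 < i \leq n$, then each of these $n-1$ rows contributes a down-cover while $i = 1$ does not, so $\indeg(D) = n-1$. Conversely, if $\indeg(D) = n-1$, then since the count effectively ranges over $\{2, \ldots, n\}$, a set of exactly $n-1$ elements, every such index must satisfy both conditions. I expect no genuine obstacle here: the only point needing care is the verification that $\horiz{r_1} = 0$, which is what pins the maximal in-degree at $n-1$ and isolates the one excluded row; everything else is bookkeeping on top of \autoref{lem:down-cover}. It is worth noting that this also recovers $\maxinnm = n-1$, in agreement with \autoref{prop:indeg-is-max-size} and the fact that $\sigma_\nu = n-1$ for $m$-Dyck paths of height $n$.
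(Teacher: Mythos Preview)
Your proof is correct and is the natural direct argument: express $\indeg(D)$ via \autoref{lem:down-cover} as the number of rows with $t_i=h_i$ and $\horiz{r_i}\neq 0$, observe that $i=1$ is always excluded because $\nu=(NE^m)^n$ forces $r_1=(0,0)$ and hence $\horiz{r_1}=0$, and conclude. The paper states this result without proof as a corollary of the preceding development; your route is essentially what is implicit there, and arguably more self-contained since it does not actually need \autoref{prop:indeg-is-max-size}.

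One small quibble: your closing remark that the argument ``recovers $\maxinnm = n-1$'' overstates things slightly. You have shown $\indeg(D)\le n-1$ for every $D$, hence $\maxinnm\le n-1$; equality still requires exhibiting some $D$ attaining $n-1$, which is what \autoref{prop:indeg-is-max-size} (together with $\sigma_\nu=n-1$) supplies. This does not affect the proof of the corollary itself, which is purely a characterization and does not assert that $n-1$ is achieved.
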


\begin{corollary}
    Let $\nu$ be a path.
    Then $\maxinnu = \maxoutnu$.
\end{corollary}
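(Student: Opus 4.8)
The plan is to show that both quantities equal $\sigma_\nu$, the size of the maximal staircase shape $\nu$-Dyck path. One inequality is immediate from the results already in hand: by \autoref{lem:maxout-size} we have $\maxoutnu = \sigma_\nu$, and by \autoref{prop:indeg-is-max-size} the $\nu$-Dyck path produced by the area algorithm has in-degree exactly $\sigma_\nu$, so $\maxinnu \ge \sigma_\nu = \maxoutnu$. The genuine content of the corollary is therefore the reverse inequality $\maxinnu \le \sigma_\nu$, namely that no $\nu$-Dyck path has more down-covers than the maximal staircase has steps.

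To prove $\maxinnu \le \sigma_\nu$ I would mirror the collapsing argument of \autoref{lem:maxout-size}. Fix a $\nu$-Dyck path $D$ and recall from \autoref{lem:down-cover} that $\indeg(D) = \order{S}$ where $S = \set{i}{t_i^D = h_i^D \text{ and } \horiz{r_i^D} \neq 0}$. The first step is a structural lemma: the hit points $\set{h_i^D}{i \in S}$ are pairwise distinct. This rests on the observation that between $r_i^D$ and $t_i^D$ every intermediate point has horizontal distance strictly larger than $\horiz{r_i^D}$ (an east step changes the horizontal distance by exactly $1$, so the path cannot cross the value $\horiz{r_i^D}$ without a first return). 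This forces the segments $[r_i^D, h_i^D]$ for $i \in S$ to form a laminar family, and hence the points $h_i^D$ to be distinct: two nested segments have different horizontal distances at their upper ends, while two disjoint segments terminate at distinct positions along $D$.

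With these $\order{S}$ distinct corners in hand, the second step collapses $D$ to a staircase shape exactly as in \autoref{lem:maxout-size}, pushing superfluous east steps to the end and superfluous north steps to the beginning until one step remains in each row and column. The key point, and the main obstacle, is to verify that each index $i \in S$ survives the collapse as a genuine corner of the resulting staircase, so that the staircase produced has size at least $\order{S} = \indeg(D)$; since $\sigma_\nu$ is by definition the \emph{maximal} staircase size, this gives $\indeg(D) \le \sigma_\nu$, and maximising over $D$ yields $\maxinnu \le \sigma_\nu$. I expect checking that the collapse neither merges two touch-equals-hit corners nor destroys one to be the delicate part. An alternative that sidesteps the collapse would be to reflect $D$ through the anti-diagonal to an element of the $\nu^{*}$-Tamari lattice (with $\nu^{*}$ the reverse-complement word of $\nu$) and argue that this reflection is an anti-isomorphism exchanging in-degree and out-degree, whence $\maxinnu = \maxoutnu[{\nu^*}] = \sigma_{\nu^*} = \sigma_\nu$; but establishing that anti-isomorphism is a separate result not available in the text, so I would prefer the direct staircase argument above.
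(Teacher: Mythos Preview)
The paper's own proof is one sentence: it simply cites \autoref{lem:maxout-size} and \autoref{prop:indeg-is-max-size} as giving the result directly. You are right that, read literally, those two statements only yield $\maxoutnu = \sigma_\nu$ and $\maxinnu \ge \sigma_\nu$; the proposition computes $\indeg(D)$ for the one path produced by the algorithm, and the surrounding text asserts (``this procedure does produce a $\nu$-Dyck path with maximal in-degree'') rather than proves that this is the maximum. So the ``genuine content'' you isolate --- the bound $\maxinnu \le \sigma_\nu$ --- is indeed not made explicit in the paper, and your proposal is an attempt to supply what the paper leaves implicit rather than a different proof of the same thing.

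That said, your proposed fix has a real gap of its own, precisely at the point you flag as ``delicate.'' The collapse in \autoref{lem:maxout-size} works because an $EN$ valley at $r_i$ is a purely local feature, preserved when extra $E$'s are pushed right and extra $N$'s pushed left. The in-degree condition $t_i^D=h_i^D$ with $\horiz{r_i^D}\neq 0$ is not local: the hit point $h_i^D$ need not be preceded by a north step at all (take $\nu = ENEE$ and $D = NEEE$, where $h_1^D=(2,1)$ is entered by an east step), so the $h_i^D$ are not $NE$ corners and your collapse can merge or destroy them. Your laminar-family observation is correct and does show the $h_i^D$ for $i\in S$ are pairwise distinct, but distinct points followed by east steps do not yet give a staircase weakly above~$\nu$; one still needs to manufacture $\lvert S\rvert$ rows whose $\laf_\nu$-entries dominate $1,2,\ldots,\lvert S\rvert$, and that step is missing. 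The anti-isomorphism route you mention would close the gap cleanly, but as you note it is not available in the paper either.
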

\begin{proof}
    This is a direct consequence of \autoref{lem:maxout-size} and \autoref{prop:indeg-is-max-size}
\end{proof}

\section{Isomorphisms}
\label{sec:isomorphisms}
We now analyse the subposets $\Toutnu$ and $\Tinnu$ using ``smaller'' $\nu$-Dyck paths.
We begin by studying $m$-Dyck paths of height $n$ and discuss arbitrary $\nu$ later.
Recall that the $m$-Dyck paths of height $n$ are the $\nu$-Dyck paths where $\nu = (NE^m)^n$ for some $m \geq 1$ and $n \geq 1$.

\subsection{The out-degree poset for \texorpdfstring{$m$}{m}-Dyck paths of height \texorpdfstring{$n$}{n}}
\label{ss:out-degree-poset}
In this section we study the subposet $\Toutnu$ which contains only $\nu$-Dyck paths with maximal out-degree.
By \autoref{cor:outdeg-n} for $m$-Dyck paths of height $n$, this happens when there are exactly $n-1$ covering relations.

Let $\nu$ be a path and recall that $\xi_\nu$ is the associated maximal staircase shape $\nu$-Dyck path.
Recall further that addition of two partitions is defined to be component-wise addition.
Given a $\nu$-Dyck path $D$, we define a new $\nu$-Dyck path using $\xi_\nu$.
Let $D^-$ be the $\nu$-Dyck path whose partition is given by $\laf_{D^-} = \laf_D - \laf_{\xi_\nu}$.

\begin{example}
    As an example, let $\nu$ be the path in red and $\xi_\nu$ to be the maximal staircase shape $\nu$-Dyck path in black.

    ~
    \begin{center}
        \begin{tikzpicture}[scale=0.7]
            \draw[dotted] (0, 0) grid (8, 3);
            \draw[rounded corners=1, color=BrightRed, ultra thick] (0, 0) -- (4, 0) -- (4, 1) -- (6, 1) -- (6, 2) -- (7,2) -- (7,3) -- (8,3);
            \draw[rounded corners=1, color=Black, ultra thick] (0, 0) -- (1, 0) -- (1, 1) -- (2, 1) -- (2, 2) -- (3,2) -- (3,3) -- (8,3);
        \end{tikzpicture}
    \end{center}
    ~

    Then $\laf_\nu = (4, 6, 7)$ and $\laf_{\xi_\nu} = (1, 2, 3)$.
    Therefore,
    \[
        \laf_\nu - \laf_{\xi_\nu} = (4, 6, 7) - (1, 2, 3) = (3,4,4) = \laf_{\nu^-}
    \]
\end{example}

Before proving our main theorem, we show that going up in the $\nu$-Tamari order weakly decreases the out-degree.
\begin{lemma}
    \label{lem:outdeg-decreases}
    Suppose $\nu$ is a path such that $\laf_\nu$ has no repeating entries.
    Let $D$ and $D'$ be $\nu$-Dyck paths such that $D \leq_T D'$.
    Then $\outdeg(D) \geq \outdeg(D')$.
\end{lemma}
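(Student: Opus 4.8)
The plan is to reduce to a single covering step and analyse its effect on the left area vector $\laf_D$. Since $D\leq_T D'$, fix a saturated chain $D=D_0\cover_T D_1\cover_T\cdots\cover_T D_k=D'$; it then suffices to prove $\outdeg(D_j)\geq\outdeg(D_{j+1})$ whenever $D_{j+1}=\Tup{D_j}{i}$ and chain these inequalities together. So I assume throughout that $D'=\Tup{D}{i}$ for a single admissible index $i$.

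First I would restate the out-degree through $\laf_D$. Recall (as in the proof of \autoref{lem:maxout-size}) that $\outdeg(D)$ counts the right hand points preceded by an east step. Setting $\laf_D(0)=0$, the point $r_j^D$ is preceded by an east step exactly when $\laf_D(j)>\laf_D(j-1)$; hence $\outdeg(D)$ equals the number of strict ascents of the weakly increasing sequence $0=\laf_D(0)\leq\laf_D(1)\leq\cdots\leq\laf_D(s_N)$. Next I would record how $\Tup{D}{i}$ acts on this vector. Let $\ell$ be the row of $t_i^D$. Moving the east step from in front of $r_i^D$ to just after $t_i^D$ shifts the right hand points of rows $i,i+1,\dots,\ell$ one unit west and fixes every other row; when $t_i^D$ is itself a right hand point, the inserted east step restores the row above it, so row $\ell+1$ is unchanged as well. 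Thus $\laf_{D'}(j)=\laf_D(j)-1$ for $i\leq j\leq\ell$ and $\laf_{D'}(j)=\laf_D(j)$ otherwise.

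Consequently the consecutive differences of $\laf_{D'}$ and $\laf_D$ agree for every $j\notin\{i,\ell+1\}$, so the ascent count can change only at positions $i$ and $\ell+1$. At position $i$ we started from an ascent (since $\Tup{D}{i}$ exists, $\laf_D(i)>\laf_D(i-1)$), and lowering $\laf_D(i)$ by one changes the contribution by $0$ or $-1$. At position $\ell+1$ (when $\ell<s_N$) we have $\laf_{D'}(\ell)=\laf_D(\ell)-1<\laf_D(\ell)\leq\laf_D(\ell+1)=\laf_{D'}(\ell+1)$, so $\ell+1$ is always an ascent in $D'$; it therefore contributes $+1$ precisely when it was \emph{not} an ascent in $D$, i.e.\ when $\laf_D(\ell+1)=\laf_D(\ell)$.

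The crux, and where the main difficulty lies, is to rule out this bad configuration $\laf_D(\ell+1)=\laf_D(\ell)$ with $\ell<s_N$; this is the only place the hypothesis is used. Suppose it holds and write $c=\horiz{r_i^D}=\horiz{t_i^D}$. Since there is then no east step on row $\ell$, the only point of $D$ on that row is $r_{\ell+1}^D$, so $t_i^D=r_{\ell+1}^D$ and $c=\laf_\nu(\ell+1)-\laf_D(\ell)$. Now consider $r_\ell^D$, which lies on $D$ weakly between $r_i^D$ and $t_i^D$ (it equals $r_i^D$ when $\ell=i$ and lies strictly between otherwise). Because the horizontal distance jumps up immediately after $r_i^D$ and can only decrease by one at a time along east steps, every point strictly between $r_i^D$ and $t_i^D$ has horizontal distance exceeding $c$; hence $\horiz{r_\ell^D}\geq c$ in either case, that is $\laf_\nu(\ell)-\laf_D(\ell)\geq\laf_\nu(\ell+1)-\laf_D(\ell)$, giving $\laf_\nu(\ell)\geq\laf_\nu(\ell+1)$. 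As $\laf_\nu$ is weakly increasing this forces $\laf_\nu(\ell)=\laf_\nu(\ell+1)$, contradicting that $\laf_\nu$ has no repeated entries. Therefore position $\ell+1$ always contributes $0$, the net change equals the $\leq 0$ contribution at position $i$ (and when $\ell=s_N$ there is no position $\ell+1$ at all), so $\outdeg(D')\leq\outdeg(D)$, completing the covering step and hence the lemma.
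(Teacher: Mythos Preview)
Your proof is correct and follows essentially the same approach as the paper: reduce to a single covering step $D\cover_T D'=\Tup{D}{i}$, and use the hypothesis that $\laf_\nu$ has no repeated entries to show that a right hand point with no preceding east step cannot acquire one. The paper phrases this as ``$r_i^D$ cannot be a touch point $t_j^D$'' (because the point $p$ just below it would have strictly smaller horizontal distance, while points strictly between $r_j^D$ and $t_j^D$ must have larger horizontal distance), whereas you recast the same computation through ascents of $\laf_D$ and track exactly the two positions $i$ and $\ell+1$ where the ascent count can change; your contradiction $\laf_\nu(\ell)=\laf_\nu(\ell+1)$ is the same obstruction the paper finds.
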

\begin{proof}
    Recall that we can go up in the $\nu$-Tamari order if and only if $r_i$ is proceeded by an east step.
    Therefore, it suffices to show that once $r_i^D$ no longer has east steps proceeding it, another east step cannot be put in front of it.
    Suppose $r_i^D$ has no east step proceeding it.
    If $i = 1$ then there is no way to put an east step before it.
    Otherwise, $i > 1$ and $r_i^D$ is proceeded by a north step.
    Since $\laf_\nu$ has no repeated entries, this implies $\horiz{r_i^D} > \horiz{p}$ where $p$ is the point directly before $r_i^D$ (on the other side of the north step).
    But this implies that it is impossible for $r_i^D$ to be the touch point for any $j < i$ .
    Therefore no east step can be placed before $r_i^D$.
    As each cover relation weakly decreases the out-degree, we have our lemma.
\end{proof}

\begin{theorem}
    The following is a ($\nu$-Tamari) order preserving bijection:
    \[
        \varphi: \Doutnm \to \mcD_{n, m-1}
    \]
    where $\laf_{\varphi(D)} = \laf_{D^-}$.
\end{theorem}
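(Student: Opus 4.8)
The plan is to carry out everything in terms of right area vectors, exploiting the fact that $\varphi$ preserves them. Since $\nu=(NE^m)^n$ has $\laf_\nu=(0,m,2m,\ldots,(n-1)m)$ and $\laf_{\xi_\nu}=(0,1,\ldots,n-1)$, while $\nu'\mathrel{:=}(NE^{m-1})^n$ has $\laf_{\nu'}=\laf_\nu-\laf_{\xi_\nu}$, the defining relation $\laf_{\varphi(D)}=\laf_D-\laf_{\xi_\nu}$ gives $(\raf_{\varphi(D)})_i=(\laf_{\nu'})_i-(\laf_{\varphi(D)})_i=(\laf_\nu)_i-(\laf_D)_i=(\raf_D)_i$ for every $i$. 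Thus $\varphi$ is the identity on right area vectors, and $D$ and $\varphi(D)$ share all horizontal distances at their right hand points. (The substantive case is $m\geq 2$; for $m=1$ both sides are singletons and the statement is trivial.)

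First I would settle well-definedness and bijectivity. By \autoref{cor:outdeg-n}, $D\in\Doutnm$ iff $\laf_D$ has $n$ distinct entries, \ie $\laf_D$ is strictly increasing from $(\laf_D)_1=0$. Translating to right area vectors, both $\Doutnm$ and $\mcD_{n,m-1}$ are described by \emph{the same} set of vectors $\mathbf r$ with $\mathbf r_1=0$, $\mathbf r_i\geq 0$, and $\mathbf r_{i+1}-\mathbf r_i\leq m-1$: on the $m$-side these encode $D$ weakly above $\nu$ together with strict monotonicity of $\laf_D$, and on the $(m-1)$-side they encode $\varphi(D)$ weakly above $\nu'$ together with weak monotonicity of $\laf_{\varphi(D)}$ (the bound $\mathbf r_i\leq (i-1)(m-1)$ being implied by telescoping). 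As $\varphi$ is the identity on these vectors it is a bijection; alternatively one checks directly that $\laf_D-\laf_{\xi_\nu}$ lands in $\mcD_{n,m-1}$ and that $\laf_{D'}+\laf_{\xi_\nu}$ inverts it.

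The core is a cover correspondence, and this is the main obstacle. I would first record the effect of an up-move on the right area vector: if $\Tup{D}{i}$ is defined and $J=\min\set{j>i}{(\raf_D)_j\leq(\raf_D)_i}$ (with $J=s_N+1$ if no such $j$), then the move shifts the right hand points $r_i,\ldots,r_{J-1}$ west by one, so $\raf_{\Tup{D}{i}}$ is obtained from $\raf_D$ by adding $1$ to the entries in rows $i,\ldots,J-1$. I would prove this by tracing horizontal distances: each north step raises the distance (by $m$, respectively $m-1$) and each east step lowers it by one, so by \autoref{rem:touch_to_hit} the touch point $t_i$ is the first return to $(\raf_D)_i$, which occurs during the descent in the segment $[r_{J-1},r_J]$. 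The crucial observation is that $J$ depends only on $\raf_D$ and \emph{not} on the step-height: the height enters only to guarantee that the north step strictly raises the distance and that the descent in $[r_{J-1},r_J]$ reaches $(\raf_D)_i$, both of which hold for $m$ and for $m-1$. Since $\varphi$ fixes $\raf$, the same rows are shifted on both sides, whence $\varphi(\Tup{D}{i})=\Tup{\varphi(D)}{i}$. A short computation matches the availability of the move: $\Tup{D}{i}\in\Doutnm$ iff $(\laf_D)_i-(\laf_D)_{i-1}\geq 2$, which is exactly $(\laf_{\varphi(D)})_i>(\laf_{\varphi(D)})_{i-1}$, \ie the condition that $\Tup{\varphi(D)}{i}$ be defined.

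Finally I would promote this to an isomorphism of posets. By \autoref{lem:outdeg-decreases} (applicable since $\laf_\nu$ has no repeated entries) the out-degree is weakly order-reversing, so $\Doutnm$ is a down-set of $\mbbT_\nu$; hence for $D\leq_T E$ in $\Doutnm$ every element of a saturated chain between them lies below $E\in\Doutnm$ and so belongs to $\Doutnm$. Consequently the cover relations of the subposet $\Toutnm$ are precisely the covers $D\cover_T\Tup{D}{i}$ of $\mbbT_\nu$ with $\Tup{D}{i}\in\Doutnm$. Combining this with the previous paragraph, $\varphi$ is a bijection that carries covers to covers and reflects them, hence a poset isomorphism, and in particular order preserving. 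The whole argument hinges on the horizontal-distance bookkeeping that isolates $J$ as a function of $\raf_D$ alone; once that $m$-independence is established, the matching of both the edge set and the definedness conditions falls out immediately.
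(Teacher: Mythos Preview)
Your proof is correct and follows essentially the same approach as the paper: both hinge on the observation that $\varphi$ preserves right area vectors (equivalently, horizontal distances at the right hand points), so that touch points lie in the same rows before and after applying $\varphi$, and both invoke \autoref{lem:outdeg-decreases} to reduce the order statement to cover relations. Your version is somewhat more explicit---you isolate the row $J$ of the touch point purely as a function of $\raf_D$ and you spell out that $\Doutnm$ is a down-set so that covers in the subposet coincide with ambient covers---but the underlying argument is the same.
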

\begin{proof}
    Let $\xi$ be the maximal staircase shape $m$-Dyck path of height $n$.
    We start by showing we have a bijection between sets.
    By \autoref{prop:out-iff-east} and \autoref{cor:outdeg-n} we know that $D \in \Doutnm$ if and only if for every $i > 1$ then $r_i$ is proceeded by an east step.
    In other words, if and only if $\laf_{D} - \laf_\xi$ has all non-negative entries and is a partition.
    But this is true if and only if $\laf_D - \laf_\xi$ is a partition for a $m-1$-Dyck path of height $n$.
    Therefore, $\laf_{\varphi(D)} = \laf_{D^-}$ and $\varphi(D)$ is the $(m-1)$-Dyck path of height $n$ obtained from $D^-$ by removing the final $n$ east steps.
    
    We next show that this bijection is order preserving of the $\nu$-Tamari order.
    Note that in the $m = 1$ case, this is trivially true as there is only one element.
    Let $D$ and $D'$ be elements in $\Doutnm$ such that $D \leq_T D'$ and suppose $m \geq 2$.
    By \autoref{lem:outdeg-decreases}, we can further assume that $D \cover_T D'$.
    In particular we can assume that $D' = \Tup{D}{i}$ for some $i$ such that $D = dEtf$ and $\Tup{D}{i} = dtEf$ where $d = D_{[(0,0), r_i^D]}$ (with final $E$ removed), $t = D_{[r_i^D, t_i^D]}$ and $f = D_{[t_i^D, (nm, n)]}$.
    Note that applying $\varphi$ to $D$ and $D'$ pushes the $j$-th row east by $j-1$.
    Then the horizontal distance doesn't change under the $\varphi$ map, \ie $\horiz[n,m]{r_j^D} = \horiz[n,m-1]{r_j^{\varphi(D)}}$.
    Therefore $t_i^D$ and $t_i^{\varphi(D)}$ are on the same row as every $r_j^D$ keeps its horizontal distance.
    Furthermore, noting that since $\Tup{D}{i} = D'$ is in $\Doutnm$, this implies the $i$-th row of $\Tup{D}{i}$ has $i-1$ east steps that come before it implying $D$ has $i$ east steps which come before the $i$-th row.
    Therefore $\varphi(D)$ has an east step before $r_i^{\varphi(D)}$.
    Thus we can also go up in the $\nu$-Tamari order in the exact same way in $\varphi(D)$.
    Putting all this together, we have $D \cover_T \Tup{D}{i}$ implies $\varphi(D) \cover_T \Tup{\varphi(D)}{i} = \varphi(\Tup{D}{i})$ as desired.

    Conversely, we do the same procedure backwards.
    In other words, we view $D \cover_T D'$ in $\mcD_{n, m-1}$.
    Then the reverse map $\varphi^{-1}$ where we add an east step in each row keeps horizontal distances of all right hand points and touch points.
    Therefore $\varphi^{-1}(D) \cover_T \varphi^{-1}(D')$.
    Furthermore, since we added an east step in each row, both of these elements live in $\Doutnm$ as desired.
\end{proof}

\subsection{The in-degree poset for \texorpdfstring{$m$}{m}-Dyck paths of height \texorpdfstring{$n$}{n}}
\label{ss:in-degree-poset}
In this section we study the subposet $\Tinnu$ which contains only $\nu$-Dyck paths with maximal in-degree.
For $m$-Dyck paths of height $n$, by \autoref{cor:indeg_is_N-1} this happens exclusively when there are exactly $n-1$ covers.
As in the previous section, we begin by defining a new $\nu$-Dyck path out of the old one.

We say that a point $(x,y)$ \defn{comes before (after)} a point $(a,b)$ if $x < a$ ($x > a$).
Given a $\nu$-Dyck path $D$, we define a new $\nu$-Dyck path $\widehat{D}$ by its partition $\laf_{\widehat{D}}$ component-wise:
\[
    (\laf_{\widehat{D}})_i = (\laf_D)_i - \abs{\set{j \in [y]}{h_j^D \text{ comes before } r_i^D}}
\]
Note that since we are decreasing the partition, the path $\widehat{D}$ is also a $\nu$-Dyck path.

\begin{example}
    As an example, suppose we have the following $\nu$-Dyck path $D$:

    ~
    \begin{center}
        \begin{tikzpicture}[scale=0.7]
            \draw[dotted] (0, 0) grid (8, 4);
            \draw[rounded corners=1, color=BrightRed, ultra thick] (0, 0) -- (4, 0) -- (4, 1) -- (6, 1) -- (6, 2) -- (7,2) -- (7,3) -- (8,3) -- (8, 4);
            \draw[rounded corners=1, color=Black, ultra thick] (0,0) -- (1,0) -- (1,1) -- (4,1) -- (4,2) -- (6,2) -- (6,3) -- (7,3) -- (7,4) -- (8,4);
        \end{tikzpicture}
    \end{center}
    For this example, the associated left area vector is given by $\laf_D = (1, 4, 6, 7)$.
    To calculate $\laf_{\widehat{D}}$, we need to find the hit points and the right hand points.
    \begin{center}
        \begin{tikzpicture}[scale=0.8]
            \draw[dotted] (0, 0) grid (8, 4);
            \draw[rounded corners=1, color=BrightRed, ultra thick] (0, 0) -- (4, 0) -- (4, 1) -- (6, 1) -- (6, 2) -- (7,2) -- (7,3) -- (8,3) -- (8,4);
            \draw[rounded corners=1, color=Black, ultra thick] (0,0) -- (1,0) -- (1,1) -- (4,1) -- (4,2) -- (6,2) -- (6,3) -- (7,3) -- (7,4) -- (8,4);
            \draw[color=ProcessCyan, line width=1, dotted] (1,0) -- (3,1);
            \draw[color=ProcessCyan, line width=1, dotted] (4,1) -- (5,2);
            \draw[color=ProcessCyan, line width=1, dotted] (6,2) to[out=0, in=330] (7,4);
            \draw[color=ProcessCyan, line width=1, dotted] (7,3) to[out=0, in=330] (7,4);
            \fill[color=ProcessCyan] (3,1) circle[radius=2pt] node[below] {$h_1$};
            \fill[color=ProcessCyan] (5,2) circle[radius=2pt] node[below] {$h_2$};
            \fill[color=ProcessCyan] (7,4) circle[radius=2pt] node[below right] {$h_3 = h_4$};
            \fill (1,0) circle[radius=2pt] node[below right] {$r_1$};
            \fill (4,1) circle[radius=2pt] node[below right] {$r_2$};
            \fill (6,2) circle[radius=2pt] node[below right] {$r_3$};
            \fill (7,3) circle[radius=2pt] node[below right] {$r_4$};
            \fill (0,0) circle[radius=2pt] node[above left] {$4$};
            \fill (1,0) circle[radius=2pt] node[above left] {$3$};
            \fill (1,1) circle[radius=2pt] node[above left] {$5$};
            \fill (2,1) circle[radius=2pt] node[above left] {$4$};
            \fill (3,1) circle[radius=2pt] node[above left] {$3$};
            \fill (4,1) circle[radius=2pt] node[above left] {$2$};
            \fill (4,2) circle[radius=2pt] node[above left] {$3$};
            \fill (5,2) circle[radius=2pt] node[above left] {$2$};
            \fill (6,2) circle[radius=2pt] node[above left] {$1$};
            \fill (6,3) circle[radius=2pt] node[above left] {$2$};
            \fill (7,3) circle[radius=2pt] node[above left] {$1$};
            \fill (7,4) circle[radius=2pt] node[above left] {$1$};
            \fill (8,4) circle[radius=2pt] node[above left] {$0$};
        \end{tikzpicture}
    \end{center}
    Then we have
    \begin{align*}
        \left( \laf_{\widehat{D}} \right)_1 &= \left( \laf_D \right)_1 - \abs{ \set{j \in \left[ 4 \right]}{h_j^D \text{ comes before } r_1^D}} = 1 - 0 = 1\\
        \left( \laf_{\widehat{D}} \right)_2 &= \left( \laf_D \right)_2 - \abs{ \set{j \in \left[ 4 \right]}{h_j^D \text{ comes before } r_2^D}} = 4 - 1 = 3\\
        \left( \laf_{\widehat{D}} \right)_3 &= \left( \laf_D \right)_3 - \abs{ \set{j \in \left[ 4 \right]}{h_j^D \text{ comes before } r_3^D}} = 6 - 2 = 4\\
        \left( \laf_{\widehat{D}} \right)_4 &= \left( \laf_D \right)_4 - \abs{ \set{j \in \left[ 4 \right]}{h_j^D \text{ comes before } r_4^D}} = 7 - 2 = 5\\
    \end{align*}
    In other words, $\laf_{\widehat{D}} = (1, 3, 4, 5)$ giving us the following $\nu$-Dyck path.
    \begin{center}
        \begin{tikzpicture}[scale=0.7]
            \draw[dotted] (0, 0) grid (8, 4);
            \draw[rounded corners=1, color=BrightRed, ultra thick] (0, 0) -- (4, 0) -- (4, 1) -- (6, 1) -- (6, 2) -- (7,2) -- (7,3) -- (8,3) -- (8, 4);
            \draw[rounded corners=1, color=Black, ultra thick] (0,0) -- (1,0) -- (1,1) -- (3,1) -- (3,2) -- (4,2) -- (4,3) -- (5,3) -- (5,4) -- (8,4);
        \end{tikzpicture}
    \end{center}
\end{example}

Restricting our attention to $m$-Dyck paths of height $n$, it turns out the number of elements in the subposet $\Tinnm$ is equal to the $(m-1)$-Dyck paths of height $n$.
Before discussing the order, we show this bijection.
\begin{lemma}
    \label{lem:bij_nm}
    The following is a (set) bijection
    \[
        \phi: \mcD_{ {n,m}_\maxin} \to \mcD_{n,m-1}
    \]
    where $\laf_{\phi(D)} = \laf_{\widehat{D}}$.
\end{lemma}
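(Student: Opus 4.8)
The plan is to read off from \autoref{cor:indeg_is_N-1} an explicit description of $\Dinnm$, verify that $\phi$ lands in $\mcD_{n,m-1}$, and then invert it by a map of the same shape. First I would record the arithmetic of the $m$-Dyck setting: for $\nu=(NE^m)^n$ one has $\laf_\nu=(0,m,2m,\dots,(n-1)m)$ and $\horiz{p}=ym-x$ for every point $p=(x,y)$ weakly above $\nu$, so that $\horiz{r_i^D}=(i-1)m-(\laf_D)_i$. By \autoref{cor:indeg_is_N-1}, $D\in\Dinnm$ exactly when $t_i^D=h_i^D$ and $\horiz{r_i^D}\neq0$ for all $1<i\le n$. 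From this I would extract the two structural facts that drive everything: (a) the hit points $h_1^D,\dots,h_n^D$ are pairwise distinct, since a coincidence $h_i^D=h_j^D$ with $i<j$ would place $r_j^D$ strictly between $r_i^D$ and $h_i^D$ at the same horizontal distance, contradicting $t_i^D=h_i^D$; and (b) a hit point $h_j^D$ with $j<i$ that is not counted by $c_i:=\abs{\set{j}{h_j^D\text{ before }r_i^D}}$, \ie has $x$-coordinate $\ge$ that of $r_i^D$, must lie at height $\ge i$ (at any lower height it would sit strictly left of the rightmost point of that row, being a hit point rather than a right hand point, hence strictly left of $r_i^D$). These give $(\laf_{\widehat D})_i=(\laf_D)_i-c_i$ together with the dual identity $\raf_{\widehat D}(i)=\horiz{r_i^D}+c_i-(i-1)$.

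Second, I would check $\phi$ is well defined, \ie that $\laf_{\widehat D}$ is a legal left area vector for $\mcD_{n,m-1}$: nonnegative, weakly increasing, and bounded by $(\laf_{\nu'})_i=(i-1)(m-1)$ where $\nu'=(NE^{m-1})^n$. Nonnegativity is immediate, because the $c_i$ counted hit points occupy $c_i$ distinct $x$-coordinates in $[0,(\laf_D)_i)$, so $c_i\le(\laf_D)_i$; monotonicity is the same counting applied to a gap, since $c_{i+1}-c_i\le(\laf_D)_{i+1}-(\laf_D)_i$. The substantive inequality is the upper bound, equivalently $\raf_{\widehat D}(i)\ge0$: by (b) the lower row hit points \emph{not} counted by $c_i$ all lie strictly after $r_i^D$, at heights $\ge i$, and I would show they occur at pairwise distinct horizontal distances lying in $\{0,1,\dots,\horiz{r_i^D}-1\}$. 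Both the ceiling and distinctness use that each $h_j^D=t_j^D$ is a first return: none can reach level $\horiz{r_i^D}$ (that would make $r_i^D$ an earlier same-level return), and two at one level would violate the first-return property. Hence their number is at most $\horiz{r_i^D}$, which is exactly $\raf_{\widehat D}(i)\ge0$.

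Finally, for bijectivity I would exhibit the inverse explicitly. Define $\psi\colon\mcD_{n,m-1}\to\mcD_\nu$ by the mirror formula $(\laf_{\psi(E)})_i=(\laf_E)_i+\abs{\set{j}{h_j^E\text{ before }r_i^E}}$, now \emph{adding} the hit point counts of $E$ computed in the $(m-1)$-Dyck ambient. The argument then collapses to two claims: that $\psi(E)\in\Dinnm$, and that the hit point count vector is unchanged by the uniform horizontal shift relating $D$ to $\widehat D$ (and $E$ to $\psi(E)$), so that $\psi\circ\phi=\mathrm{id}$ and $\phi\circ\psi=\mathrm{id}$. I expect the main obstacle to be precisely this last point: the correction $c_i$ is defined self-referentially through the hit points of the very path being modified. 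I would resolve it by proving, by induction on the rows processed from the bottom, that sliding $r_i$ left past the lower hit points sends touch points to touch points and hit points to hit points while preserving their horizontal distances; invariance of the count vector and membership in $\Dinnm$ both then follow.
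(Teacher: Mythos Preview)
Your forward direction is correct and genuinely different from the paper's. The paper verifies that $\phi$ lands in $\mcD_{n,m-1}$ by an iterative construction: set $D^1=D$ and obtain $D^{i}$ from $D^{i-1}$ by removing the east step after $h_i^{D^{i-1}}$ and pushing it to the end, then argue that $D^n=\widehat D$ is weakly above $(NE^{m-1})^nE^n$. You bypass the iteration entirely with two clean structural observations about $D\in\Dinnm$: the hit points are pairwise distinct (hence at pairwise distinct $x$-coordinates, since each non-final hit point is followed by an east step), and any lower-row hit point not counted by $c_i$ must lie at height $\geq i$. The first gives nonnegativity and monotonicity of $\laf_{\widehat D}$ by pigeonhole; the second, together with the first-return property of $t_j^D=h_j^D$, forces the uncounted hit points to have pairwise distinct horizontal distances strictly below $\horiz{r_i^D}$, yielding the upper bound. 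This is more conceptual than the paper's argument and exposes exactly why the $\Dinnm$ hypothesis is needed at each step.

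For the inverse the two approaches diverge more substantially. The paper does not write down a closed formula for $\phi^{-1}$; instead it grows $D_{n+1}=E$ back to $D_1$ by adding an east step after $h_{i-1}^{D_i}$ while simultaneously inserting an east step in row $i$ of the reference path, and then runs a four-case analysis (on the position of $j$ relative to $i-1$ and to the row $h$ containing $h_{i-1}^{D_i}$) to show that every $t_j=h_j$ coincidence already present in $D_i$ survives in $D_{i-1}$ and that a new one is created at row $i-1$. Your proposed $\psi$, adding the hit-point counts of $E$ computed in the $(m-1)$-ambient, is a priori a different map, and the whole argument rests on the invariance claim $c_i^D=c_i^{\widehat D}$. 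This claim is true, but your sketch (``induction on rows from the bottom, sliding $r_i$ left past the lower hit points preserves touch/hit points and their horizontal distances'') does not obviously organise the dependencies correctly: the hit point of row $i$ can sit at an arbitrarily high height, so fixing lower rows first does not immediately control it, and horizontal distances are \emph{not} preserved row by row (they shift by $c_i-(i-1)$). What actually drives the invariance is closer to the paper's iteration: removing the east step after a single hit point $h_i$ leaves every hit point that comes before $h_i$ static and translates every hit point weakly after $h_i$ one step west, hence preserves all ``before $r_k$'' relations. You would likely end up reproving essentially this step; the paper's row-by-row case analysis, while less elegant than your forward direction, makes the inductive invariant explicit.
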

\begin{proof}
    Let $\nu = (NE^m)^n$ and recall $\maxin(\mbbT_\nu) = n-1$ by \autoref{cor:indeg_is_N-1}.

    Suppose that $D \in \mcD_{{n,m}_{\maxin}}$ is a $\nu$-Dyck path which covers exactly $n-1$ $m$-Dyck paths of height $n$ in the $\nu$-Tamari order.
    By \autoref{cor:indeg_is_N-1}  for every $1 < i \leq n$ then $h_i^D = t_i^D$ and $\horiz[\nu]{r_i^D} \neq 0$.
    Let $D^{1} = D$ and let $D^{i}$ be the path obtained from $D^{i-1}$ by removing the east step directly after $h_i^{D^{i-1}}$ and placing it at the end of the path.
    By construction, $D^n = \widehat{D}$.
    An example of this can be seen on the right hand side of \autoref{fig:phi-ex}.
    Note that this operation keeps all $h_j^{D^{i-1}}$ static when $h_j^{D^{i-1}}$ comes before $h_i^{D^{i-1}}$ and moves $h_j^{D^{i-1}}$ one step to the west otherwise.
    Notice that $\widehat{D}$ is weakly above $(NE^{m-1})^nE^n$ if and only if $\horiz[\nu]{r_i^{\widehat{D}}} \geq (i-1)$ for every $i$.
    But $\horiz[\nu]{r_i^{\widehat{D}}} = \horiz[\nu]{r_i^D} + \#\set{h_j^D}{h_j^D \text{ comes before } r_i^D}$.
    For the hit points of $D$ for $j < i$, we remove one east step for every $h_j^D$ that comes before $r_i^D$ and for each $h_j^D$ that doesn't come before $r_i^D$ then $\horiz[\nu]{r_i^D} > \horiz[\nu]{r_j^D}$.
    Therefore every $j < i$ contributes at least one to the horizontal distance and $\horiz[\nu]{r_i^D} + \#\set{h_j^D}{h_j^D \text{ comes before } r_i^D} \geq i-1$ as desired.
    Therefore $\widehat{D}$ is weakly above $(NE^{m-1})^nE^m$.
    Letting $\phi(D)$ be $\widehat{D}$ with the final $m$ east steps removed implies that $\phi(D)$ lives in $\mcD_{n, m-1}$ and $\laf_{\phi(D)} = \laf_{\widehat{D}}$.

    In the other direction, let $\nu_{i-1}$ be the path obtained from $\nu_{i}$ with an east step added in the $i$-th row where $\nu_{n+1}$ is the path $(NE^{m-1})^{n}$.
    Then $\nu_1$ is the path $(NE^{m})^n = \nu$.
    Let $D_{n+1} \in \mcD_{n,m-1} = \mcD_{\nu_{n+1}}$ and inductively, let $D_{i-1}$ be obtained from $D_{i}$ by adding an east step after $h_{i-1}^{D_i}$ in $D_i$.
    Notice that $D_{i-1} \in \mcD_{\nu_{i-1}}$ since we are adding at most $j - i  - 1$ east steps before the $j$-th row where $j > i$.
    It remains to show that $D_{1} \in \Dinnm$, \ie $D_{1}$ covers exactly $n-1$ $m$-Dyck paths of height $n$.
    By construction, for $i = 1$ we have $\horiz[\nu_1]{r_1^{D_1}} = 0$ implying we can't go down for $i = 1$.
    Therefore we must show for all $i > 1$ that $h_i^{D_{1}} = t_i^{D_{1}}$ and $\horiz[\nu_1]{r_i^{D_1}} \neq 0$.

    Therefore, we suppose $i > 1$.
    Let $h$ be the row containing the point $h_{i-1}^{D_{i}}$.
    Recall that going from $D_{i}$ to $D_{i-1}$ pushes every point after $h_{i-1}^{D_{i}}$ to the east (as we add an east step in the $h$-th row) and also adds an east step to the $i$-th row of $\nu_i$ to obtain the $\nu_{i-1}$-Dyck path $D_{i-1}$.
    In other words, the horizontal distance of every point stays the same unless the point lies strictly between $r_{i-1}^{D_{i}}$ and $h_{i-1}^{D_{i}}$, in which case the horizontal distance increases by exactly one.

    We will show that if $t_j^{D_{i}} = h_j^{D_{i}}$ and $\horiz[\nu_{i}]{r_j^{D_i}} \neq 0$ then $t_j^{D_{i-1}} = h_{j}^{D_{i-1}}$ and $\horiz[\nu_{i-1}]{r_j^{D_{i-1}}} \neq 0$ for all $j$.
    We break this down into four cases.
    \begin{itemize}
        \item If $j < i-1$ then there are two cases to consider.
            If $h_{i-1}^{D_i}$ comes before $h_j^{D_i}$ then $h_j^{D_i}$ gets shifted over to the east by one step to become $h_j^{D_{i-1}}$ (and similarly with $t_i^{D_i}$).
            But since $\nu_{i-1}$ adds an east step in the $i$-th row of $\nu_i$, the horizontal distance doesn't change.
            If $h_j^{D_i}$ comes before $h_{i-1}^{D_i}$ then, in particular, it comes before $r_{i-1}^{D_i}$.
            Therefore the horizontal distances, hit and touch points aren't altered.
        \item If $i-1 = j$ then $t_{i-1}^{D_{i-1}} = h_{i-1}^{D_{i-1}}$ and $\horiz[\nu_{i-1}]{r_{i-1}^{D_{i-1}}} > \horiz[\nu_i]{r_{i-1}^{D_i}} > 0$ since the east step is placed after $h_{i-1}^{D_i}$ and a new east step is added on the $i$-th row of $\nu_i$, as desired.
        \item If $i-1 < j < h$, then every point stays where they are, but the horizontal distance increases by $1$ since we are adding an east step in the $i$-th row when going to $\nu_{i-1}$.
            Therefore, $t_j^{D_{i-1}} = h_{j}^{D_{i-1}}$ and $\horiz[\nu_{i-1}]{r_j^{D_{i-1}}} > \horiz[\nu_i]{r_j^{D_i}} > 0$ as desired.
        \item If $h \leq j$, then every point moves to the east by one step and also the horizontal distances stay the same (since we added an east step in the $i$-th row).
            Therefore, $t_j^{D_{i-1}} = h_{j}^{D_{i-1}}$ and $\horiz[\nu_{i-1}]{r_j^{D_{i-1}}} = \horiz[\nu_i]{r_j^{D_i}} > 0$ as desired.
    \end{itemize}
    Therefore, in all cases we have exactly that  if $t_j^{D_{i}} = h_j^{D_{i}}$ and $\horiz[\nu_{i}]{r_j^{D_{i}}} \neq 0$ then $t_j^{D_{i-1}} = h_{j}^{D_{i-1}}$ and $\horiz[\nu_{i-1}]{r_j^{D_{i-1}}} \neq 0$ for all $j$.

    It remains to show that going from $D_{i}$ to $D_{i-1}$ makes it so that if $t_{i-1}^{D_{i}} \neq h_{i-1}^{D_{i}}$ then $t_{i-1}^{D_{i-1}} = h_{i-1}^{D_{i-1}}$ and $\horiz[\nu_{i-1}]{r_{i-1}^{D_{i-1}}} > 0$.
    Indeed, for every point between $r_{i-1}^{D_{i}}$ and $h_{i-1}^{D_{i}}$ the horizontal distance increases by one as we are adding an east step on the $i$-th row of $\nu_i$.
    Furthermore, since we are adding an east step after $h_{i-1}^{D_i}$, then $h_{i-1}^{D_{i-1}}$ becomes the point directly to the east of $h_{i-1}^{D_i}$.
    As all other points had their horizontal distance increased by one, there are no points between $r_{i-1}^{D_{i-1}}$ and $h_{i-1}^{D_{i-1}}$ which has the same horizontal distance.
    Therefore, $t_{i-1}^{D_{i-1}} = h_{i-1}^{D_{i-1}}$ and the horizontal distance is greater than $0$ since $\horiz[\nu_{i-1}]{r_{i-1}^{D_{i-1}}} = \horiz[\nu_{i}]{r_{i-1}^{D_{i}}} > 0$.

    Combining these facts means that in $D_{1}$ for every $i> 1$ we have that $h_i^{D_{1}} = t_{i}^{D_{1}}$ and $\horiz{r_i^{D_1}} \neq 0$ since going from $n+1$ to $1$ forces touch and hit points to coincide one row at a time while keeping touch and hit points of all other rows not changed if they already coincide.
    Therefore every touch point and hit point coincide and we can go down in the $\nu$-Tamari order for every $i > 1$ and $D_{1} \in \mcD_{ {n,m}_{\maxin}}$, giving us the reverse bijection.
\end{proof}

As a nice corollary, this bijection implies that the number of elements with maximal in-degree is equal to the number of elements with maximal out-degree.
\begin{corollary}
    \label{cor:in-eq-out}
    The number of maximal in-degree $m$-Dyck paths of height $n$ is equal to the number of out-degree $m$-Dyck paths of height $n$, \ie $\order{\Dinnm} = \order{\Doutnm}$.
\end{corollary}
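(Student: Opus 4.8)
The plan is to combine the two bijections already constructed in this section, both of which have the same target $\mcD_{n,m-1}$. First I would invoke \autoref{lem:bij_nm}, which gives a (set) bijection $\phi \colon \Dinnm \to \mcD_{n,m-1}$ (recall that $\Dinnm$ is exactly the set $\mcD_{{n,m}_\maxin}$ appearing in that lemma); this yields $\order{\Dinnm} = \order{\mcD_{n,m-1}}$. Then I would invoke the order-preserving bijection $\varphi \colon \Doutnm \to \mcD_{n,m-1}$ established as the main theorem of \autoref{ss:out-degree-poset}, which yields $\order{\Doutnm} = \order{\mcD_{n,m-1}}$. Chaining these two equalities through the common cardinality $\order{\mcD_{n,m-1}}$ gives $\order{\Dinnm} = \order{\Doutnm}$, which is the claim.

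Since both bijections are already in hand, there is essentially no obstacle left: all of the combinatorial work was absorbed into verifying that $\phi$ and $\varphi$ are well defined and bijective, so the corollary is a one-line bookkeeping consequence. If one wanted an explicit bijection between the two subposets rather than merely an equality of cardinalities, one could simply take the composite $\varphi^{-1} \circ \phi \colon \Dinnm \to \Doutnm$; but this extra step is unnecessary for the stated count, and I would not carry it out here.
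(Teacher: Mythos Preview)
Your proposal is correct and matches the paper's approach: the corollary is stated immediately after \autoref{lem:bij_nm} as a direct consequence of combining that bijection $\phi:\Dinnm\to\mcD_{n,m-1}$ with the earlier bijection $\varphi:\Doutnm\to\mcD_{n,m-1}$ from \autoref{ss:out-degree-poset}. The paper gives no further argument beyond this, so your write-up is exactly what is intended.
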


The bijection above is exclusively on sets and, in particular, it does not preserve the $\nu$-Tamari order.
Therefore, we denote by $\leq_D$ the order on $\Dinnm$ which gives us $\Tinnm$, \ie $\Tinnm = \left( \Dinnm, \leq_D \right)$.
This order extends naturally to arbitrary $\nu$ giving us $\Tinnu = \left( \Dinnu, \leq_D \right)$.
It turns out that this poset is poset isomorphic to another partial order on $\mcD_{n, m-1}$ which we describe next.

Let $D$ be a $\nu$-Dyck path from $(0, 0)$ to $(s_E, s_N)$.
If the $i$-th right hand point $r_i$ is preceded by an east step and followed by a north step, we define the $\nu$-Dyck path $\Gup{D}{i}$ in the following way.
Let $d$ denote the subword of $D_{\left[ (0,0), r_i \right]}$ where the final $E$ has been removed.
Let $h = D_{\left[ r_i, h_i \right]}$ and $f = D_{\left[h_i, (s_E, s_N)\right]}$.
In other words $D = dEhf$.
Then $\Gup{D}{i}$ is the word $dhEf$.
In other words, we move the east step before $r_i$ to just after the $i$-th hit point $h_i$.
The \defn{$\nu$-Greedy order} is then the order on $\nu$-Dyck paths where $D$ is covered by $\Gup{D}{i}$ whenever $\Gup{D}{i}$ is defined.
We denote this poset by $\mbbG_{\nu} = \left( \mcD_\nu, \leq_{G} \right)$.
This poset was first defined in \cite[Section 7.2]{Chapoton_2020} as it pertains to Dyck paths.

\begin{remark}
    Notice that the $\nu$-Greedy order is almost identical to the $\nu$-Tamari order except that we use hit points instead of touch points.
\end{remark}

It turns out that $\Tinnm = \left( \Dinnm, \leq_D \right)$ is (poset) isomorphic to $\mbbG_{n,m-1}$; the $\nu$-Greedy order for $m-1$-Dyck paths of height $n$.
\begin{theorem}
    \label{thm:iso-dexter-greedy}
    The following is a poset isomorphism:
    \[
        \phi: \left( \Dinnm, \leq_D \right) \to \left( \mcD_{ {n, m-1}}, \leq_{G} \right)
    \]
    where $\laf_{\phi(D)} = \laf_{\widehat{D}}$.
\end{theorem}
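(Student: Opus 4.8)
The plan is to take the set bijection $\phi$ from \autoref{lem:bij_nm} and upgrade it to an order isomorphism by showing that both $\phi$ and $\phi^{-1}$ are order preserving. Since $\Dinnm$ and $\mcD_{n,m-1}$ are finite and each of $\leq_D$ and $\leq_G$ is the transitive closure of its covering relation, it suffices to prove two implications: \textbf{(a)} if $\phi(D) \cover_G \phi(D')$ then $D <_D D'$, and \textbf{(b)} if $D \cover_D D'$ then $\phi(D) <_G \phi(D')$. Implication (a) makes $\phi^{-1}$ order preserving and (b) makes $\phi$ order preserving, and a bijection that is order preserving in both directions is an isomorphism. Throughout I would use that $\phi$ leaves the row structure intact, so that an index $i$ refers to the same row $r_i$ on the big path $D$ and on the smaller path $\phi(D)$.

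For (a) I would lift a single greedy move to the big path through the east-step insertion that defines $\phi^{-1}$ in \autoref{lem:bij_nm}. Fix $D \in \Dinnm$ and an index $i$ for which $\Gup{\phi(D)}{i}$ is defined, i.e. $r_i^{\phi(D)}$ is preceded by an east step and followed by a north step. Since $D \in \Dinnm$, \autoref{cor:indeg_is_N-1} gives $t_j^D = h_j^D$ and $\horiz{r_j^D} \neq 0$ for every row $j > 1$, and I would check that $r_i^D$ is likewise preceded by an east step (the east steps inserted by $\phi^{-1}$ only increase the number of east steps available in each row). I then repeatedly apply the up-move $\Tup{\cdot}{i}$ at the fixed index $i$: after each step \autoref{rem:touch_to_hit} restores the equality of the $i$-th touch and hit points, and I would verify that $r_i$ stays preceded by an east step until the east step has travelled exactly as far as the greedy move sends it. The claim is that the resulting path equals $\phi^{-1}\!\left(\Gup{\phi(D)}{i}\right)$, whence $D <_T \phi^{-1}\!\left(\Gup{\phi(D)}{i}\right) = D'$ and so $D <_D D'$.

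For (b) I would run this correspondence backwards. Given a cover $D \cover_D D'$, choose a saturated $\nu$-Tamari chain $D = C_0 \cover_T C_1 \cover_T \cdots \cover_T C_k = D'$; by maximality of the cover in $\leq_D$, none of $C_1,\dots,C_{k-1}$ lies in $\Dinnm$. I would show that this chain proceeds by up-moves at a single index $i$ (the first step $\Tup{D}{i}$ fixes $i$, and \autoref{lem:down-cover} together with the maximal in-degree condition forces each later step to occur in the same row before $\Dinnm$ can be re-entered), and that projecting through $\phi$ collapses the whole chain to the single greedy cover $\phi(D') = \Gup{\phi(D)}{i}$. In fact for order preservation of $\phi$ I only need the comparability $\phi(D) <_G \phi(D')$, but the fixed-index analysis yields the sharper statement and thereby matches the two Hasse diagrams edge for edge.

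The main obstacle is the bookkeeping in the lifting step: proving that the number of up-moves at index $i$ needed to leave and re-enter $\Dinnm$ equals the horizontal distance the east step travels under the single greedy move, once one accounts for the east steps that $\phi^{-1}$ re-inserts after the hit points of the smaller path. This is exactly the phenomenon in which ascending at a fixed index drags a whole block of rows west (as in the worked $(NE^2)^3$ example, where $(0,0,3)$ reaches $(0,0,1)$ in two $\nu$-Tamari steps through the path $(0,0,2) \notin \Dinnm$ while its image moves only one step), and controlling it requires the same careful analysis of hit points, touch points and horizontal distances that underlies \autoref{lem:bij_nm} and \autoref{lem:many-stairs}. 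Establishing that the intermediate paths all fall outside $\Dinnm$ — equivalently, that the greedy ``as far as possible'' ascent corresponds to ``ascend in the $\nu$-Tamari order until touch points and hit points are restored'' — is the crux on which both implications rest.
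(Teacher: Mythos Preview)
Your high-level split into (a) and (b) matches the paper's: Proposition~\ref{prop:greedy-to-dex} handles your (a) and Proposition~\ref{prop:dex-to-greedy} handles your (b). The treatment of (a) is also close in spirit --- the paper, too, constructs an explicit $\nu$-Tamari chain from $\phi^{-1}(D)$ to $\phi^{-1}(D')$ --- though you should be aware that the paper's chain does \emph{not} stay at the single index~$i$: it alternates moves at auxiliary indices $k>i$ with moves at~$i$, and the total number of iterations is $1+\#\{p<i : h_p^{D}=r_i^{D'}\}$, reflecting exactly the extra east steps that $\phi^{-1}$ inserts in row~$i$ for~$D$ but in row~$j$ for~$D'$. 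Your fixed-$i$ idea may still land at the right endpoint, but you have not verified that the number of available east steps before $r_i$ and the eventual left area vector match those of $\phi^{-1}(D')$; the paper's multi-index algorithm sidesteps this by moving one block at a time.

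The real gap is in (b). Your argument rests on the claim that any saturated $\nu$-Tamari chain realising a $\leq_D$-cover can be chosen to use a single index~$i$, and that $\phi$ then collapses it to a single greedy cover. You offer no mechanism for proving this, and the paper does not attempt anything of the kind. Instead, the paper builds substantial new machinery: it embeds $\mcD_{n,m}$ into $\mcD_{mn}$ via~$\pi$, introduces a \emph{hit distance function} $d^h$ alongside the touch distance $d^t$ of Bousquet-M\'elou--Fusy--Pr\'eville-Ratelle, and proves (Proposition~\ref{prop:greedy-distance}) that $D\leq_G D'$ is equivalent to the pair of componentwise inequalities $d_D^t\leq d_{D'}^t$ and $d_D^h\leq d_{D'}^h$. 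The forward direction of~(b) is then obtained by tracking, via an explicit recursion (Lemma~\ref{lem:dist-fn-phibar}), how both distance vectors evolve through the steps of the $\bar\phi$ algorithm, and checking that neither inequality can flip. This is a global argument on invariants rather than a local argument on chains, and it is what makes the direction $\Tinnm\to\mbbG_{n,m-1}$ go through; your proposal does not supply a substitute for it.
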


We break this theorem up into the two directions to make it more simple.

\subsubsection{\texorpdfstring{$\mbbG_{n,m-1}$ to $\Tinnm$}{G to T}}
We start by showing that if two elements are comparable in the $\nu$-Greedy order, then they are comparable in the same way in $\left( \Dinnu, \leq_D \right)$ after applying the reverse of the $\phi$ function from  \autoref{lem:bij_nm}.
\begin{proposition}
    \label{prop:greedy-to-dex}
    Let $\phi$ be the map from \autoref{lem:bij_nm}.
    For $D, D' \in \mcD_{{n, m-1}}$ such that $D \leq_G D'$ then $\phi^{-1}(D) \leq_D \phi^{-1}(D')$.
\end{proposition}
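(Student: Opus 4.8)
The plan is to reduce everything to cover relations and then to prove the sharp statement that $\phi^{-1}$ carries a single greedy cover to a single $\nu$-Tamari cover. Since $\leq_G$ is the transitive closure of its covers and $\leq_D$ is transitive, it suffices to treat one greedy cover $D \cover_G \Gup{D}{i}$ in $\mcD_{n,m-1}$ and to show $\phi^{-1}(D) \leq_D \phi^{-1}(\Gup{D}{i})$. Concretely, I would establish the following: whenever $\Gup{D}{i}$ is defined, the right hand point $r_i^{\phi^{-1}(D)}$ is preceded by an east step, $\Tup{\phi^{-1}(D)}{i}$ is defined, and
\[
    \Tup{\phi^{-1}(D)}{i} = \phi^{-1}\!\left( \Gup{D}{i} \right).
\]
Because $\phi^{-1}$ lands in $\Dinnm$, the right hand side is again a maximal in-degree path, so this single $\nu$-Tamari cover is in fact a cover for $\leq_D$; iterating along a greedy chain then yields $\phi^{-1}(D) \leq_D \phi^{-1}(D')$.

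To prove the displayed identity I would work with the inductive description of $\phi^{-1}$ from the proof of \autoref{lem:bij_nm}, in which $\phi^{-1}(D)$ is built from $D$ by inserting, row by row from the top, one east step immediately after each hit point. Since inflation only inserts east steps, it preserves the number of rows, so the $i$-th right hand point of $D$ and of $\phi^{-1}(D)$ lie in the same row $i$ and the relevant index does not shift. The geometric heart of the argument is the row correspondence between the two moves: the greedy move on $D$ pushes the east step before $r_i^D$ all the way to the hit point $h_i^D$, whereas the $\nu$-Tamari move on $\phi^{-1}(D)$ pushes the east step only to the touch point $t_i^{\phi^{-1}(D)}$. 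I claim these land in the same row. Indeed, $\phi^{-1}(D)$ has maximal in-degree, so by \autoref{cor:indeg_is_N-1} its touch and hit points coincide for every row $i>1$, that is $t_i^{\phi^{-1}(D)} = h_i^{\phi^{-1}(D)}$; the insertion of east steps after the hit points of $D$ is exactly what \emph{stretches} the staircase separating the touch point from the hit point of $D$, so that the hit point $h_i^D$ becomes the touch point $t_i^{\phi^{-1}(D)}$. Thus the single Tamari jump reaches precisely the row that the greedy jump reached in $D$, and \autoref{rem:touch_to_hit} certifies that after the move the displaced east step again sits just after a hit point, matching the recipe defining $\phi^{-1}(\Gup{D}{i})$.

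The main obstacle is this row-matching together with the bookkeeping of the inserted east steps: I must verify that the hit points of $D$ and those of $\Gup{D}{i}$ differ exactly by the relocations effected by the single $\nu$-Tamari cover, so that inflating $\Gup{D}{i}$ yields the same path as applying $\Tup{-}{i}$ to the inflation of $D$. I would handle this by tracking horizontal distances through the inflation: inserting an east step after a hit point $h_j^D$ leaves the horizontal distance of every right hand point unchanged while shifting the points beyond $h_j^D$ one step east, and comparing the effect of these insertions before and after the greedy move, using the row-by-row induction of \autoref{lem:bij_nm}, shows the two constructions agree. The delicate point to check carefully is that $\Gup{D}{i}$ being defined forces $r_i^{\phi^{-1}(D)}$ to be preceded by an east step; this holds because the greedy move requires $r_i^D$ to be preceded by an east step and the inflation never removes the east step immediately preceding a right hand point, only possibly inserts further ones earlier in the same row.
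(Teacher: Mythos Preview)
Your reduction to a single greedy cover is fine, but the displayed identity
\[
    \Tup{\phi^{-1}(D)}{i} \;=\; \phi^{-1}\!\bigl(\Gup{D}{i}\bigr)
\]
is in general \emph{false}, and this is precisely the subtlety the paper's proof is built around. The problem is the case you do not account for: there may exist indices $p<i$ whose hit point $h_p^{D}$ is the point $x_i^{D}$ immediately to the west of $r_i^{D}$ (so $h_p^{D}$ is followed by the very east step that the greedy move removes). For such $p$, after the greedy move the point $x_i^{D}$ is no longer followed by an east step, so it ceases to be a hit point; one checks that $h_p^{\Gup{D}{i}}=h_i^{\Gup{D}{i}}$, which lies in the row~$j$ containing $h_i^{D}$, not in row~$i$. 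Consequently the inflation $\phi^{-1}$ inserts the east step associated with row~$p$ into row~$i$ when applied to $D$, but into row~$j$ when applied to $\Gup{D}{i}$. The two inflated paths then differ in their left area vectors by
\[
    1 \;+\; \#\bigl\{\,p<i \;:\; h_p^{D}=x_i^{D}\,\bigr\}
\]
in each of the rows $i,\dots,j-1$, and a single $\nu$-Tamari cover can only change the area vector by~$1$ in a contiguous block. So your ``single cover to single cover'' picture collapses whenever this set is nonempty.

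The paper's proof confronts exactly this: it first carries out the bookkeeping of how the hit points $h_k^{D}$ and $h_k^{\Gup{D}{i}}$ differ (splitting into the cases $k<i$, $i\le k<j$, $k\ge j$, and within $k<i$ into four subcases according to where $h_k^{D}$ lands), obtains the area-vector discrepancy above, and then exhibits an explicit chain of $\nu$-Tamari covers from $\phi^{-1}(D)$ up to $\phi^{-1}(\Gup{D}{i})$ that absorbs the extra $\#\{p<i:h_p^{D}=x_i^{D}\}$ units. That chain does not stay at index~$i$: one must first go up at suitable indices $k>i$ inside the block until $t_i$ reaches row~$j$, and then go up at~$i$; this loop runs $1+\#\{p<i:h_p^{D}=x_i^{D}\}$ times. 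Your sketch would need to be reorganised around this multi-step chain rather than the single-cover identity.
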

\begin{proof}
    Let $\nu = (NE^{m-1})^n$ and $\nu' = (NE^m)^n$.
    Suppose that $D, D' \in \mcD_{n, m-1}$ such that $D \cover_G D'$ in the $\nu$-Greedy order.
    Then for $D$, there exists an $i$ such that $D' = \Gup{D}{i}$.
    In other words, we go up in the $\nu$-Greedy order such that $D = dEhf$ and $D' = dhEf$ where
    \begin{align*}
        d &= D_{\left[ (0,0), r_i \right]} \text{ with the final $E$ removed },\\
        h &= D_{\left[ r_i, h_i \right]}, \text{ and }\\
        f &= D_{\left[h_i, (s_E, s_N)\right]}.
    \end{align*}
    We now apply $\phi^{-1}$ to both $D$ and $D'$ and aim to show $\phi^{-1}(D) \leq_D \phi^{-1}(D')$.
    Since $\leq_D$ is a restriction of $\leq_T$, it suffices to show $\phi^{-1}(D) \leq_T \phi^{-1}(D')$.

    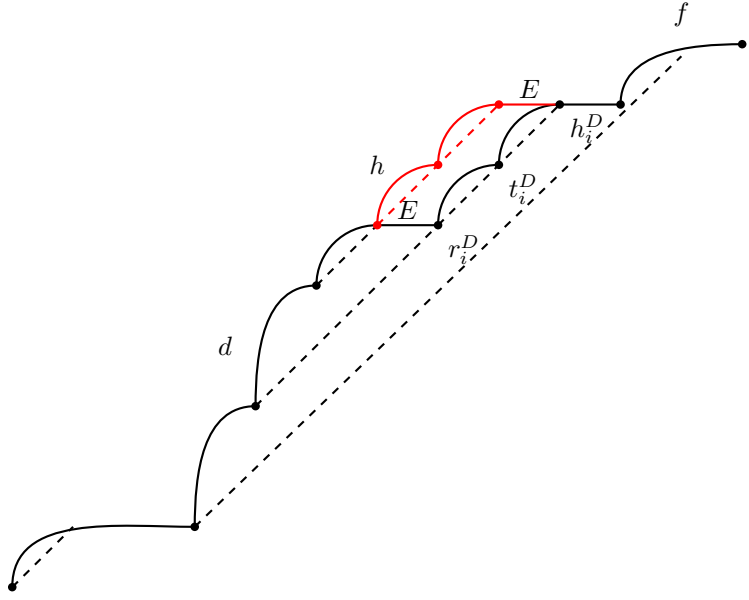
\begin{figure}[t]
        \begin{center}
            \begin{tikzpicture}[thick,scale=0.8
    ]
    \draw (0,0) to[out=90, in=180] (3,1);
    \draw (3,1) to[out=90, in=180] (4,3);
    \draw (4,3) to[out=90, in=180] (5,5);
    \draw (5,5) to[out=90, in=180] (6,6);
    \draw (6,6) -- (7, 6);
    \node at (6.5, 6.25) {$E$};
    \draw (7,6) to[out=90, in=180] (8,7);
    \draw (8,7) to[out=90, in=180] (9,8);
    \draw (9,8) -- (10,8);
    \draw (10,8) to[out=90, in=180] (12,9);

    \draw[dashed] (0,0) -- (1,1);
    \draw[dashed] (3,1) -- (11,8.8);
    \draw[dashed] (4,3) -- (7,6);
    \draw[dashed] (7,6) -- (9,8);
    \draw[dashed] (5,5) -- (6,6);

    \draw[red] (6,6) to[out=90, in=180] (7,7);
    \draw[red] (7,7) to[out=90, in=180] (8,8);
    \draw[red] (8,8) -- (9,8);
    \node at (8.5, 8.25) {$E$};
    \draw[red, dashed] (6,6) -- (8,8);
    

    \fill (0,0) circle[radius=2pt];
    \fill (3,1) circle[radius=2pt];
    \fill (4,3) circle[radius=2pt];
    \fill (5,5) circle[radius=2pt];
    \node at (3.5,4) {$d$};
    \fill[red] (6,6) circle[radius=2pt];
    \fill (7,6) circle[radius=2pt] node[below right] {$r_i^D$};
    \fill (8,7) circle[radius=2pt] node[below right] {$t_i^D$};
    \fill (9,8) circle[radius=2pt] node[below right] {$h_i^D$};
    \fill (10,8) circle[radius=2pt];
    \node at (11,9.5) {$f$};
    \fill (12,9) circle[radius=2pt];
    \fill[red] (7,7) circle[radius=2pt];
    \node at (6,7) {$h$};
    \fill[red] (8,8) circle[radius=2pt];
\end{tikzpicture}
            \caption{The path on the bottom is $D$ and the path on top is $\Gup{D}{i}$ where $D \cover_G \Gup{D}{i}$. The dashed lines indicate points with the same horizontal distance. The path $\nu$ is not shown in this figure.}
            \label{fig:greedy-to-dex}
        \end{center}
    \end{figure}

    Let $j > i$ be the row in which $h_i^D$ lies.
    We first analyse what happens to the points $d$, $h$, and $f$ under the map $\phi^{-1}$ in both $D$ and $D'$.
    Recall that applying $\phi^{-1}$ to a $\nu$-Dyck path adds an east step before each $h_k$ inductively from $k = n$ to $k = 1$.
    See \autoref{fig:greedy-to-dex} for a graphical view of the difference between the hit points of $D$ and $D'$.
    \begin{itemize}
        \item[f:] In both $D$ and $D'$, $f$ contains the same points.
            Since when applying $\phi^{-1}$ we proceed inductively from highest to lowest, for each $r_k^D$ with $k \geq j$, then $h_k^D = h_k^{D'}$ and each $k$-th right hand point in $D$ and $D'$ contribute an east step to the same row under $\phi^{-1}$.
        \item[h:] In both $D$ and $D'$, $h$ is associated to the rows between $i$ and $j$.
            Let $i \leq k < j$.
            Then $h_k^D$ is between $r_i^D$ and $h_i^D$ and therefore we know that $r_k^D$ contributes an east step only to a row strictly above $i$ and weakly below $j$.
            Going from $D$ to $D'$ only shifts these points one point to the east and therefore doesn't alter which rows each $r_k^{D'}$ contributes an east step to under the map $\phi^{-1}$.
            Therefore, each $k$-th right hand point in $D$ and $D'$ contribute an east step to the same row under $\phi^{-1}$.
        \item[d:] Finally, $d$ is the same initial path in $D$ and $D'$ and thus contains the same points.
            Let $k < i$ and there are four cases to consider.
            In all cases we have that $r_k^{D} = r_{k}^{D'}$ since $k < i$ and $D$ and $D'$ coincide on the subpath $d$.
            These cases can be viewed as the four dashed lines in \autoref{fig:greedy-to-dex}.
            \begin{itemize}
                \item If $h_k^D$ is on the path $d$ (the bottom left dashed line in \autoref{fig:greedy-to-dex}) and since $d$ coincides in $D$ and $D'$, then $h_k^{D} = h_k^{D'}$.
                    Therefore, under the map $\phi^{-1}$, $r_k^{D} = r_{k}^{D'}$ will contribute an east step to the same row.
                \item If $h_k^D$ is on the path $f$ (the bottom right dashed line in \autoref{fig:greedy-to-dex}) and since $f$ coincides in $D$ and $D'$, then $h_k^D = h_k^{D'}$.
                    Therefore, under the map $\phi^{-1}$, $r_k^{D} = r_{k}^{D'}$ will contribute an east step to the same row.
                \item If $h_k^D = h_i^D$ (the middle dashed line in \autoref{fig:greedy-to-dex}) then $h_k^{D} = h_k^{D'}$ as an east step was placed after $h_i^{D}$, the point $h_k^D$ still exists on $D'$ and no point between $r_k^D$ and $h_k^D$ has a smaller horizontal distance by construction.
                    Therefore, under the map $\phi^{-1}$, $r_k^{D} = r_{k}^{D'}$ will contribute an east step to the same row.
                \item Finally, if $h_k^D = r_i^{D'}$ (the top dashed line in \autoref{fig:greedy-to-dex}) is the point directly to the west of $r_i^D$ then the point $h_k^D$ becomes a touch point (but not necessarily $t_k^{D'}$) in $D'$ and the hit point then coincides with the hit point of $r_i^{D'}$, \ie $h_k^{D'} = h_i^{D'}$.
            \end{itemize}
            In other words, under the $\phi^{-1}$ map, if $h_k^D = r_i^{D'}$, then the $k$-th row contributes an east step in the $i$-th row for $D$ and in the $j$-th row for $D'$.
            In all other cases the $k$-th right hand point contributes an east step to the same row in both $D$ and $D'$ under $\phi^{-1}$.
    \end{itemize}
    Putting everything together, then $\phi^{-1}$ adds an east step to the same rows for both $D$ and $D'$ except for if $k < i$ such that $h_k^D = r_i^{D'}$ (which then contributes to the $i$-th row in $D$ and $j$-th row in $D'$).

    Next, using left area vectors, we have that $\laf_D$ and $\laf_{D'}$ are identical in every component except for in the $i$ through $j$ components (in which they are off by one).
    Combining this fact with the above, then
    \[
        \left( \laf_{\phi^{-1}(D)} \right)_k = \begin{cases}
            \left( \laf_{\phi^{-1}(D')} \right)_k & \text{if } k < i \text{ or } k \geq j\\
            \left(  \laf_{\phi^{-1}(D')}\right)_k - 1 - \#\set{p < i}{h_p^D = r_i^{D'}} & \text{if } i \leq k < j \\
        \end{cases}
    \]
    
    It remains to show that $\phi^{-1}(D) \leq_T \phi^{-1}(D')$ in the $\nu$-Tamari order.
    Note that since $j$ is the row containing $h_i^{D}$ and since $\phi$ doesn't alter the rows where touch and hit points exist, there is a point $p$ on the $j$-th row of $\phi^{-1}(D)$ such that $\horiz[\nu']{r_i^{\phi^{-1}(D)}} = \horiz[\nu']{p}$.
    Start with $\phi^{-1}(D)$ and do the following algorithm (which runs $1 + \#\set{p < i}{h_p^D = r_i^{D'}}$ times):
    \begin{enumerate}
        \item Let $p$ be the point on the $j$-th row such that $\horiz[\nu']{r_i} = \horiz[\nu]{p}$.
        \item While $t_i \neq p$, let $k > i$ be such that $t_k = p$ and go up in the $\nu$-Tamari order at $k$.
        \item Once no such $k$ exists, go up in the $\nu$-Tamari order at $i$.
        \item Continue the algorithm until $p = h_i^{\phi^{-1}(D')}$.
    \end{enumerate}
    In this manner we obtain $\phi^{-1}(D')$ as desired.
\end{proof}

\subsubsection{\texorpdfstring{$\Tinnm$ to $\mbbG_{n,m-1}$}{T to G}}
We next show that the reverse bijection is true.
For this, we need some machinery which we lay out next.

First, we consider our $m$-Dyck paths of height $n$ as Dyck paths of height $mn$.
To do this, we change our $\nu$ to be the staircase shape $(NE)^{mn}$ and we convert a $m$-Dyck path of height $n$ by making each north step into $m$ north steps.
\begin{example}
    Suppose we have the following $2$-Dyck path of height $3$.
    \begin{center}
        \begin{tikzpicture}[scale=0.8]
            \draw[dotted] (0, 0) grid (6, 3);
            \draw[rounded corners=1, color=BrightRed, ultra thick] (0, 0) -- (0, 1) -- (2, 1) -- (2, 2) -- (4, 2) -- (4,3) -- (6,3);
            \draw[rounded corners=1, color=Black, ultra thick] (0,0) -- (0,2) -- (2,2) -- (2,3) -- (6,3);
        \end{tikzpicture}
    \end{center}
    Then we can convert this into a Dyck path of height $6\; (=2 \cdot 3)$ by making each north step become length $2$.
    \begin{center}
        \begin{tikzpicture}[scale=0.5]
            \draw[dotted] (0, 0) grid (6, 6);
            \draw[rounded corners=1, color=BrightRed, ultra thick] (0, 0) -- (0, 1) -- (1, 1) -- (1,2) -- (2, 2) -- (2, 3) -- (3, 3) -- (3, 4) -- (4, 4) -- (4, 5) -- (5, 5) -- (5, 6) -- (6,6);
            \draw[rounded corners=1, color=Black, ultra thick] (0,0) -- (0,4) -- (2,4) -- (2,6) -- (6,6);
        \end{tikzpicture}
    \end{center}
\end{example}

The fact that this is possible comes from the following proposition.
\begin{proposition}{\cite[Proposition 4]{BFP-noIntTam}}
    The poset $\mcT_{n,m}$ is poset isomorphic to an upper ideal in $\mcT_{mn}$ where the elements in $\mcD_{n,m}$ are mapped to their respective elements in $\mcD_{mn}$ under the conversion given above.
\end{proposition}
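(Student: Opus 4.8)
The plan is to work directly with horizontal distances. Write $\nu = (NE^m)^n$ and $\nu' = (NE)^{mn}$, and let $\iota \colon \mcD_{n,m} \to \mcD_{mn}$ be the conversion that replaces each north step by $N^m$. The computation that drives everything is this: for a lattice point $(x,y)$ of an $m$-Dyck path $D$ of height $n$ one has $\horiz[\nu]{(x,y)} = my - x$, while the corresponding point $(x,my)$ of $\iota(D)$ satisfies $\horiz[\nu']{(x,my)} = my - x$. Thus $\iota$ scales heights by $m$ and preserves horizontal distances exactly. In particular all horizontal distances of $\iota(D)$ are nonnegative, so $\iota(D)$ is weakly above $\nu'$ and $\iota$ is well defined; it is plainly injective, and its image $U$ consists of precisely those height-$mn$ Dyck paths whose east steps all begin at a height divisible by $m$ (equivalently, whose north steps occur in blocks of $m$ starting at the heights $0, m, 2m, \ldots$).

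The core is a cover-by-cover comparison. First I would observe that, for $D \in U$, the indices $j$ at which $\Tup{\iota(D)}{j}$ is defined are exactly the first north steps of the blocks: a north step interior to a block is preceded by a north step, so no move is available there, whereas the first north step of block $i$ is preceded by an east step if and only if the $i$-th north step of $D$ is, i.e.\ if and only if $\Tup{D}{i}$ is defined. So the up-steps of $\iota(D)$ match the up-steps of $D$ one for one. Next I would identify the touch point: if $r_i^D$ is preceded by an east step and $t_i^D = (x_t, y_t)$, then the touch point in $\iota(D)$ of the copy of $r_i^D$ (the right hand point at the foot of block $i$) is exactly the copy $(x_t, m y_t)$ of $t_i^D$. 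Since horizontal distances agree at block boundaries this point has the correct horizontal value, and the only thing to check is that there is no earlier return to $\horiz[\nu]{r_i^D}$; this holds because strictly between $r_i^D$ and $t_i^D$ the horizontal distance stays at least $\horiz[\nu]{r_i^D}+1$, and inside each intervening block of $\iota(D)$ the horizontal distance climbs monotonically from the (already too large) value it has at the foot of that block. Consequently the Tamari move in $\mbbT_{mn}$ transports the east step to a block boundary, so it stays inside $U$ and yields precisely $\iota(\Tup{D}{i})$.

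These facts give the three assertions simultaneously. Every cover of an element $\iota(D)$ of $U$ is one of the moves just described and therefore again lies in $U$, so $U$ is closed upward, i.e.\ an upper ideal of $\mbbT_{mn}$; and for an upper ideal the induced covering relations are exactly the ambient covers with both endpoints in $U$. The bijection $\iota$ carries the covers $D \cover_T \Tup{D}{i}$ of $\mbbT_{n,m}$ onto exactly these covers, and conversely, so $\iota$ is an isomorphism of $\mbbT_{n,m}$ onto the subposet $U$ equipped with its induced order. This is the claim.

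I expect the touch-point identification to be the main obstacle. One must rule out that any interior point of an intervening block, or any block boundary strictly before $(x_t, m y_t)$, attains the horizontal distance $\horiz[\nu]{r_i^D}$; otherwise the Tamari move in $\mbbT_{mn}$ would insert the east step in the interior of a block and destroy the block structure, so that the image would fail to be closed and $\iota$ would fail to respect covers. The monotonicity of horizontal distance through a pure-north block, combined with the defining property of $t_i^D$ as the first return to $\horiz[\nu]{r_i^D}$, is exactly what closes this gap.
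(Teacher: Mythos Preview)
The paper does not give its own proof of this proposition; it is quoted from \cite[Proposition~4]{BFP-noIntTam} and used as a black box. So there is nothing to compare against here. That said, your argument is correct and complete, and it meshes well with the paper's horizontal-distance machinery.

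The crucial step---identifying $t_{(i-1)m+1}^{\iota(D)}$ with the image of $t_i^D$---is handled properly. The point you rely on, that strictly between $r_i^D$ and $t_i^D$ the horizontal distance stays strictly above $\horiz[\nu]{r_i^D}$, holds because east steps decrease the horizontal distance by exactly $1$ (so the value cannot jump past $\horiz[\nu]{r_i^D}$ on the way down without first hitting it). Combined with the observation that interior points of a pure-north block have horizontal distance strictly larger than the block's foot, this rules out any premature return and pins the touch point at a block boundary, so the move stays inside $U$ and equals $\iota(\Tup{D}{i})$.

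One small remark: when you say that for an upper ideal the induced covers are exactly the ambient covers with both endpoints in $U$, you are implicitly using that $U$ is upward closed (so any intermediate $z$ with $x < z < y$ is forced into $U$); you might make that one line explicit, since for arbitrary subposets it fails.
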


In the same paper, Bousquet-Mélou, Fusy and Préville-Ratelle \cite{BFP-noIntTam} define a distance function on Dyck paths which keeps track of the Tamari order.
We define this distance function next.

Given a Dyck path $D$ of size $mn$, let $\ell_D(r_i, t_i)$ denote the length of the path from $r_i$ to $t_i$ divided by two (so each pair of north and east steps count as $1$).
The \defn{touch distance function of $D$} is the function $d_D^t: [mn] \to [mn]$ such that $d_D^t(i) = \ell_D(r_i, t_i)$.
As we did with the horizontal distance, we will usually denote $d_D^t$ as a vector $d_D^t = (\ell_D(r_1, t_1) , \ell_D(r_2, t_2), \ldots, \ell_D(r_{mn}, t_{mn}))$.
Two distance functions are comparable by comparing each component.
In other words, $d_D^t \leq d_{D'}^t$ if and only if $d_D^t(i) \leq d_{D'}^t(i)$ for all $i$.
It was shown in \cite{BFP-noIntTam} that the touch distance function preserves the Tamari order.
\begin{proposition}[{\cite[Proposition 5]{BFP-noIntTam}}]
    \label{prop:tamari-iff-distance}
    Let $D$ and $D'$ be two Dyck paths of height $mn$.
    Then $D \leq_T D'$ if and only if $d_D^t \leq d_{D'}^t$.
\end{proposition}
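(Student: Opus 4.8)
The plan is to reduce everything to the covering relations of the $\nu$-Tamari order (here $\nu = (NE)^{mn}$, so that $\horiz{p} = y-x$ is the height of $p$ above the diagonal and $\ell_D(r_i^D, t_i^D)$ is the size of the primitive arch opened at $r_i^D$). The engine of the whole argument is a precise description of how a single cover $D \cover_T \Tup{D}{i}$ changes the vector $d_D^t$. The key claim is that \emph{a cover changes exactly one coordinate}. Concretely, let $k=k(i)$ be the unique index with $k<i$ and $t_k^D = r_i^D$ — that is, $r_k^D$ opens the arch that closes at $r_i^D$ (uniqueness holds because $r_i^D$ is reached by a descending east step to level $\horiz{r_i^D}$, so precisely one earlier right hand point lies at that level with the path staying strictly above in between). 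Then $d_{\Tup{D}{i}}^t(\ell) = d_D^t(\ell)$ for every $\ell \neq k$, while $d_{\Tup{D}{i}}^t(k) = d_D^t(k) + d_D^t(i)$.

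To prove this I would track the height function under $\Tup{D}{i}$. Writing $D = dEtf$ and $\Tup{D}{i} = dtEf$, the points of $d$ and $f$ keep their positions while the segment $t = D_{[r_i^D,t_i^D]}$ is shifted one step west, raising the height of each of its interior points by one. Hence for $\ell \geq i$ with $r_\ell^D$ inside $t$ the local picture is merely translated and $d^t(\ell)$ is unchanged (including $\ell=i$, whose touch point simply moves to the west-shift of $t_i^D$); for $\ell < i$ with $t_\ell^D$ inside $d$ or inside $f$ nothing moves relative to $r_\ell^D$; and the only new phenomenon is at $\ell=k$, where the east step that used to land on $r_i^D$ is gone, so the first return of $r_k^D$ to its level is postponed from $r_i^D$ to $t_i^D$, lengthening its arch by exactly $\ell_D(r_i^D,t_i^D) = d_D^t(i)$. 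In particular every coordinate weakly increases and $\sum_\ell d_D^t(\ell)$ strictly increases, so transitivity immediately gives the forward implication: $D \leq_T D'$ implies $d_D^t \leq d_{D'}^t$.

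For the converse I would induct on $\sum_\ell \bigl(d_{D'}^t(\ell) - d_D^t(\ell)\bigr) \geq 0$. If this is $0$ then $d_D^t = d_{D'}^t$, and since the encoding $D \mapsto d_D^t$ is injective (the arch structure, equivalently $\laf_D$, can be read back off $d^t$), we conclude $D = D'$. Otherwise $D \neq D'$ and it suffices to produce one cover $D \cover_T \Tup{D}{i}$ with $d_{\Tup{D}{i}}^t \leq d_{D'}^t$; the induction hypothesis applied to $\Tup{D}{i} \leq_T D'$ then yields $D <_T \Tup{D}{i} \leq_T D'$. Equivalently one may descend from $D'$, finding a down-cover $\Tdown{D'}{i}$ (which by \autoref{lem:down-cover} exists exactly when $t_i^{D'} = h_i^{D'}$ and $\horiz{r_i^{D'}} \neq 0$) whose single decreased coordinate still dominates the corresponding coordinate of $d_D^t$.

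The main obstacle is exactly this move-existence step: guaranteeing that among the available covers there is one whose unique affected coordinate does not overshoot the target vector. The natural candidate is dictated by the largest index $k^\ast$ with $d_D^t(k^\ast) < d_{D'}^t(k^\ast)$: one wants to raise coordinate $k^\ast$, which forces the cover at the row $i$ of the touch point $t_{k^\ast}^D$ (so that $k(i) = k^\ast$). The two things to check are that $\Tup{D}{i}$ is defined — i.e.\ that $t_{k^\ast}^D$ is preceded by an east step — and that the increment $d_D^t(i) = d_{D'}^t(i)$ (equal since $i > k^\ast$) does not push $d_D^t(k^\ast)$ past $d_{D'}^t(k^\ast)$. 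Both should be extracted from the admissibility constraints a vector of the form $d^t$ must satisfy, combined with the hypothesis $d_D^t \leq d_{D'}^t$; when the first candidate overshoots, one reselects via the descending formulation, which exhibits the same single-coordinate behaviour. This bookkeeping with the valid-vector conditions is the crux, and is precisely the content of \cite[Proposition 5]{BFP-noIntTam}.
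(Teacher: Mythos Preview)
The paper does not prove this proposition; it is quoted with attribution to \cite[Proposition~5]{BFP-noIntTam} and used thereafter as a black box. There is therefore no in-paper proof to compare your attempt against.

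On its own merits, your sketch is essentially the standard argument. The forward direction is complete: your single-coordinate description of how a cover $D \cover_T \Tup{D}{i}$ alters $d^t$ is correct, and the existence and uniqueness of the index $k$ with $t_k^D = r_i^D$ follow exactly as you say (any touch point is reached by an east step, and two distinct right-hand points at the same height cannot share a first return). Hence $d^t$ weakly increases along chains. For the converse you set up the right induction and correctly isolate the crux --- producing a cover whose unique affected coordinate does not overshoot --- and then explicitly defer that crux back to \cite{BFP-noIntTam}. That is honest, but it means your proposal is a plan rather than a self-contained proof. One wrinkle worth naming: your candidate move at ``the row $i$ of $t_{k^\ast}^D$'' only makes sense when $t_{k^\ast}^D$ is followed by a north step (so that $t_{k^\ast}^D = r_i^D$ for some $i$); when $t_{k^\ast}^D = h_{k^\ast}^D$ is followed by an east step no such $i$ exists, and it is exactly here that one must either switch to the descending formulation via \autoref{lem:down-cover} or invoke the recursive characterisation of valid $d^t$-vectors from \cite{BFP-noIntTam}.
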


Since the greedy order uses hit points and not touch points, we define a new distance function on Dyck paths and show how it is related to the greedy order.
The \defn{hit distance function of $D$} is the function $d_D^h: [mn] \to [mn]$ such that $d_D^h(i) = \ell_D(r_i, h_i)$.
As with the touch distance function, we will usually denote $d_D^h$ as the vector $d_D^h = \left( \ell_D(r_1, h_1), \ldots, \ell_D(r_{mn}, h_{mn}) \right)$.
\begin{example}
    As an example, suppose we have the following Dyck path $D$ from before.
    \begin{center}
        \begin{tikzpicture}[scale=0.5]
            \draw[dotted] (0, 0) grid (6, 6);
            \draw[rounded corners=1, color=BrightRed, ultra thick] (0, 0) -- (0, 1) -- (1, 1) -- (1,2) -- (2, 2) -- (2, 3) -- (3, 3) -- (3, 4) -- (4, 4) -- (4, 5) -- (5, 5) -- (5, 6) -- (6,6);
            \draw[rounded corners=1, color=Black, ultra thick] (0,0) -- (0,4) -- (2,4) -- (2,6) -- (6,6);
        \end{tikzpicture}
    \end{center}
    Then $d_D^t = (6, 5, 2, 1, 2, 1)$ and $d_D^h = (6, 5, 4, 1, 2, 1)$.
\end{example}

Although we would like to say that the hit distance function preserves the greedy order, it turns out to be slightly more complex than that.
\begin{proposition}
    \label{prop:greedy-distance}
    Let $D$ and $D'$ be Dyck paths of height $mn$.
    Then $D \leq_G D'$ if and only if $d_D^t \leq d_{D'}^t$ and $d_D^{h} \leq d_{D'}^h$
\end{proposition}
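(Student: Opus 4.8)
The plan is to prove both implications by reducing to single cover relations and leaning on \autoref{prop:tamari-iff-distance}, which already identifies the touch distance function with the Tamari order. The guiding principle is that the greedy order is a \emph{refinement} of the Tamari order: a greedy cover moves an east step all the way to the hit point, whereas a Tamari cover only moves it to the touch point, so a single greedy cover should decompose into a chain of Tamari covers. Throughout, both sides of the claimed equivalence are transitive (the vector inequalities compose, and $\leq_G$ is the transitive closure of its covers), so it suffices to analyse a single cover $D \cover_G \Gup{D}{i}$.

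For the forward direction, suppose $D \leq_G D'$ and consider one cover $D \cover_G \Gup{D}{i}$. First I would show $D \leq_T \Gup{D}{i}$, so that $d_D^t \leq d_{\Gup{D}{i}}^t$ follows from \autoref{prop:tamari-iff-distance}. To see this, list the successive points $r_i = p_0, p_1, \ldots, p_s = h_i$ occurring after $r_i$ with horizontal distance $\horiz{r_i}$, and let $E_\ell$ be the path with the relocated east step sitting just after $p_\ell$, so that $E_0 = D$ and $E_s = \Gup{D}{i}$. Then $E_0 \cover_T E_1 = \Tup{D}{i}$, and each subsequent $E_\ell \cover_T E_{\ell+1}$ is a Tamari cover at the row immediately above $p_\ell$ (by \autoref{rem:touch_to_hit} the relevant touch point at that row is exactly $p_{\ell+1}$). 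This exhibits $\Gup{D}{i}$ as the top of a Tamari chain, giving the touch inequality by transitivity.

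It remains, for the forward direction, to verify $d_D^h \leq d_{\Gup{D}{i}}^h$ directly. Relocating the east step from before $r_i$ to after $h_i$ shifts the entire subpath $D_{[r_i, h_i]}$ one unit west, weakly increasing the horizontal distance of every point on it while leaving the prefix $d$ and suffix $f$ fixed. Since a hit point $h_k$ is the last return to the level $\horiz{r_k}$ that is followed by an east step, a west-shift of an intervening block can only move a hit point later or leave it fixed, so $\ell(r_k, h_k)$ never decreases. A short case analysis according to whether $k < i$, whether $k$ lies in the rows spanned by $D_{[r_i, h_i]}$, or whether $k$ lies beyond $h_i$ completes this step, which I expect to be routine bookkeeping.

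For the converse I would induct on $\sum_k \bigl( d_{D'}^t(k) - d_D^t(k) \bigr)$, which is nonnegative and, by antisymmetry of $\leq_T$ together with \autoref{prop:tamari-iff-distance}, vanishes exactly when $D = D'$. When $D \neq D'$ the task is to produce a single greedy cover $D \cover_G \Gup{D}{i}$ satisfying $d_{\Gup{D}{i}}^t \leq d_{D'}^t$ and $d_{\Gup{D}{i}}^h \leq d_{D'}^h$; this strictly decreases the inductive quantity and lets us conclude $\Gup{D}{i} \leq_G D'$, hence $D \leq_G D'$. The delicate point is the choice of $i$, and it is the main obstacle. A greedy cover at $i$ raises the touch distance not only at row $i$ but at every climbing row along the ascent to $h_i$, so choosing the naive maximal row with $d_D^t(i) < d_{D'}^t(i)$ overshoots at the rows above it where $D$ and $D'$ already agree. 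The resolution is to use the hit hypothesis $d_D^h \leq d_{D'}^h$ to select a row $i$ at which $D'$ itself has coinciding touch and hit points; there the greedy ascent of $D$ lands weakly below $D'$ in \emph{both} distance functions, since $d_D^h(i) \leq d_{D'}^h(i) = d_{D'}^t(i)$ bounds the overshoot exactly. Showing that such an $i$ always exists, and that the greedy step it defines respects both vector inequalities simultaneously, is precisely where the two hypotheses must be used together, and explains why a single distance function cannot capture $\leq_G$.
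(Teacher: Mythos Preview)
Your forward direction is fine and close in spirit to the paper's: you factor a greedy cover as a chain of Tamari covers to get $d^t$ monotone, then argue $d^h$ is monotone by a case split. The paper instead records, in \autoref{lem:greedy-distance-cover}, explicit closed formulas for $d_{\Gup{D}{i}}^t(j)$ and $d_{\Gup{D}{i}}^h(j)$ in terms of $d_D^t(j)$ and $d_D^h(j)$ (only the rows $j$ with $t_j^D = r_i^D$, respectively $h_j^D = x_i^D$, change, and they jump by $\ell_D(r_i^D,h_i^D)$). Either route gives the forward implication; the paper's formulas have the advantage of being reusable in the converse.

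The gap is in your converse. The entire argument hinges on the choice of the row $i$ at which to perform a greedy cover on $D$, and your proposed criterion --- a row where $t_i^{D'} = h_i^{D'}$ --- does not do the job. First, nothing you wrote guarantees that such a row has $r_i^D$ preceded by an east step, so $\Gup{D}{i}$ may not even be defined. Second, even granting that, the inequality $d_D^h(i) \le d_{D'}^h(i) = d_{D'}^t(i)$ controls only the $i$-th coordinate; by the formulas above a greedy cover also inflates $d^t(j)$ at every $j$ with $t_j^D = r_i^D$ and $d^h(j)$ at every $j$ with $h_j^D = x_i^D$, and your criterion says nothing about those rows. You acknowledge this is ``precisely where the two hypotheses must be used together'', but you have not actually used them.

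The paper's resolution (\autoref{lem:dist-imp-greedy}) is to choose $i$ differently: take the \emph{first} row at which the paths $D$ and $D'$ diverge, i.e.\ $r_{i-1}^D = r_{i-1}^{D'}$ but $r_i^D \neq r_i^{D'}$. This immediately forces an east step before $r_i^D$ (since $D$ lies weakly below $D'$), and more importantly gives $r_j^D = r_j^{D'}$ for every $j<i$, so the only coordinates that change under the greedy cover are ones where $D$ and $D'$ share the right-hand point. With that in hand, the two offending cases $t_j^D = r_i^D$ and $h_j^D = x_i^D$ are each dispatched by a short contradiction showing $d_{D'}^h(i) < d_D^h(i)$ would follow otherwise. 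The induction is on $\lVert d_{D'}^t - d_D^t\rVert + \lVert d_{D'}^h - d_D^h\rVert$ rather than the touch sum alone, though your variant would also terminate since a greedy cover is a nontrivial Tamari ascent.
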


We prove \autoref{prop:greedy-distance} by showing each direction independently.
For the forward direction, we study what happens in cover relations which then extends naturally to the order itself.
\begin{lemma}
    \label{lem:greedy-distance-cover}
    Let $D$ be a Dyck path of height $mn$ and let $\Gup{D}{i}$ be the Dyck path of height $mn$ obtained from $D$ by moving the east step before $r_i^D$ to after $h_i^D$, \ie $D \cover_G \Gup{D}{i}$.
    Then
    \begin{align*}
        d_{\Gup{D}{i}}^t(j) &= \begin{cases}
            d_{D}^t(j)& \text{if } t_j^D \neq r_i^D,\\
            d_{D}^t(j) + \ell_D(r_i^D, h_i^D)& \text{if } t_j^D = r_i^D,
        \end{cases}\\
        d_{\Gup{D}{i}}^h(j) &= \begin{cases}
            d_{D}^h(j)& \text{if } h_j^D \neq x_i^D,\\
            d_{D}^h(j) + \ell_D(r_i^D, h_i^D)& \text{if } h_j^D = x_i^D,
        \end{cases}
    \end{align*}
    where $x_i^D$ is the point before $r_i^D$.
\end{lemma}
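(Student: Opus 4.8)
The plan is to track precisely how the single cover $D \cover_G \Gup{D}{i}$ moves the points of the path and then to read off the effect on the two distance vectors. Writing $D = dEhf$ and $\Gup{D}{i} = dhEf$ as in the definition and setting $\ell := \ell_D(r_i^D, h_i^D) = d_D^h(i)$, the move deletes the east step sitting immediately before $r_i^D$ and reinserts it immediately after $h_i^D$. First I would record the resulting dictionary of points: every point of $\Gup{D}{i}$ lying strictly between $x_i^D$ and $h_i^D$ (the interior of the subword $h$, namely the old segment from $r_i^D$ to $h_i^D$ shifted one unit west) has horizontal distance one larger than the corresponding point of $D$, while every point of $d$ up to and including $x_i^D$, and every point of $f$ from $h_i^D$ onward, keeps its coordinates. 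Alongside this I would record two facts used repeatedly: (i) strictly between $r_i^D$ and $t_i^D$ every point has horizontal distance exceeding $\horiz{r_i^D}$, and between $r_i^D$ and $h_i^D$ the only points of horizontal distance $\horiz{r_i^D}$ are the successive touch points of row $i$, each followed by a north step, together with $h_i^D$, which is followed by the relocated east step; and (ii) because the move relocates one east step and alters nothing else, the step-count between any two points that are both fixed, or that both shift west in tandem, is preserved.

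For the touch distances I would show the only affected rows are those with $t_j^D = r_i^D$. For such a $j$ the point $r_j^D$ lies in $d$ and is fixed, while its old touch point $r_i^D$ acquires horizontal distance $\horiz{r_i^D}+1$; by (i) the shifted block then contains no point of horizontal distance $\horiz{r_j^D} = \horiz{r_i^D}$, so the first return to that value becomes the fixed point $h_i^D$, giving $t_j^{\Gup{D}{i}} = h_i^D$ and, by counting, $d_{\Gup{D}{i}}^t(j) = d_D^t(j) + \ell$. For every remaining $j$ I would classify by the location of $r_j^D$: if $r_j^D$ lies in the block then $t_j^D$ also lies in the block (the path must descend to the valley floor $\horiz{r_i^D}$ before leaving the block, so it returns to level $\horiz{r_j^D}$ inside it) and $r_j^D, t_j^D$ shift west together; if $r_j^D$ lies in $f$ then $t_j^D$ lies in $f$ and both are fixed. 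The delicate case is $r_j^D \in d$ with $t_j^D \in f$: here I would argue $\horiz{r_j^D} < \horiz{r_i^D}$, since otherwise the descent of the path into the valley of the block would cross level $\horiz{r_j^D}$ before the block and produce an earlier return; consequently the shifted block, all of whose horizontal distances are at least $\horiz{r_i^D}+1$, never meets $\horiz{r_j^D}$, so $t_j^{\Gup{D}{i}}$ remains the same point of $f$. In all these cases (ii) yields $d_{\Gup{D}{i}}^t(j) = d_D^t(j)$.

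The hit-distance statement follows the same template with $x_i^D$ in the role of $r_i^D$. The feature separating a hit point from a touch point is that it is followed by an east step, and the unique point to lose this feature under the move is $x_i^D$, which is followed by the relocated east step in $D$ but by the first (north) step of the shifted block in $\Gup{D}{i}$. Hence a row with $h_j^D = x_i^D$ (which forces $r_j^D \in d$ and $\horiz{r_j^D} = \horiz{r_i^D}+1$) loses its hit point; by (i) the next point of horizontal distance $\horiz{r_i^D}+1$ that is followed by an east step is the shifted copy $h_i^D - (1,0)$, whose following step is exactly the relocated east step, so $h_j^{\Gup{D}{i}} = h_i^D - (1,0)$ and $d_{\Gup{D}{i}}^h(j) = d_D^h(j) + \ell$. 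For every other row the hit point is either fixed or shifts west in tandem with $r_j^D$ — the same classification as for touch distances, using that a hit point must occur at a descending crossing to rule out $h_j^D$ landing strictly inside the block when $r_j^D \in d$ — and (ii) again gives invariance.

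The main obstacle is bookkeeping rather than insight: in the unchanged cases one must be sure the westward shift of the block neither manufactures a spurious earlier touch or hit point nor disturbs the enclosed step-count. I would keep this in check by fixing the point-dictionary and facts (i)--(ii) at the outset and phrasing every subsequent claim in terms of horizontal distances, so that each verification reduces to the single observation that moving one east step across the block leaves every other step-count intact.
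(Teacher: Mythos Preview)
Your proof is correct and follows essentially the same approach as the paper, which in fact offers only a diagram and the sentence ``This lemma is proved using the following diagram showing the cover relation'' in lieu of a written argument. Your explicit point-dictionary together with facts (i) and (ii) is exactly the bookkeeping the figure is meant to convey, and your case analysis (by the location of $r_j^D$ relative to the block $[r_i^D,h_i^D]$, and by whether $t_j^D=r_i^D$ or $h_j^D=x_i^D$) fills in the details the paper leaves to the reader.
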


This lemma is proved using the following diagram showing the cover relation.
\begin{figure}[h]
    \begin{center}
        \begin{tikzpicture}[thick,scale=0.8
    ]
    \draw (0,0) to[out=90, in=180] (1,1);
    \draw (1,1) to[out=90, in=180] (2,3);
    \draw (2,3) to[out=90, in=180] (3,4);
    \draw (3,4) -- (4, 4);
    \node at (3.5, 4.25) {$E$};
    \draw (4,4) to[out=90, in=180] (5,5);
    \draw (5,5) to[out=90, in=180] (6,6);
    \draw (6,6) -- (7,6);
    \draw (7,6) to[out=90, in=180] (9,7);

    \draw[dashed] (1,1) -- (6,6);
    \draw[dashed] (2,3) -- (3,4);

    \draw (4.75, 3.25) -- (5, 3) -- (7, 5) -- (6.75, 5.25);
    \node at (6.5, 3.5) {$\ell_D(r_i^D, h_i^D)$};

    \draw[red] (3,4) to[out=90, in=180] (4,5);
    \draw[red] (4,5) to[out=90, in=180] (5,6);
    \draw[red] (5,6) -- (6,6);
    \node at (5.5, 6.25) {$E$};
    \draw[red, dashed] (3,4) -- (5,6);
    

    \fill (0,0) circle[radius=2pt];
    \fill (1,1) circle[radius=2pt];
    \fill (2,3) circle[radius=2pt];
    \fill (3,4) circle[radius=2pt] node[below] {$x_i^D$};
    \fill (4,4) circle[radius=2pt] node[below right] {$r_i^D$};
    \fill (5,5) circle[radius=2pt] node[below right] {$t_i^D$};
    \fill (6,6) circle[radius=2pt] node[below right] {$h_i^D$};
    \fill (7,6) circle[radius=2pt];
    \fill (9,7) circle[radius=2pt];
    \fill[red] (4,5) circle[radius=2pt];
    \fill[red] (5,6) circle[radius=2pt];

\end{tikzpicture}
        \caption{In this figure there are two paths, $D$ (below) and $\Gup{D}{i}$ (above) where $D \cover_G \Gup{D}{i}$. The two paths coincide on every point before $x_i^D$ and after $h_i^D$.}
        \label{fig:greedy-distance-cover}
    \end{center}
\end{figure}
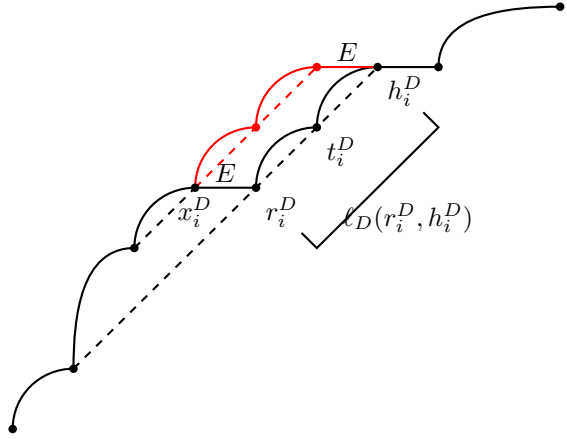

Since our cover relations always increase both distances, it is clear that this extends naturally to show the forward direction of our proposition.
We next show the reverse direction.
\begin{lemma}
    \label{lem:dist-imp-greedy}
    Let $D$ and $D'$ be Dyck paths of height $mn$.
    If $d_D^t \leq d_{D'}^t$ and $d_D^{h} \leq d_{D'}^h$ then $D \leq_G D'$.
\end{lemma}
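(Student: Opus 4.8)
The plan is to upgrade the Tamari comparison that comes for free from the touch hypothesis into a genuine greedy chain, built one greedy cover at a time. First I would note that since $d_D^t \leq d_{D'}^t$, \autoref{prop:tamari-iff-distance} already yields $D \leq_T D'$; moreover the assignment $D \mapsto d_D^t$ is injective (if $d_D^t = d_{D'}^t$ then $D \leq_T D'$ and $D' \leq_T D$, forcing $D = D'$), so $D = D'$ precisely when the nonnegative integer $\Delta(D,D') = \sum_i \left( d_{D'}^t(i) - d_D^t(i) \right)$ vanishes. This sets up an induction on $\Delta(D,D')$, with the base case $\Delta = 0$ being the trivial equality $D = D'$.

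For the inductive step I would assume $D \neq D'$ and produce a single greedy cover $D \cover_G \Gup{D}{i}$ whose image still lies weakly below $D'$ in both distance vectors, that is $d_{\Gup{D}{i}}^t \leq d_{D'}^t$ and $d_{\Gup{D}{i}}^h \leq d_{D'}^h$. To locate such an $i$ I would examine an extremal index among the coordinates where $D$ strictly lags behind $D'$, chosen so that $r_i^D$ is preceded by an east step and hence the cover $\Gup{D}{i}$ is defined. \autoref{lem:greedy-distance-cover} then pins down the effect of the cover exactly: it raises $d^t(j)$ by $\ell_D(r_i^D, h_i^D)$ on precisely the rows $j$ with $t_j^D = r_i^D$, and raises $d^h(j)$ by the same amount on precisely the rows $j$ with $h_j^D = x_i^D$ (where $x_i^D$ is the point before $r_i^D$), leaving every other coordinate untouched. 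Since $D \cover_G \Gup{D}{i}$ implies $D <_T \Gup{D}{i}$, we get $d_D^t \leq d_{\Gup{D}{i}}^t$ with strict increase in some coordinate, so once the cover is shown admissible the quantity $\Delta$ strictly decreases and the induction hypothesis applies to the pair $\Gup{D}{i}, D'$, giving $\Gup{D}{i} \leq_G D'$ and hence $D \leq_G D'$.

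The crux, and the step I expect to be the main obstacle, is verifying that the chosen greedy jump does not overshoot: after adding $\ell_D(r_i^D, h_i^D)$ to the finitely many affected coordinates, both vectors must remain $\leq d_{D'}^t$ and $\leq d_{D'}^h$. This is exactly where the hit hypothesis $d_D^h \leq d_{D'}^h$ carries the weight that the touch hypothesis cannot. Because a greedy move transports the east step all the way to the hit point $h_i^D$ rather than merely to the touch point $t_i^D$ (as a Tamari move would), the length of an admissible jump is governed by how far the hit structure of $D'$ extends past that of $D$, which is precisely what $d_D^h \leq d_{D'}^h$ records; the touch condition alone only licenses a safe Tamari cover, which is why \autoref{prop:tamari-iff-distance} by itself is insufficient here. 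I would therefore carry out the no-overshoot check by combining \autoref{lem:greedy-distance-cover} with the two hypotheses at the affected rows, making the right extremal choice of $i$ so that both the touch coordinates (which have room since $D \leq_T D'$) and the hit coordinates stay bounded by $D'$. Closing this verification completes \autoref{lem:dist-imp-greedy}, and together with the cover computation of \autoref{lem:greedy-distance-cover} it establishes the reverse direction of \autoref{prop:greedy-distance}.
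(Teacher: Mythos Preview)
Your overall architecture matches the paper's proof: induct on a nonnegative gap between the distance vectors, find a row $i$ at which a greedy cover $D \cover_G \Gup{D}{i}$ is legal, invoke \autoref{lem:greedy-distance-cover} to see exactly which coordinates move, and then verify that neither $d^t$ nor $d^h$ overshoots $D'$. The paper inducts on $\norm{d_{D'}^t - d_D^t} + \norm{d_{D'}^h - d_D^h}$ rather than your $\Delta$, but your variant is fine since a greedy cover never decreases any $d^t$ coordinate and strictly increases at least one.

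The genuine gap is in the two places you flag but do not resolve. First, your choice of $i$ is left as ``an extremal index \ldots chosen so that $r_i^D$ is preceded by an east step''; the paper makes the specific choice of the \emph{first row where $D$ and $D'$ diverge}, i.e.\ $r_{i-1}^D = r_{i-1}^{D'}$ but $r_i^D \neq r_i^{D'}$. This choice is not cosmetic: it guarantees $r_j^D = r_j^{D'}$ for every $j<i$, and that equality is exactly what powers the no-overshoot argument in both affected cases. Second, you identify the no-overshoot check as the crux but do not carry it out. The paper's argument is a short contradiction in each case: if, say, $t_j^D = r_i^D$ and the $j$-th touch coordinate overshoots, then using $r_j^D = r_j^{D'}$ one locates $t_j^{D'}$ strictly between $r_i^D$ and $h_i^D$ (diagonally), which forces $h_i^{D'}$ to sit strictly before $h_i^D$ and hence $d_{D'}^h(i) < d_D^h(i)$, contradicting $d_D^h \leq d_{D'}^h$. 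The $h_j^D = x_i^D$ case is analogous. Without the specific choice of $i$ you cannot anchor $r_j^D = r_j^{D'}$, and without that anchor the contradiction does not go through; so as written your sketch stops precisely where the real work begins.
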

\begin{proof}
    We first note that, since $d_D^t \leq d_{D'}^t$, then by \autoref{prop:tamari-iff-distance} $D \leq_T D'$ and $D$ is weakly below $D'$ in terms of paths, \ie the path of $D$ is weakly between $(NE)^{mn}$ and $D'$.
    We proceed by strong induction on $\norm{d_{D'}^t - d_{D}^t} + \norm{d_{D'}^h - d_D^h}$ where $\norm{(a_1, \ldots, a_k)} = \abs{a_1} + \cdots + \abs{a_k}$.
    First note that if $\norm{d_{D'}^t - d_{D}^t} = 0$ then $D' = D$ by \autoref{prop:tamari-iff-distance} and therefore the base case is handled.

    Let $i$ be the first row in which $D'$ and $D$ no longer coincide.
    In other words, $r_{i-1}^D = r_{i-1}^{D'}$ and $r_{i}^D \neq r_{i}^{D'}$.
    Let $\Gup{D}{i}$ be the Dyck path of height $mn$ obtained from $D$ by moving the east step before $r_{i}^D$ to after the hit point $h_{i}^D$, \ie we go up in the greedy order.
    By induction, it suffices to show $d_{\Gup{D}{i}}^t \leq d_{D'}^t$ and $d_{\Gup{D}{i}}^h \leq d_{D'}^h$; which then implies $D \leq_G \Gup{D}{i}\leq_G D'$ as desired.

    By \autoref{lem:greedy-distance-cover} we have
    \begin{align*}
        d_{\Gup{D}{i}}^t(j) &= \begin{cases}
            d_{D}^t(j)& \text{if } t_j^D \neq r_i^D,\\
            d_{D}^t(j) + \ell_D(r_i^D, h_i^D)& \text{if } t_j^D = r_i^D,
        \end{cases}\\
        d_{\Gup{D}{i}}^h(j) &= \begin{cases}
            d_{D}^h(j)& \text{if } h_j^D \neq x_i^D,\\
            d_{D}^h(j) + \ell_D(r_i^D, h_i^D)& \text{if } h_j^D = x_i^D,
        \end{cases}
    \end{align*}
    where $x_i^D$ is the point before $r_i^D$.
    Since $d_D^t \leq d_{D'}^t$ and $d_D^h \leq d_{D'}^h$ it suffices to show $d_{\Gup{D}{i}}^t(j) \leq d_{D'}^t (j)$ and $d_{\Gup{D}{i}}^h(j) \leq d_{D'}^h(j)$ for $j \in [mn]$ where the component changes.
    We handle each of the cases separately where in both cases we have $j < i$.

    \textbf{$\b{t_j^D = r_i^D}$:} In this case we have $d_{\Gup{D}{i}}^t(j) = d_{D}^t(j) + \ell_D(r_i^D, h_i^D)$.
        Let us suppose contrarily that $d_{D'}^t(j) < d_{\Gup{D}{i}}^t(j)$.
        Therefore $t_j^{D'}$ comes strictly (diagonally) before $t_j^{\Gup{D}{i}} = h_i^{D}$ as $r_j^D = r_j^{D'} = r_{j}^{\Gup{D}{i}}$.
        Furthermore, $r_i^{D} \neq r_i^{D'}$ and $D \leq_T D'$ implies that $\horiz[\nu]{} < \horiz[\nu]{r_i^{D'}}$ and thus $t_j^{D'}$ comes strictly (diagonally) after $r_i^D$.
        Since $\horiz[\nu]{r_j^{D'}} = \horiz[\nu]{r_i^D} < \horiz[\nu]{r_i^{D'}}$ and since $t_j^{D'}$ is strictly between $r_i^{D}$ and $h_i^{D}$, then $h_i^{D'}$ must be strictly (diagonally) between $r_i^D$ and $h_i^D$ (shifted over to the west).
        In other words, the number of north steps between $r_i^{D'}$ and $h_i^{D'}$ is strictly less than the number of north steps between $r_i^D$ and $h_i^D$.
        But since length is always equal to the number of north steps, this implies $d_{D'}^h(i) < d_{D}^h(i)$ contradicting our initial assumptions.

    \textbf{$\b{h_{j}^D = x_{i}^D}$:} In this case we have $d_{D_i}^h(j) = d_{D}^h(j) + \ell_{D}(r_i^D, h_i^D)$.
        We proceed by a similar argument as in the last case.
        Let us suppose contrarily that $d_{D'}^h(j) < d_{\Gup{D}{i}}^h(j)$.
        Therefore $h_j^{D'}$ comes strictly (diagonally) before $h_j^{\Gup{D}{i}} = h_i^{\Gup{D}{i}}$ as $r_j^D = r_j^{D'} = r_j^{\Gup{D}{i}}$.
        Since $r_i^{D'} \neq r_i^D$ then $\horiz[\nu]{r_i^{D'}} \geq \horiz[\nu]{} +1 = \horiz[\nu]{} = \horiz[\nu]{r_j^{D'}}$.
        In particular, these two facts imply that $h_i^{D'}$ is strictly (diagonally) before $h_i^{\Gup{D}{i}}$.
        Furthermore, since $d_D^h(i) = d_{\Gup{D}{i}}^h(i)$, then the number of north steps between $r_i^{D'}$ and $h_i^{D'}$ is strictly less than the number of north steps between $r_i^D$ and $h_i^D$.
        Therefore $d_{D'}^h(i) < d_{D}^h(i)$ which is a contradiction.

    Therefore we have that $d_{\Gup{D}{i}}^t \leq d_{D'}^t$ and $d_{\Gup{D}{i}}^h \leq d_{D'}^h$.
    Since at least one of $\norm{d_{D'}^t - d_{\Gup{D}{i}}^t}$ and $\norm{d_{D'}^h - d_{\Gup{D}{i}}^h}$ is strictly less than their counterparts, inductively, we know that $\Gup{D}{i} \leq_G D'$.
    Furthermore, since $D \cover_G \Gup{D}{i}$ we have $D \leq_G D'$ as desired.
\end{proof}

\begin{proof}[Proof of \autoref{prop:greedy-distance}]
    This is a direct result of \autoref{lem:greedy-distance-cover} and \autoref{lem:dist-imp-greedy}.
\end{proof}

Finally, before proving that $\Tinnm$ implies $\mbbG_{n, m-1}$, we define a way to describe $\phi$ in $\mcD_{mn}$.
Let $\pi$ be the poset isomorphism between $\mcT_{n,m}$ and the associated poset ideal in $\mcT_{mn}$ given at the beginning of the section.
Note that $\pi$ preserves touch and hit points.
Recall that $\phi$ is a (set) bijection between $\Dinnm$ and $\mcD_{n,m-1}$.

For $D$ a Dyck path of height $mn$ (in $\pi(\Dinnm)$), we let $\bar{\phi} = \pi \circ \phi \circ \pi^{-1}$.
In particular, $\pi^{-1}$ removes $m-1$ north steps for every string of $m$ north steps.
Then $\phi$ removes the east step after $h_i$ for all $i > 1$.
Finally, $\pi$ adds $m-2$ north steps for every north step.
Combining this together (and using the algorithm for obtaining $\phi$), we let $D^0 = D$ and let $D^{i+1}$ be the Dyck of height $mn$ obtained from $D^{i}$ by taking the east step after $h_{im + 1}^{D^{i}}$ (or the final east step) and placing them at the end of $D^{i}$ (north before east).
Removing the final $E^n$ at the end of $D^{n}$ and one $N$ step from each $m$ $N$ steps gives us $\bar{\phi}(D)$.

\begin{example}
    Let $D \in \pi(\Dinnm)$ be the following Dyck path of height $3 \cdot 4$ where $m = 3,\,n=4$.

    ~
    \begin{center}
        \begin{tikzpicture}[scale=0.4]
            \node at (-2.5,6) {$D = $};
            \draw[dotted] (0, 0) grid (12, 12);
            \draw[rounded corners=1, color=BrightRed, line width=1.5] (0, 0) -- (0, 1) -- (1, 1) -- (1, 2)-- (2, 2) -- (2,3) -- (3,3) -- (3,4) -- (4, 4) -- (4,5) -- (5,5) -- (5,6) -- (6,6) -- (6,7) -- (7,7) -- (7,8) -- (8,8) -- (8,9) -- (9,9) -- (9,10) -- (10,10) -- (10,11) -- (11,11) -- (11, 12) -- (12,12);
            \draw[rounded corners=1, color=Black, line width=1.5] (0,0) -- (0,9) -- (5,9) -- (5, 12) -- (12,12);
            \fill (0,0) circle[radius=3pt];
            \fill (0,0) circle[radius=3pt] node[left] {$r_1$};
            \fill (0,3) circle[radius=3pt];
            \fill (0,3) circle[radius=3pt] node[left] {$r_4$};
            \fill (0,6) circle[radius=3pt];
            \fill (0,6) circle[radius=3pt] node[left] {$r_7$};
            \fill (5,9) circle[radius=3pt];
            \fill (5,9) circle[radius=3pt] node[above left] {$r_{10}$};
            \draw[dashed] (0,0) -- (12,12);
            \fill (12,12) circle[radius=3pt];
            \draw[dashed] (0,3) -- (9,12);
            \fill (9,12) circle[radius=3pt];
            \draw[dashed] (0,6) -- (3,9);
            \fill (3,9) circle[radius=3pt];
            \draw[dashed] (5,9) -- (8,12);
            \fill (8,12) circle[radius=3pt];
        \end{tikzpicture}
    \end{center}
    In the image, the dashed lines represent points with equal horizontal distance for $i \in [n]$ at the points $(i-1)m  + 1$.

    We next describe where $D$ is sent by $\pi \circ \phi \circ \pi^{-1}$.

    ~
    \begin{center}
        \begin{tikzpicture}[scale=0.6]
            \begin{scope}[shift={(0,2)}]
                \node at (0,0) {$D \xrightarrow{\pi^{-1}}$};
            \end{scope}
            \begin{scope}[shift={(3,0)}]
                \draw[dotted] (0, 0) grid (12, 4);
                \draw[rounded corners=1, color=BrightRed, line width=1.5] (0, 0) -- (0, 1) -- (3, 1) -- (3, 2)-- (6, 2) -- (6,3) -- (9,3) -- (9,4) -- (12, 4);
                \draw[rounded corners=1, color=Black, line width=1.5] (0,0) -- (0,3) -- (5,3) -- (5, 4) -- (12,4);
                \fill (0,0) circle[radius=3pt];
                \fill (0,0) circle[radius=3pt] node[left] {$r_1$};
                \fill (0,1) circle[radius=3pt];
                \fill (0,1) circle[radius=3pt] node[left] {$r_2$};
                \fill (0,2) circle[radius=3pt];
                \fill (0,2) circle[radius=3pt] node[left] {$r_3$};
                \fill (5,3) circle[radius=3pt];
                \fill (5,3) circle[radius=3pt] node[above left] {$r_{4}$};
                \draw[dashed] (0,0) -- (12,4);
                \fill (12,4) circle[radius=3pt];
                \draw[dashed] (0,1) -- (9,4);
                \fill (9,4) circle[radius=3pt];
                \draw[dashed] (0,2) -- (3,3);
                \fill (3,3) circle[radius=3pt];
                \draw[dashed] (5,3) -- (8,4);
                \fill (8,4) circle[radius=3pt];
            \end{scope}
            \begin{scope}[shift={(10, -1)}]
                \node at (0,0) {$\downarrow$};
                \node at (0.6, 0) {$\phi$};
            \end{scope}
            \begin{scope}[shift={(7,-6.5)}]
                \draw[dotted] (0, 0) grid (8, 4);
                \draw[rounded corners=1, color=BrightRed, line width=1.5] (0, 0) -- (0, 1) -- (2, 1) -- (2, 2)-- (4, 2) -- (4,3) -- (6,3) -- (6,4) -- (8, 4);
                \draw[rounded corners=1, color=Black, line width=1.5] (0,0) -- (0,3) -- (4,3) -- (4, 4) -- (8,4);
                \fill (0,0) circle[radius=3pt];
                \fill (0,0) circle[radius=3pt] node[left] {$r_1$};
                \fill (0,1) circle[radius=3pt];
                \fill (0,1) circle[radius=3pt] node[left] {$r_2$};
                \fill (0,2) circle[radius=3pt];
                \fill (0,2) circle[radius=3pt] node[left] {$r_3$};
                \fill (4,3) circle[radius=3pt];
                \fill (4,3) circle[radius=3pt] node[above left] {$r_{4}$};
                \draw[dashed] (0,0) -- (8,4);
                \fill (8,4) circle[radius=3pt];
                \draw[dashed] (0,1) -- (6,4);
                \fill (6,4) circle[radius=3pt];
                \draw[dashed] (0,2) -- (2,3);
                \fill (2,3) circle[radius=3pt];
            \end{scope}
            \begin{scope}[shift={(5.33,-5)}]
                \node at (0,0) {$\xleftarrow{\pi}$};
            \end{scope}
            \begin{scope}[shift={(-4,-10)}]
                \draw[dotted] (0, 0) grid (8, 8);
                \draw[rounded corners=1, color=BrightRed, line width=1.5] (0, 0) -- (0, 1) -- (1, 1) -- (1, 2)-- (2, 2) -- (2,3) -- (3,3) -- (3,4) -- (4, 4) -- (4,5) -- (5,5) -- (5,6) -- (6,6) -- (6,7) -- (7,7) -- (7,8) -- (8,8);
                \draw[rounded corners=1, color=Black, line width=1.5] (0,0) -- (0,6) -- (4,6) -- (4, 8) -- (8,8);
                \fill (0,0) circle[radius=3pt];
                \fill (0,0) circle[radius=3pt] node[left] {$r_1$};
                \fill (0,2) circle[radius=3pt];
                \fill (0,2) circle[radius=3pt] node[left] {$r_3$};
                \fill (0,4) circle[radius=3pt];
                \fill (0,4) circle[radius=3pt] node[left] {$r_5$};
                \fill (4,6) circle[radius=3pt];
                \fill (4,6) circle[radius=3pt] node[above left] {$r_{7}$};
                \draw[dashed] (0,0) -- (8,8);
                \fill (8,8) circle[radius=3pt];
                \draw[dashed] (0,2) -- (6,8);
                \fill (6,8) circle[radius=3pt];
                \draw[dashed] (0,4) -- (2,6);
                \fill (2,6) circle[radius=3pt];
            \end{scope}
        \end{tikzpicture}
    \end{center}

    Let's describe how this works with our algorithms directly.
    On the left we are doing the algorithm we just described and on the right hand side we are doing the algorithm found in the proof of \autoref{lem:bij_nm}.
    \begin{figure}[H]
        \caption{An example of the algorithm for $\bar{\phi}$ on the left and for $\phi$ on the right.}
        \label{fig:phi-ex}
    \begin{center}
        \begin{tikzpicture}[scale=0.3]
            \begin{scope}[shift={(0,0)}]
                \node at (-4.5,6) {$D = D^0 = $};
                \draw[dotted] (0, 0) grid (12, 12);
                \draw[rounded corners=1, color=BrightRed, ultra thick] (0, 0) -- (0, 1) -- (1, 1) -- (1, 2)-- (2, 2) -- (2,3) -- (3,3) -- (3,4) -- (4, 4) -- (4,5) -- (5,5) -- (5,6) -- (6,6) -- (6,7) -- (7,7) -- (7,8) -- (8,8) -- (8,9) -- (9,9) -- (9,10) -- (10,10) -- (10,11) -- (11,11) -- (11, 12) -- (12,12);
                \draw[rounded corners=1, color=Black, ultra thick] (0,0) -- (0,9) -- (5,9) -- (5, 12) -- (12,12);
                \fill (0,0) circle[radius=5pt];
                \fill (0,0) circle[radius=5pt] node[left] {$r_1$};
                \fill (0,3) circle[radius=5pt];
                \fill (0,3) circle[radius=5pt] node[left] {$r_4$};
                \fill (0,6) circle[radius=5pt];
                \fill (0,6) circle[radius=5pt] node[left] {$r_7$};
                \fill (5,9) circle[radius=5pt];
                \fill (5,9) circle[radius=5pt] node[above left] {$r_{10}$};
                \draw[dashed] (0,0) -- (12,12);
                \fill (12,12) circle[radius=5pt];
                \draw[dashed] (0,3) -- (9,12);
                \fill (9,12) circle[radius=5pt];
                \draw[dashed] (0,6) -- (3,9);
                \fill (3,9) circle[radius=5pt];
                \draw[dashed] (5,9) -- (8,12);
                \fill (8,12) circle[radius=5pt];
                \draw[color=blue] (12,12) circle[radius=12pt];
                \draw[color=blue, line width=1.6] (11,12) -- (12,12);
            \end{scope}
            \begin{scope}[shift={(0,-13)}]
                \node at (-3.5,6) {$D^1 = $};
                \draw[dotted] (0, 0) grid (12, 12);
                \draw[rounded corners=1, color=BrightRed, ultra thick] (0, 0) -- (0, 1) -- (1, 1) -- (1, 2)-- (2, 2) -- (2,3) -- (3,3) -- (3,4) -- (4, 4) -- (4,5) -- (5,5) -- (5,6) -- (6,6) -- (6,7) -- (7,7) -- (7,8) -- (8,8) -- (8,9) -- (9,9) -- (9,10) -- (10,10) -- (10,11) -- (11,11) -- (11, 12) -- (12,12);
                \draw[rounded corners=1, color=Black, ultra thick] (0,0) -- (0,9) -- (5,9) -- (5, 12) -- (12,12);
                \fill (0,0) circle[radius=5pt];
                \fill (0,0) circle[radius=5pt] node[left] {$r_1$};
                \fill (0,3) circle[radius=5pt];
                \fill (0,3) circle[radius=5pt] node[left] {$r_4$};
                \fill (0,6) circle[radius=5pt];
                \fill (0,6) circle[radius=5pt] node[left] {$r_7$};
                \fill (5,9) circle[radius=5pt];
                \fill (5,9) circle[radius=5pt] node[above left] {$r_{10}$};
                \draw[dashed] (0,0) -- (12,12);
                \fill (12,12) circle[radius=5pt];
                \draw[dashed] (0,3) -- (9,12);
                \fill (9,12) circle[radius=5pt];
                \draw[dashed] (0,6) -- (3,9);
                \fill (3,9) circle[radius=5pt];
                \draw[dashed] (5,9) -- (8,12);
                \fill (8,12) circle[radius=5pt];
                \draw[color=blue] (8,12) circle[radius=12pt];
                \draw[color=blue, line width=1.6] (9,12) -- (10,12);
            \end{scope}
            \begin{scope}[shift={(0,-26)}]
                \node at (-3.5,6) {$D^2 = $};
                \draw[dotted] (0, 0) grid (12, 12);
                \draw[rounded corners=1, color=BrightRed, ultra thick] (0, 0) -- (0, 1) -- (1, 1) -- (1, 2)-- (2, 2) -- (2,3) -- (3,3) -- (3,4) -- (4, 4) -- (4,5) -- (5,5) -- (5,6) -- (6,6) -- (6,7) -- (7,7) -- (7,8) -- (8,8) -- (8,9) -- (9,9) -- (9,10) -- (10,10) -- (10,11) -- (11,11) -- (11, 12) -- (12,12);
                \draw[rounded corners=1, color=Black, ultra thick] (0,0) -- (0,9) -- (5,9) -- (5, 12) -- (12,12);
                \fill (0,0) circle[radius=5pt];
                \fill (0,0) circle[radius=5pt] node[left] {$r_1$};
                \fill (0,3) circle[radius=5pt];
                \fill (0,3) circle[radius=5pt] node[left] {$r_4$};
                \fill (0,6) circle[radius=5pt];
                \fill (0,6) circle[radius=5pt] node[left] {$r_7$};
                \fill (5,9) circle[radius=5pt];
                \fill (5,9) circle[radius=5pt] node[above left] {$r_{10}$};
                \draw[dashed] (0,0) -- (12,12);
                \fill (12,12) circle[radius=5pt];
                \draw[dashed] (0,3) -- (9,12);
                \fill (9,12) circle[radius=5pt];
                \draw[dashed] (0,6) -- (3,9);
                \fill (3,9) circle[radius=5pt];
                \draw[dashed] (5,9) -- (8,12);
                \fill (8,12) circle[radius=5pt];
                \draw[color=blue] (3,9) circle[radius=12pt];
                \draw[color=blue, line width=1.6] (3,9) -- (4,9);
            \end{scope}
            \begin{scope}[shift={(0,-39)}]
                \node at (-3.5,6) {$D^3 = $};
                \draw[dotted] (0, 0) grid (12, 12);
                \draw[rounded corners=1, color=BrightRed, ultra thick] (0, 0) -- (0, 1) -- (1, 1) -- (1, 2)-- (2, 2) -- (2,3) -- (3,3) -- (3,4) -- (4, 4) -- (4,5) -- (5,5) -- (5,6) -- (6,6) -- (6,7) -- (7,7) -- (7,8) -- (8,8) -- (8,9) -- (9,9) -- (9,10) -- (10,10) -- (10,11) -- (11,11) -- (11, 12) -- (12,12);
                \draw[rounded corners=1, color=Black, ultra thick] (0,0) -- (0,9) -- (4,9) -- (4, 12) -- (12,12);
                \fill (0,0) circle[radius=5pt];
                \fill (0,0) circle[radius=5pt] node[left] {$r_1$};
                \fill (0,3) circle[radius=5pt];
                \fill (0,3) circle[radius=5pt] node[left] {$r_4$};
                \fill (0,6) circle[radius=5pt];
                \fill (0,6) circle[radius=5pt] node[left] {$r_7$};
                \fill (4,9) circle[radius=5pt];
                \fill (4,9) circle[radius=5pt] node[above left] {$r_{10}$};
                \draw[dashed] (0,0) -- (12,12);
                \fill (12,12) circle[radius=5pt];
                \draw[dashed] (0,3) -- (9,12);
                \fill (9,12) circle[radius=5pt];
                \draw[dashed] (0,6) -- (3,9);
                \fill (3,9) circle[radius=5pt];
                \draw[dashed] (4,9) -- (7,12);
                \fill (7,12) circle[radius=5pt];
                \draw[color=blue] (7,12) circle[radius=12pt];
                \draw[color=blue, line width=1.6] (7,12) -- (8,12);
            \end{scope}
            \begin{scope}[shift={(0,-52)}]
                \node at (-3.5,6) {$D^4 = $};
                \draw[dotted] (0, 0) grid (12, 12);
                \draw[rounded corners=1, color=BrightRed, ultra thick] (0, 0) -- (0, 1) -- (1, 1) -- (1, 2)-- (2, 2) -- (2,3) -- (3,3) -- (3,4) -- (4, 4) -- (4,5) -- (5,5) -- (5,6) -- (6,6) -- (6,7) -- (7,7) -- (7,8) -- (8,8) -- (8,9) -- (9,9) -- (9,10) -- (10,10) -- (10,11) -- (11,11) -- (11, 12) -- (12,12);
                \draw[rounded corners=1, color=Black, ultra thick] (0,0) -- (0,9) -- (4,9) -- (4, 12) -- (12,12);
                \fill (0,0) circle[radius=5pt];
                \fill (0,0) circle[radius=5pt] node[left] {$r_1$};
                \fill (0,3) circle[radius=5pt];
                \fill (0,3) circle[radius=5pt] node[left] {$r_4$};
                \fill (0,6) circle[radius=5pt];
                \fill (0,6) circle[radius=5pt] node[left] {$r_7$};
                \fill (4,9) circle[radius=5pt];
                \fill (4,9) circle[radius=5pt] node[above left] {$r_{10}$};
                \draw[dashed] (0,0) -- (12,12);
                \fill (12,12) circle[radius=5pt];
                \draw[dashed] (0,3) -- (9,12);
                \fill (9,12) circle[radius=5pt];
                \draw[dashed] (0,6) -- (3,9);
                \fill (3,9) circle[radius=5pt];
                \draw[dashed] (4,9) -- (7,12);
                \fill (7,12) circle[radius=5pt];
            \end{scope}
            \begin{scope}[shift={(0,-61)}]
                \node at (-4.5,4) {$\bar{\phi}(D) = $};
                \draw[dotted] (0, 0) grid (8, 8);
                \draw[rounded corners=1, color=BrightRed, ultra thick] (0, 0) -- (0, 1) -- (1, 1) -- (1, 2)-- (2, 2) -- (2,3) -- (3,3) -- (3,4) -- (4, 4) -- (4,5) -- (5,5) -- (5,6) -- (6,6) -- (6,7) -- (7,7) -- (7,8) -- (8,8);
                \draw[rounded corners=1, color=Black, ultra thick] (0,0) -- (0,6) -- (4,6) -- (4, 8) -- (8,8);
                \fill (0,0) circle[radius=5pt];
                \fill (0,0) circle[radius=5pt] node[left] {$r_1$};
                \fill (0,2) circle[radius=5pt];
                \fill (0,2) circle[radius=5pt] node[left] {$r_3$};
                \fill (0,4) circle[radius=5pt];
                \fill (0,4) circle[radius=5pt] node[left] {$r_5$};
                \fill (4,6) circle[radius=5pt];
                \fill (4,6) circle[radius=5pt] node[above left] {$r_{7}$};
                \draw[dashed] (0,0) -- (8,8);
                \fill (8,8) circle[radius=5pt];
                \draw[dashed] (0,2) -- (6,8);
                \fill (6,8) circle[radius=5pt];
                \draw[dashed] (0,4) -- (2,6);
                \fill (2,6) circle[radius=5pt];
                \draw[dashed] (4,6) -- (6,8);
                \fill (6,8) circle[radius=5pt];
            \end{scope}
            \begin{scope}[shift={(22,5)}]
                \node at (-4.5,2) {\small$\pi^{-1}(D) = $};
                \draw[dotted] (0, 0) grid (12, 4);
                \draw[rounded corners=1, color=BrightRed, line width=1.5] (0, 0) -- (0, 1) -- (3, 1) -- (3, 2)-- (6, 2) -- (6,3) -- (9,3) -- (9,4) -- (12, 4);
                \draw[rounded corners=1, color=Black, line width=1.5] (0,0) -- (0,3) -- (5,3) -- (5, 4) -- (12,4);
                \fill (0,0) circle[radius=5pt];
                \fill (0,0) circle[radius=5pt] node[left] {$r_1$};
                \fill (0,1) circle[radius=5pt];
                \fill (0,1) circle[radius=5pt] node[left] {$r_2$};
                \fill (0,2) circle[radius=5pt];
                \fill (0,2) circle[radius=5pt] node[left] {$r_3$};
                \fill (5,3) circle[radius=5pt];
                \fill (5,3) circle[radius=5pt] node[above left] {$r_{4}$};
                \draw[dashed] (0,0) -- (12,4);
                \fill (12,4) circle[radius=5pt];
                \draw[dashed] (0,1) -- (9,4);
                \fill (9,4) circle[radius=5pt];
                \draw[dashed] (0,2) -- (3,3);
                \fill (3,3) circle[radius=5pt];
                \draw[dashed] (5,3) -- (8,4);
                \fill (8,4) circle[radius=5pt];
            \end{scope}
            \begin{scope}[shift={(22,-8)}]
                \node at (-4.5,2) {\small$\pi^{-1}(D)^1 = $};
                \draw[dotted] (0, 0) grid (12, 4);
                \draw[rounded corners=1, color=BrightRed, line width=1.5] (0, 0) -- (0, 1) -- (3, 1) -- (3, 2)-- (6, 2) -- (6,3) -- (9,3) -- (9,4) -- (12, 4);
                \draw[rounded corners=1, color=Black, line width=1.5] (0,0) -- (0,3) -- (5,3) -- (5, 4) -- (12,4);
                \fill (0,0) circle[radius=5pt];
                \fill (0,0) circle[radius=5pt] node[left] {$r_1$};
                \fill (0,1) circle[radius=5pt];
                \fill (0,1) circle[radius=5pt] node[left] {$r_2$};
                \fill (0,2) circle[radius=5pt];
                \fill (0,2) circle[radius=5pt] node[left] {$r_3$};
                \fill (5,3) circle[radius=5pt];
                \fill (5,3) circle[radius=5pt] node[above left] {$r_{4}$};
                \draw[dashed] (0,0) -- (12,4);
                \fill (12,4) circle[radius=5pt];
                \draw[dashed] (0,1) -- (9,4);
                \fill (9,4) circle[radius=5pt];
                \draw[dashed] (0,2) -- (3,3);
                \fill (3,3) circle[radius=5pt];
                \draw[dashed] (5,3) -- (8,4);
                \fill (8,4) circle[radius=5pt];
                \draw[color=blue] (9,4) circle[radius=12pt];
                \draw[color=blue, line width=1.6] (9,4) -- (10,4);
            \end{scope}
            \begin{scope}[shift={(22,-21)}]
                \node at (-4.5,2) {\small$\pi^{-1}(D)^2 = $};
                \draw[dotted] (0, 0) grid (12, 4);
                \draw[rounded corners=1, color=BrightRed, line width=1.5] (0, 0) -- (0, 1) -- (3, 1) -- (3, 2)-- (6, 2) -- (6,3) -- (9,3) -- (9,4) -- (12, 4);
                \draw[rounded corners=1, color=Black, line width=1.5] (0,0) -- (0,3) -- (5,3) -- (5, 4) -- (12,4);
                \fill (0,0) circle[radius=5pt];
                \fill (0,0) circle[radius=5pt] node[left] {$r_1$};
                \fill (0,1) circle[radius=5pt];
                \fill (0,1) circle[radius=5pt] node[left] {$r_2$};
                \fill (0,2) circle[radius=5pt];
                \fill (0,2) circle[radius=5pt] node[left] {$r_3$};
                \fill (5,3) circle[radius=5pt];
                \fill (5,3) circle[radius=5pt] node[above left] {$r_{4}$};
                \draw[dashed] (0,0) -- (12,4);
                \fill (12,4) circle[radius=5pt];
                \draw[dashed] (0,1) -- (9,4);
                \fill (9,4) circle[radius=5pt];
                \draw[dashed] (0,2) -- (3,3);
                \fill (3,3) circle[radius=5pt];
                \draw[dashed] (5,3) -- (8,4);
                \fill (8,4) circle[radius=5pt];
                \draw[color=blue] (3,3) circle[radius=12pt];
                \draw[color=blue, line width=1.6] (3,3) -- (4,3);
            \end{scope}
            \begin{scope}[shift={(22,-34)}]
                \node at (-4.5,2) {\small$\pi^{-1}(D)^3 = $};
                \draw[dotted] (0, 0) grid (12, 4);
                \draw[rounded corners=1, color=BrightRed, line width=1.5] (0, 0) -- (0, 1) -- (3, 1) -- (3, 2)-- (6, 2) -- (6,3) -- (9,3) -- (9,4) -- (12, 4);
                \draw[rounded corners=1, color=Black, line width=1.5] (0,0) -- (0,3) -- (4,3) -- (4, 4) -- (12,4);
                \fill (0,0) circle[radius=5pt];
                \fill (0,0) circle[radius=5pt] node[left] {$r_1$};
                \fill (0,1) circle[radius=5pt];
                \fill (0,1) circle[radius=5pt] node[left] {$r_2$};
                \fill (0,2) circle[radius=5pt];
                \fill (0,2) circle[radius=5pt] node[left] {$r_3$};
                \fill (4,3) circle[radius=5pt];
                \fill (4,3) circle[radius=5pt] node[above left] {$r_{4}$};
                \draw[dashed] (0,0) -- (12,4);
                \fill (12,4) circle[radius=5pt];
                \draw[dashed] (0,1) -- (9,4);
                \fill (9,4) circle[radius=5pt];
                \draw[dashed] (0,2) -- (3,3);
                \fill (3,3) circle[radius=5pt];
                \draw[dashed] (4,3) -- (7,4);
                \fill (7,4) circle[radius=5pt];
                \draw[color=blue] (7,4) circle[radius=12pt];
                \draw[color=blue, line width=1.6] (7,4) -- (8,4);
            \end{scope}
            \begin{scope}[shift={(22,-47)}]
                \node at (-4.5,2) {\small$\pi^{-1}(D)^4 = $};
                \draw[dotted] (0, 0) grid (12, 4);
                \draw[rounded corners=1, color=BrightRed, line width=1.5] (0, 0) -- (0, 1) -- (3, 1) -- (3, 2)-- (6, 2) -- (6,3) -- (9,3) -- (9,4) -- (12, 4);
                \draw[rounded corners=1, color=Black, line width=1.5] (0,0) -- (0,3) -- (4,3) -- (4, 4) -- (12,4);
                \fill (0,0) circle[radius=5pt];
                \fill (0,0) circle[radius=5pt] node[left] {$r_1$};
                \fill (0,1) circle[radius=5pt];
                \fill (0,1) circle[radius=5pt] node[left] {$r_2$};
                \fill (0,2) circle[radius=5pt];
                \fill (0,2) circle[radius=5pt] node[left] {$r_3$};
                \fill (4,3) circle[radius=5pt];
                \fill (4,3) circle[radius=5pt] node[above left] {$r_{4}$};
                \draw[dashed] (0,0) -- (12,4);
                \fill (12,4) circle[radius=5pt];
                \draw[dashed] (0,1) -- (9,4);
                \fill (9,4) circle[radius=5pt];
                \draw[dashed] (0,2) -- (3,3);
                \fill (3,3) circle[radius=5pt];
                \draw[dashed] (4,3) -- (7,4);
                \fill (7,4) circle[radius=5pt];
            \end{scope}
            \begin{scope}[shift={(22,-56)}]
                \node at (-5,2) {\small$\phi(\pi^{-1}(D)) = $};
                \draw[dotted] (0, 0) grid (8, 4);
                \draw[rounded corners=1, color=BrightRed, line width=1.5] (0, 0) -- (0, 1) -- (2, 1) -- (2, 2)-- (4, 2) -- (4,3) -- (6,3) -- (6,4) -- (8, 4);
                \draw[rounded corners=1, color=Black, line width=1.5] (0,0) -- (0,3) -- (4,3) -- (4, 4) -- (8,4);
                \fill (0,0) circle[radius=5pt];
                \fill (0,0) circle[radius=5pt] node[left] {$r_1$};
                \fill (0,1) circle[radius=5pt];
                \fill (0,1) circle[radius=5pt] node[left] {$r_2$};
                \fill (0,2) circle[radius=5pt];
                \fill (0,2) circle[radius=5pt] node[left] {$r_3$};
                \fill (4,3) circle[radius=5pt];
                \fill (4,3) circle[radius=5pt] node[above left] {$r_{4}$};
                \draw[dashed] (0,0) -- (8,4);
                \fill (8,4) circle[radius=5pt];
                \draw[dashed] (0,1) -- (6,4);
                \fill (6,4) circle[radius=5pt];
                \draw[dashed] (0,2) -- (2,3);
                \fill (2,3) circle[radius=5pt];
                \draw[dashed] (4,3) -- (6,4);
                \fill (6,4) circle[radius=5pt];
            \end{scope}
        \end{tikzpicture}
    \end{center}
    \end{figure}
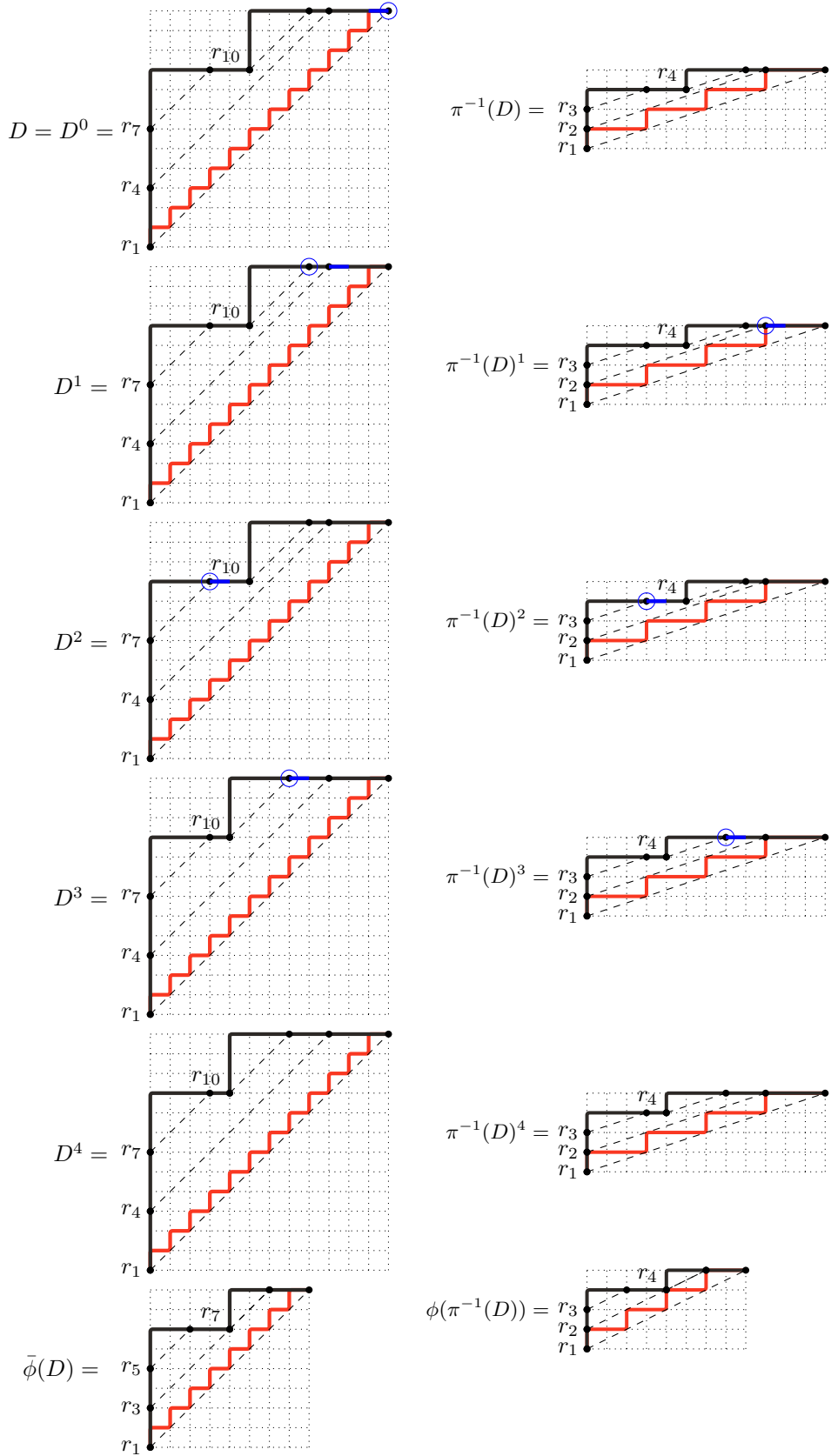
    Removing the final $n = 4$ $E$ steps and removing one north step for every $3$ north steps (in other words, above each $r_{im +1}$) gives us the same Dyck path as previously.
\end{example}

Notice that in the algorithm for $\bar{\phi}$ we can move the north step after the right hand point at the same time as the east step after the hit point and place the $NE$ at the end of Dyck path.
This would mean we would need to remove $(NE)^n$ at the end of the Dyck path to get $\bar{\phi}(D)$ and, in addition, since we are removing north steps, we would have to change our indices from $im + 1$ to $im - i + 1$.
We will use this alternative approach in the following lemmas.
We start by proving a technical lemma.

\begin{lemma}
    \label{lem:dist-fn-phibar}
    For $D \in \Dinnm$ let $D^i$ be the $i$-th iteration of the (alternative) algorithm for $\bar{\phi}(\pi(D))$.
    Then for $j \in [mn]$ we have the following distance functions,
    \begin{align*}
        d_{D^{i+1}}^t(j) &= \begin{cases}
            1 & \text{if } j \geq mn - i\\
            d_{D^{i}}^t(j+1) & \text{if }mn - i > j \geq im - i + 1\\
            d_{D^{i}}^t(j) - 1 & \text{if }j < im-i + 1\text{ and } t_j^{D^{i}} \text{ comes weakly after } t_{im-i+1}^{D^{i}}\\
            d_{D^{i}}^t(j) & \text{otherwise }
        \end{cases}\\
        d_{D^{i+1}}^h(j) &= \begin{cases}
            mn-j+1 & \text{if } j \geq mn - i\\
            d_{D^{i}}^h(j+1) & \text{if } mn - i > j \geq im - i + 1 \text{ and } j \neq 1\\
            mn & \text{if } j = 1\\
            d_{D^{i}}^h(j) - 1 & \text{if }j < im - i + 1 \text{ and } h_j^{D^{i}} \text{ comes weakly after } h_{im-i+1}^{D^{i}}\\
            d_{D^{i}}^h(j) & \text{otherwise }
        \end{cases}
    \end{align*}
\end{lemma}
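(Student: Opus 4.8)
The plan is to prove both formulas by analysing a single step of the alternative algorithm geometrically and reading off how each right hand point, touch point and hit point moves. Fix the step producing $D^{i+1}$ from $D^{i}$ and set $k = im - i + 1$. By construction this step deletes the north step immediately after $r_k^{D^i}$ and the east step immediately after $h_k^{D^i}$ and appends a factor $NE$ at the end, so we factor $D^{i} = w_1\, N\, w_2\, E\, w_3$, where $w_1$ ends at $r_k^{D^i}$, the displayed $N$ is the $k$-th north step, $w_2$ runs from $r_k^{D^i}$ up to $h_k^{D^i}$, the displayed $E$ is the step after $h_k^{D^i}$, and $w_3$ is the remainder; then $D^{i+1} = w_1\, w_2\, w_3\, N\, E$. (The deleted $E$ is the step after $h_k^{D^i}$ when the $k$-th row has positive horizontal distance, and is the final east step of $D^i$ in the degenerate case $k=1$, where $w_1$ is empty, $w_3$ is empty, and validity is guaranteed because $D^i$ begins with at least $m\ge 2$ north steps.) First I would record the resulting shifts: the points of $w_1$ (rows $<k$) are fixed; deleting the $N$ before $w_2$ lowers every point of $w_2$ by $(0,-1)$, so its horizontal distances drop by one and its north steps are reindexed down by one; deleting in addition the $E$ after $h_k^{D^i}$ lowers every point of $w_3$ by $(-1,-1)$, so its horizontal distances are preserved and its north steps are reindexed down by one; and the appended $NE$ creates a new top row, extending the top staircase of $D^{i+1}$ to size $i+1$, namely rows $mn-i,\dots,mn$.

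With this dictionary the touch-distance formula follows by a case split on the row $j$ of $D^{i+1}$. For $j \ge mn-i$ the row lies in the size-$(i+1)$ top staircase $(NE)^{i+1}$, where each right hand point reaches its touch point in a single $NE$ pair, giving $d_{D^{i+1}}^t(j)=1$. For $k \le j < mn-i$ the row comes from $w_2\cup w_3$ reindexed down by one; since a touch distance equals the height difference between $r_j$ and $t_j$ along their common diagonal, and that diagonal is transported consistently under the shifts, one gets $d_{D^{i+1}}^t(j)=d_{D^i}^t(j+1)$. For $j<k$ the right hand point $r_j$ is fixed, and its touch point either precedes the excised block, in which case nothing changes, or it comes weakly after $t_k^{D^i}$ and hence lies in the lowered part of the path, in which case the height difference from $r_j$ to $t_j$ shrinks by exactly one, giving $d_{D^i}^t(j)-1$. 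I would then carry out the identical analysis for the hit distance, now using that the hit distance equals the height difference between $r_j$ and $h_j$; the three generic cases run in parallel, with the membership test ``$h_j^{D^i}$ comes weakly after $h_k^{D^i}$'' replacing the touch-point test.

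Two boundary phenomena must be isolated for the hit distance, and these account for the extra cases in the statement. First, because $r_1^{D^{i+1}}=(0,0)$ has horizontal distance $0$, its hit point is the terminal corner $(mn,mn)$, forcing $d_{D^{i+1}}^h(1)=mn$ regardless of reindexing; this is why $j=1$ is separated from the $k\le j<mn-i$ branch. Second, unlike the touch points, the hit points of the entire top staircase all reach the global top $(mn,mn)$, so appending a step lengthens each of them; a direct computation on $(NE)^{i+1}$ gives $d_{D^{i+1}}^h(j)=mn-j+1$ for $j\ge mn-i$, which genuinely disagrees with the inherited value $d_{D^i}^h(j+1)$ at the boundary $j=mn-i$ and explains why the staircase cannot be merged into the reindexing branch.

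The step I expect to be the main obstacle is the sub-case $j<k$ with $h_j^{D^i}=h_k^{D^i}$ in the hit-distance formula. Here the former common hit point lies at the \emph{end} of $w_2$, so the $(0,-1)$ shift drops it off the diagonal of $r_j$ and it ceases to be a hit point of $r_j$ in $D^{i+1}$; the hit point of $r_j$ therefore relocates into $w_3$, and one must show the relocated hit point sits exactly one unit lower, so that the net change is still $-1$ and not more. The key point is that deleting the east step after $h_k^{D^i}$ reconnects $w_2$ directly to $w_3$, so the first point of the shifted $w_3$ is precisely the old $h_k^{D^i}$ lowered by one; the horizontal-distance bookkeeping of the first paragraph, preservation along $w_3$ and decrement along $w_2$, then pins the new hit point to height $y_{h_k}-1$. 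Verifying that every change is by exactly one, and that no row outside the stated ranges has its touch or hit point migrate across the excised block, is then a matter of applying the same diagonal bookkeeping uniformly.
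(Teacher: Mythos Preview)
Your proposal is correct and follows essentially the same approach as the paper's own proof: both proceed by a case analysis on the position of $j$ relative to $k=im-i+1$ and $mn-i$, and on whether the touch/hit point of row $j$ lies before or after the excised block. The paper's argument is quite terse (it simply notes that the subpath above row $k$ is ``pushed south by one step'' and then lists three sub-cases for $j<k$), whereas your explicit factorisation $D^{i}=w_1\,N\,w_2\,E\,w_3$ with the coordinate shifts $(0,-1)$ on $w_2$ and $(-1,-1)$ on $w_3$ makes the same bookkeeping more transparent; in particular, your isolated treatment of the boundary case $h_j^{D^i}=h_k^{D^i}$ is more careful than the paper, which folds it into a one-line third sub-case without justifying that the relocated hit point drops by exactly one.
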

\begin{proof}
    We show this case by case.

    If $j \geq mn - i$, then we have a chain of $NE$ steps by construction, giving us those entries.

    If $mn - i > j \geq im - i + 1$ (and $j \neq 1$ for the hit distance), then the subpath from the row $im - i + 1$ to the final point is pushed south by one step.
    If a point is between $im - i + 1$ and the row containing $t_{im - i + 1} = h_{im -i + 1}$, then the distance to the touch and hit points do not change.
    In particular, note that there must be an east step before $t_{im - i + 1}$ else we would have a contradiction for $t_{im - i + 1} = h_{im - i + 1}$.
    Furthermore, if a point is between the row containing $t_{im - i + 1}$ and the row $mn - i$, then the distances between all points don't change.
    Therefore, for every row we can just grab the entry of $d_{D^{i}}^t(j+1)$ and $d_{D^{i}}^h(j+1)$ respectively.

    If $j = 1$, then the hit distance never changes since the horizontal distance is always $0$, giving us $mn$ ofr the hit distance.

    If $j < im - i + 1$ then we have three subcases to consider.
    If the hit point of $j$ comes strictly before $t_{im - i + 1}^{D^{i}}$, then the horizontal distance of $r_j^{D^{i}}$ is greater than $r_{im - i + 1}^{D^{i}}$ and therefore the distances don't change.
    If the touch point of $j$ comes (weakly) after $t_{im - i + 1}^{D^{i}}$, then the touch and hit points both move south and to the west by one point and thus the ditsances are decreased by one.
    Finally, if $h_{j}^{D^{i}} = h_{im - i + 1}^{D^{i}}$ and $t_j^{D^{i}} = r_{im - i + 1}^{D^{i}}$ then the distance to the hit point decreases by one, but the distance to the touch point does not change.
\end{proof}

\begin{proposition}
    \label{prop:dex-to-greedy}
    Let $\phi$ be the map from \autoref{lem:bij_nm}.
    For $D, D' \in \Dinnm$ such that $D \leq_T D'$ then $\phi(D) \leq_G \phi(D')$.
\end{proposition}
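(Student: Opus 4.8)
The plan is to phrase the whole comparison through touch and hit distance functions and to track them through the iterative description of $\bar{\phi}$ recorded in \autoref{lem:dist-fn-phibar}. First I would pass from $m$-Dyck paths of height $n$ to honest Dyck paths of height $mn$ via the isomorphism $\pi$. Since $\pi$ preserves the Tamari order together with touch and hit points (and hence the greedy order on the corresponding ideals as well), the hypothesis $D \leq_T D'$ gives $\pi(D) \leq_T \pi(D')$, so by \autoref{prop:tamari-iff-distance} we obtain $d_{\pi(D)}^t \leq d_{\pi(D')}^t$. On the target side, $\phi(D) \leq_G \phi(D')$ is equivalent to $\bar{\phi}(\pi(D)) \leq_G \bar{\phi}(\pi(D'))$, where $\bar{\phi}(\pi(D)) = \pi(\phi(D))$; so by \autoref{prop:greedy-distance} it suffices to establish the two inequalities $d_{\bar{\phi}(\pi(D))}^t \leq d_{\bar{\phi}(\pi(D'))}^t$ and $d_{\bar{\phi}(\pi(D))}^h \leq d_{\bar{\phi}(\pi(D'))}^h$.

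Before running the algorithm I would secure the companion input inequality $d_{\pi(D)}^h \leq d_{\pi(D')}^h$. This is exactly where membership in $\Dinnm$ is used: by \autoref{cor:indeg_is_N-1} the maximal in-degree paths are those with $t_i = h_i$ at every row $i > 1$, and under $\pi$ this forces touch and hit points to coincide at each block-bottom right hand point. These block bottoms are precisely the distinguished right hand points $r_{im-i+1}$ on which the (alternative) algorithm acts, so at those indices the hit distance agrees with the touch distance; the row-$1$ entries are both the full length $mn$, and the within-block indices are pinned by the block structure. Altogether the touch inequality upgrades to the base hit inequality $d_{\pi(D)}^h \leq d_{\pi(D')}^h$.

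The core is then an induction on the iteration index $i$, maintaining simultaneously that $d_{D^i}^t \leq d_{(D')^i}^t$ and $d_{D^i}^h \leq d_{(D')^i}^h$, where $D^i$ and $(D')^i$ denote the $i$-th iterates of $\bar{\phi}(\pi(D))$ and $\bar{\phi}(\pi(D'))$. Reading off \autoref{lem:dist-fn-phibar}, the structural cases --- the entries forced to $1$, to $mn-j+1$, or to $mn$, together with the shift cases $d^{i}(j+1)$ over the common range $mn-i > j \geq im-i+1$ --- depend only on $i$ and $j$ and therefore transport the inequalities verbatim. The only sensitive cases are the decrements, where whether the $j$-th entry drops by one is governed by whether $t_j$ (respectively $h_j$) lies weakly after the distinguished point $t_{im-i+1}$ (respectively $h_{im-i+1}$), a condition that need not agree for $D^i$ and $(D')^i$.

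I expect this mismatch to be the main obstacle. When $D^i$ decrements an entry that $(D')^i$ leaves fixed, the required $d_{D^i}^t(j) - 1 \leq d_{(D')^i}^t(j)$ is immediate from the inductive hypothesis; the genuine difficulty is the reverse situation, where I must upgrade the inductive inequality to a strict one $d_{D^i}^t(j) < d_{(D')^i}^t(j)$. I would settle this geometrically: the chain $D^i \leq_T (D')^i$ keeps $D^i$ weakly below $(D')^i$, so if $t_j^{D^i}$ falls strictly before the distinguished touch point while $t_j^{(D')^i}$ falls weakly after it, the two touch points must sit in distinct rows, which forces the extra unit of distance; the identical argument applies to the hit distances. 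Propagating both inequalities through all $n$ iterations and deleting the trailing $(NE)^n$ yields the two distance inequalities for $\bar{\phi}(\pi(D))$ and $\bar{\phi}(\pi(D'))$, so $\phi(D) \leq_G \phi(D')$ follows from \autoref{prop:greedy-distance}.
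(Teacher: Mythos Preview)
Your proposal is correct and follows essentially the same route as the paper: lift via $\pi$, reduce to the two distance-function inequalities via \autoref{prop:greedy-distance}, and then propagate those inequalities through the iterations of \autoref{lem:dist-fn-phibar}, with the decrement mismatch as the only nontrivial case and the $\Dinnm$ hypothesis supplying $t_{im-i+1}=h_{im-i+1}$ at the distinguished indices. The only cosmetic difference is that you argue the mismatch case directly (different rows, hence a strict inequality absorbs the lone decrement) whereas the paper phrases it as a contradiction (equal entries plus mismatched decrements force $d_{D^i}^t(im-i+1)>d_{(D')^i}^t(im-i+1)$); these are contrapositives of the same row-comparison, using the inductive inequality at the distinguished index.
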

\begin{proof}
    Let $\pi$, $\phi$, $\bar{\phi}$ be as defined above and $D^i$ as defined in \autoref{lem:dist-fn-phibar}.
    We restrict our attention to $d_{\bar{\phi}(\pi(D))}^t$ and $d_{\bar{\phi}(\pi(D))}^h$.
    In the rest of this proof, by ``comes after'', we mean comes weakly after instead of comes strictly after unless otherwise specified.
    
    It suffices to show $d_{D^{i+1}}^t \leq d_{D'^{i+1}}^t$ and $d_{D^{i+1}}^h \leq d_{D'^{i+1}}^h$ for all $i \in \left\{ 0, \ldots, n-1 \right\}$ where $D'^{i}$ is defined identically to $D^{i}$.
    By the distance equations in \autoref{lem:dist-fn-phibar}, the only time that $d_{D'^{i+1}}^t < d_{D^{i+1}}^t$ could happen is if there exists a $j < im - i + 1$ such that
    \begin{itemize}
        \item $d_{D^{i}}^t (j) = d_{{D'}^{i}}^t (j)$,
        \item $t_j^{D^{i}}$ does \emph{not} come after $t_{im-i+1}^{D^{i}}$, and
        \item $t_j^{{D'}^{i}}$ comes after $t_{im -i+ 1}^{{D'}^{i}}$.
    \end{itemize}
    Similarly, the only time that $d_{D'^{i+1}}^h < d_{D^{i+1}}^h$ is if there exists a $j < im - i + 1$ such that
    \begin{itemize}
        \item $d_{D^{i}}^h (j) = d_{{D'}^{i}}^h (j)$,
        \item $h_j^{D^{i}}$ does \emph{not} come after $h_{im-i+1}^{D^{i}}$, and
        \item $h_j^{{D'}^{i}}$ comes after $h_{im -i+ 1}^{{D'}^{i}}$.
    \end{itemize}

    We first show the first case is not possible through contradiction.
    Therefore, suppose that there exists (a minimal) $i$ and (a minimal) $j < im - i +1$ such that $d_{D^{i}}^t (j) = d_{{D'}^{i}}^t (j)$, $t_j^{D^{i}}$ does \emph{not} come after $t_{im-i+1}^{D^{i}}$, and $t_j^{{D'}^{i}}$ comes after $t_{im -i+ 1}^{{D'}^{i}}$.
    Then $d_{D^{i}}^t (j) = d_{{D'}^{i}}^t (j)$ implies that $t_j^{D^{i}}$ and $t_j^{{D'}^{i}}$ lie on the same row.
    Since $D \leq_T D'$ implies $d_D^t \leq d_D'^t$ by \autoref{prop:tamari-iff-distance} and by the minimality of $i$ and $j$, then $t_j^{D^{i}}$ comes after $t_{j}^{D'^{i}}$.
    Then the facts that $t_j^{D^{i}}$ does \emph{not} come after $t_{im-i+1}^{D^{i}}$ and $t_j^{{D'}^{i}}$ comes after $t_{im -i+ 1}^{{D'}^{i}}$ together with $t_j^{D^i}$ comes after $t_j^{D'^i}$ imply that $t_{im -i+ 1}^{D^{i-1}}$ comes strictly after $t_{im - i + 1}^{D'^{i-1}}$.
    This implies $d_{D^{i}}^t(im-i + 1) > d_{D'^{i}}^t(im -i+ 1)$ contradicting the fact that $d_{D}^t(im -i+1) \leq d_{D'}^{t}(im-i+1)$ as $D \leq_T D'$.

    We next show the second case is not possible (again) through contradiction.
    Suppose that there exists (a minimal) $i$ and (a minimal) $j < im - i + 1$ such that $d_{D^{i}}^h (j) = d_{{D'}^{i}}^h (j)$, $h_j^{D^{i}}$ does \emph{not} come after $h_{im-i+1}^{D^{i}}$, and $h_j^{{D'}^{i}}$ comes after $h_{im -i+ 1}^{{D'}^{i}}$.
    Then $d_{D^{i}}^h (j) = d_{{D'}^{i}}^h (j)$ implies that $h_j^{D^{i}}$ and $h_j^{{D'}^{i}}$ lie on the same row.
    Since $D \leq_T D'$ implies $d_D^t \leq d_D'^t$ by \autoref{prop:tamari-iff-distance} and by the minimality of $i$ and $j$, then $h_j^{D^{i}}$ comes after $h_{j}^{D'^{i}}$.
    Then the facts that $h_j^{D^{i}}$ does \emph{not} come after $h_{im-i+1}^{D^{i}}$ and $h_j^{{D'}^{i}}$ comes after $h_{im -i+ 1}^{{D'}^{i}}$ together with $h_j^{D^i}$ comes after $h_j^{D'^i}$ imply $h_{im -i+ 1}^{D^{i-1}}$ comes strictly after $h_{im - i + 1}^{D'^{i-1}}$.
    As, $D$ and $D'$ live in $\pi(\Dinnm)$, then $h_{im -i+ 1}^{D^{i}} = t_{im -i+ 1 }^{D^{i}}$ and $h_{im -i+ 1}^{D'^{i}} = t_{im -i+ 1 }^{D'^{i}}$.
    Therefore, $t_{im-i + 1}^{D^{i}}$ comes strictly after $t_{im -i+ 1}^{D'^{i}}$.
    This implies $d_{D^{i}}^t(im-i + 1) > d_{D'^{i}}^t(im -i+ 1)$ contradicting the fact that $d_{D}^t(im -i+1) \leq d_{D'}^{t}(im-i+1)$ as $D \leq_T D'$.

    Therefore, none of these two cases happen.
    In particular, removing the final $(NE)^n$ of $D^n$ and $D'^{n}$ gives us $\bar{\phi}(\pi(D))$ and $\bar{\phi}(\pi(D'))$ and thus $d_{\bar{\phi}(\pi(D))}^t\leq d_{\bar{\phi}(\pi(D'))}^t$ and $d_{\bar{\phi}(\pi(D))}^h \leq d_{\bar{\phi}(\pi(D'))}^h$.
    By \autoref{prop:greedy-distance}, we have $\bar{\phi}(\pi(D)) \leq_G \bar{\phi}(\pi(D'))$.
    Using the fact that $\bar{\phi} = \pi \circ \phi \circ \pi^{-1}$ and the fact that $\pi$ is a bijection (on the restriction) gives us $\phi(D) \leq_G \phi(D')$ as desired.
\end{proof}

Putting this all together, we can prove our final main theorem.

~
\begin{proof}[Proof of \autoref{thm:iso-dexter-greedy}]
    This is a direct result of \autoref{prop:greedy-to-dex} and \autoref{prop:dex-to-greedy}.
\end{proof}

\subsection{Arbitrary \texorpdfstring{$\nu$}{v}}
It might be tempting to look for ways to extend these result to arbitrary $\nu$.
It turns out that there are $\nu$ for which $\Dinnu$ is not in (set) bijection with any $\mcD_{\nu'}$ where $\nu'$ is a path weakly above $\nu$.
For paths from $(0, 0)$ to $(n, n)$ it can be shown that if $n \leq 3$ that there is always a set bijection, but when $n = 4$ there are two paths in which this fails.
In particular when $\nu$ is equal to either
\[
    ENNNEENE \quad \text{ or } \quad NENNEEEN
\]
then no such bijection exists.

In the following table we give the number of $\nu$ for which no such (set) bijection exists.
The symmetric nature of the table comes from the following fact.
\begin{theorem}{{\cite[Theorem 2]{PV-extTam}}}
    Given a path $\nu$, the $\nu$-Tamari lattice $\mbbT_\nu$ is poset isomorphic to $\mbbT_{\overleftarrow{\nu}}$ where $\overleftarrow{\nu}$ is the path obtained from ready $\nu$ backwards and replacing every north step by an east step and every east step by a north step.
\end{theorem}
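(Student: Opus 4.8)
The plan is to exhibit an explicit bijection $\psi\colon \mcD_\nu \to \mcD_{\overleftarrow\nu}$, namely $\psi(D) = \overleftarrow D$ (read $D$ backwards and exchange $N\leftrightarrow E$), and to show it is an isomorphism of Hasse diagrams. First I would check $\psi$ is a well-defined bijection. The operation $\overleftarrow{(\cdot)}$ is precisely the reflection of the bounding rectangle $[0,s_E]\times[0,s_N]$ across its anti-diagonal, $(x,y)\mapsto(s_N-y,\,s_E-x)$, with the image path read from the image of $(s_E,s_N)$ back to the image of $(0,0)$; under it $\nu\mapsto\overleftarrow\nu$ and $D\mapsto\overleftarrow D$, and the minimal path $\nu$ maps to $\overleftarrow\nu$ while the maximal path $N^{s_N}E^{s_E}$ maps to $N^{s_E}E^{s_N}$. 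Since this reflection carries the set of lattice paths weakly above $\nu$ bijectively onto the set of lattice paths weakly above $\overleftarrow\nu$, the path $D$ lies in $\mcD_\nu$ if and only if $\overleftarrow D$ lies in $\mcD_{\overleftarrow\nu}$. As $\overleftarrow{(\cdot)}$ is an involution on words and $\overleftarrow{\overleftarrow\nu}=\nu$, the same rule defines $\psi^{-1}$, so $\psi$ is a bijection.

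Second, I would track a single cover. If $D\cover_T D'$ via index $i$, then by definition $D=dEtf$ and $D'=dtEf$ with $t=D_{[r_i,t_i]}$. Applying $\overleftarrow{(\cdot)}$, which is an anti-homomorphism with $\overleftarrow E=N$, gives $\overleftarrow D=\overleftarrow f\,\overleftarrow t\,N\,\overleftarrow d$ and $\overleftarrow{D'}=\overleftarrow f\,N\,\overleftarrow t\,\overleftarrow d$. Thus $\psi$ turns the move ``slide an east step rightward past the block $t$'' into the move ``slide a north step leftward past the block $\overleftarrow t$'', and because the north step now occurs earlier, $\overleftarrow{D'}$ is weakly above $\overleftarrow D$. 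Hence $\psi$ preserves the direction of every Hasse edge; it remains only to recognise the leftward-north slide as a genuine $\overleftarrow\nu$-Tamari cover.

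Third, and this is the crux, I would prove that sliding a single north step leftward past a block coincides with the $\tau$ move on $\overleftarrow\nu$ (sliding a single east step rightward). The reflection interchanges the two coordinates, so the rightmost point of each row of $D$ becomes the topmost point of the corresponding column of $\overleftarrow D$, i.e.\ a right-hand point of $\overleftarrow D$, and the defining equality $\horiz[\nu]{r_i}=\horiz[\nu]{t_i}$ becomes the equality of the analogous vertical distances for $\overleftarrow\nu$, which is exactly the touch-point condition for $\overleftarrow D$. Feeding these correspondences in, the valley $EN$ at $r_i$ maps to a valley (since $\overleftarrow{EN}=EN$) sitting at some right-hand point $r_{i'}^{\overleftarrow D}$, the leftward-moving north step is the step leaving $r_{i'}^{\overleftarrow D}$, and the block $\overleftarrow t$ equals $\overleftarrow{D}_{[r_{i'},t_{i'}]}$; therefore $\overleftarrow{D'}=\Tup{\overleftarrow D}{i'}$. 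I expect the main obstacle to be precisely this bookkeeping: one must verify, controlling the locations of touch and hit points via \autoref{rem:touch_to_hit} and \autoref{lem:down-cover}, that the single-north-slide and single-east-slide descriptions pin down the \emph{same} pair of paths, so that the image of each cover is one cover rather than a longer chain. Once this is in hand, $\psi$ and $\psi^{-1}$ both send covers to covers in the order-preserving direction, making $\psi$ an isomorphism of the Hasse diagrams and hence $\mbbT_\nu\cong\mbbT_{\overleftarrow\nu}$.
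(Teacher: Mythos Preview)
The paper does not prove this statement; it is quoted from \cite{PV-extTam} with no proof given, so there is no in-paper argument to compare against.

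As for your proposal: the bijection $\psi(D)=\overleftarrow D$ is the right one, and your first two steps are fine. The gap is in the third step, and it is not just bookkeeping. Under the anti-diagonal reflection $(x,y)\mapsto(s_N-y,\,s_E-x)$, the horizontal distance $\horiz[\nu]{p}$ does \emph{not} become $\horiz[\overleftarrow\nu]{\psi(p)}$; it becomes the \emph{vertical} distance from $\psi(p)$ down to $\overleftarrow\nu$. But the cover relation in $\mbbT_{\overleftarrow\nu}$ is defined via touch points computed from horizontal distances, so there is no a priori reason the reflected block $\overleftarrow t$ should equal $\overleftarrow D_{[r_{i'},\,t_{i'}]}$. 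Establishing that it does is tantamount to showing the $\nu$-Tamari order can equivalently be described using vertical distances to $\nu$ instead of horizontal ones, and that equivalence is essentially the content of the theorem itself. The tools you invoke, \autoref{rem:touch_to_hit} and \autoref{lem:down-cover}, only relate touch and hit points for a fixed $\nu$; they say nothing about the horizontal-to-vertical switch that the reflection forces. So your sketch correctly isolates where the difficulty lies, but what you call the ``main obstacle'' is in fact the whole proof, and it needs a genuine argument (for instance, passing through the interval description inside an ordinary Tamari lattice, or through $\nu$-trees, as in \cite{PV-extTam}) rather than a direct verification.
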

Therefore, the top row can either be read as the number of north steps in $\nu$ and the left column as the number of east steps without loss of generality.
\begin{table}[h]
    \begin{center}
        \begin{tabular}{r || c | c | c |c|c|c|c|c|c|c|c }
                & 1     & 2     & 3     & 4     & 5     & 6     & 7     & 8     & 9     & 10    & 11\\
            \hline\hline
            1   & 0     & 0     & 0     & 0     & 0     & 0     & 0     & 0     & 0     & 0     & 0\\
            \hline
            2   & 0     & 0     & 0     & 0     & 0     & 0     & 0     & 0     & 0     & 0     & 0\\
            \hline
            3   & 0     & 0     & 0     & 1     & 2     & 2     & 3     & 4     & 4     & 5     & 6\\ 
            \hline
            4   & 0     & 0     & 1     & 2     & 4     & 6     & 9     & 10    & 15    & 19    & 26\\ 
            \hline
            5   & 0     & 0     & 2     & 4     & 16    & 21    & 28    & 35    & 46    & 54    & 67\\ 
            \hline
            6   & 0     & 0     & 2     & 6     & 21    & 38    & 51    & 71    & 97    & 150   & 189\\
            \hline
            7   & 0     & 0     & 3     & 9     & 28    & 51    & 77    & 121   & 187   & -     &-\\
            \hline
            8   & 0     & 0     & 4     & 10    & 35    & 71    & 121   & 210   & -     & -     &-\\
            \hline
            9   & 0     & 0     & 4     & 15    & 46    & 97    & 187   & -     & -     & -     &-\\
            \hline
            10  & 0     & 0     & 5     & 19    & 54    & 150   & -     & -     & -     & -     &-\\
            \hline
            11  & 0     & 0     & 6     & 26    & 67    & 189   & -     & -     & -     & -     &-\\
        \end{tabular}
    \end{center}
\end{table}
These results were obtained using sagemath \cite{sagemath}.
The OEIS does not seem to have any sequences which match the above set of numbers.
Spots with no numbers are places where our computer was not able to compute the value.

Continuing with our sagemath computations, we noticed that the number of elements with maximal in-degree is equal to the number of elements with maximal out-degree.
We conjecture this to be the case for arbitrary $\nu$.
\begin{conjecture}
    Let $\nu$ be an arbitrary path.
    Then $\order{\Dinnu} = \order{\Doutnu}$.
\end{conjecture}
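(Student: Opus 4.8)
The plan is to exhibit an explicit bijection $\Psi\colon \Dinnu \to \Doutnu$, built by mimicking the two maps already available in the $m$-Dyck path setting. Recall that for $\nu = (NE^m)^n$ the bijection $\phi$ of \autoref{lem:bij_nm} sends $D \in \Dinnm$ to the path with $\laf_{\phi(D)} = \laf_{\widehat{D}}$, while the order-preserving bijection $\varphi$ of \autoref{ss:out-degree-poset} sends $E \in \Doutnm$ to $\laf_{\varphi(E)} = \laf_E - \laf_{\xi_\nu}$; both land in $\mcD_{n,m-1}$, which is exactly how \autoref{cor:in-eq-out} is deduced. For arbitrary $\nu$ I would define $\Psi$ to be the composite ``$\varphi^{-1}\circ\phi$'' directly on left area vectors, namely $\laf_{\Psi(D)} = \laf_{\widehat{D}} + \laf_{\xi_\nu}$, where $\widehat{D}$ is the hit-count-reduced path of the in-degree section and $\xi_\nu$ is the maximal staircase of size $\sigma_\nu$. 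The key hope is that, even for those $\nu$ where neither $\Dinnu$ nor $\Doutnu$ equals the path set $\mcD_{\nu'}$ of any single $\nu'$ (the cases counted in the table), the composite is still a clean bijection between the two.

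First I would check that $\Psi$ is well defined. The operation $D \mapsto \widehat{D}$ only decreases the left area vector and always yields a $\nu$-Dyck path, so the content is to verify that adding $\laf_{\xi_\nu} = (0,\dots,0,1,2,\dots,\sigma_\nu)$ keeps the path weakly above $\nu$ and never exceeds $\laf_\nu$ in any component. Here I would exploit the two characterizations in play: by \autoref{prop:indeg-is-max-size} an element of $\Dinnu$ realizes $\sigma_\nu$ coincidences $t_i = h_i$ with $\horiz[\nu]{r_i} \neq 0$, and I would argue that these are precisely the rows in which $\widehat{\cdot}$ has freed up enough horizontal slack to absorb the staircase increment. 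The target membership is then supplied by \autoref{prop:out-iff-east}: I must show that the $\sigma_\nu$ rows shifted east by $\laf_{\xi_\nu}$ are exactly the rows whose right-hand point becomes preceded by an east step, so that $\Psi(D)$ has out-degree $\sigma_\nu$ and hence lies in $\Doutnu$.

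For invertibility I would construct $\Psi^{-1}\colon \Doutnu \to \Dinnu$ by reversing each half: given $E \in \Doutnu$, subtract the staircase to obtain $\laf_E - \laf_{\xi_\nu}$ (which I would show is nonnegative and weakly increasing using \autoref{prop:out-iff-east}), and then undo the hit-reduction by re-inserting east steps row by row, exactly as in the reverse direction of \autoref{lem:bij_nm}. Proving that the two halves are mutually inverse, and that the reverse hit-reduction lands back in $\Dinnu$, should reduce to the same row-by-row bookkeeping carried out in \autoref{lem:bij_nm}, now tracked entirely through left area vectors rather than through a fixed auxiliary path $\nu'$.

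The hard part will be controlling how the maximal staircase $\xi_\nu$ interacts with the repeated entries of $\laf_\nu$. In the $m$-Dyck case $\laf_\nu = (0,m,2m,\dots)$ has no repeats and $\xi_\nu$ fits uniformly, one cell per row, so both corrections are transparent; for general $\nu$ the staircase is concentrated on the rows singled out by the staircase algorithm, and \autoref{cor:east-only-same} shows that east steps may be added only on rows $j$ with $(\laf_\nu)_j = (\laf_\nu)_{j-1}$. The delicate point is to prove that the rows on which $\widehat{\cdot}$ frees slack and the rows on which adding $\laf_{\xi_\nu}$ is legal coincide, so that $\Psi$ neither pushes the path below $\nu$ nor overshoots the east-step count; equivalently, that the staircase corrections of the in- and out-degree sides are governed by the same repeated-entry structure of $\laf_\nu$. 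As a sanity check and a guide I would first re-derive $\order{\Dinnu} = \order{\Doutnu}$ for the self-dual standard Tamari lattice $\mbbT_n$, where an order-reversing automorphism swaps up- and down-covers outright, and then test whether the symmetry $\mbbT_\nu \cong \mbbT_{\overleftarrow{\nu}}$ is compatible with $\Psi$; I emphasise that this symmetry alone cannot close the argument, since $\nu$-Tamari lattices are not self-dual for $m \geq 2$ and so no single order-reversing map is available in general.
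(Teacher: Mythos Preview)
This statement is a \emph{conjecture} in the paper, not a theorem; the paper offers no proof, only the computational evidence in the table and the special case $\nu=(NE^m)^n$ settled by \autoref{cor:in-eq-out}. So there is no ``paper's own proof'' to compare against, and what you have written is a research plan rather than a proof.

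As a plan it has a genuine gap at exactly the place you flag as ``the delicate point''. In the $m$-Dyck case the two halves of your composite are balanced: an element $D\in\Dinnm$ has $t_i=h_i$ and $\horiz{r_i}\neq 0$ for \emph{every} $i>1$, so the hit-reduction $D\mapsto\widehat{D}$ removes exactly one east step from each of the last $n-1$ rows, and adding back $\laf_{\xi_\nu}=(0,1,\dots,n-1)$ restores one east step in each of those same rows. For arbitrary $\nu$ this balance disappears. An element of $\Dinnu$ has only $\sigma_\nu$ indices with $t_i=h_i$ and $\horiz{r_i}\neq 0$, and these indices need not be the top $\sigma_\nu$ rows; meanwhile the hit-count defining $\widehat{D}$ uses \emph{all} hit points $h_j$, so the reduction can remove many more east steps, and from rows having nothing to do with the support of $\laf_{\xi_\nu}$. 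You give no argument that $\laf_{\widehat{D}}+\laf_{\xi_\nu}$ is even a weakly increasing vector bounded by $\laf_\nu$, let alone that the resulting path has exactly $\sigma_\nu$ right-hand points preceded by an east step. \autoref{cor:east-only-same} constrains where east steps appear in the \emph{output of the area algorithm}, not in an arbitrary element of $\Dinnu$, so it does not supply the row-matching you need.

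Concretely, take one of the paper's bad paths, $\nu=ENNNEENE$ with $\laf_\nu=(1,1,1,3)$ and $\laf_{\xi_\nu}=(0,0,1,2)$, and try your $\Psi$ on a few elements of $\Dinnu$ by hand: you will see that $\laf_{\widehat{D}}+\laf_{\xi_\nu}$ is not automatically the left area vector of a $\nu$-Dyck path, and that the rows where $\widehat{\cdot}$ creates slack are not the last two. Until you either prove this compatibility in general or replace $\Psi$ by a map tailored to the repeated entries of $\laf_\nu$, the proposal remains a heuristic. Your closing remarks about self-duality of $\mbbT_n$ and the $\nu\leftrightarrow\overleftarrow{\nu}$ symmetry are correct but, as you note, cannot close the argument since $\mbbT_\nu$ is not self-dual in general.
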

This was verified for all $\nu$ with east and north steps as described in the table above and was verified for all $\nu = (NE^m)^n$ in \autoref{cor:in-eq-out}.

Some future directions in this area would be to study why there is a set bijection for some $\nu$, but not for others.
Is there some way to classify for which $\nu$ there is a $\nu'$ weakly above $\nu$ for which $\Dinnu$ has a set bijection?
Is there some (other) combinatorial object that gives us the numbers in the table?
Is there some nice formula in two variables which would give us the table above?

\section{Acknowledgements}
The author would like to thank Fr\'ed\'eric Chapoton for many fruitful conversations and discussions.

\nocite{*}
\printbibliography[title={References}]

@article{PV-extTam,
	doi = {10.1090/tran/7004},
	year = 2017,
	month = {3},
	publisher = {American Mathematical Society ({AMS})},
	volume = {369},
	number = {7},
	pages = {5219--5239},
	author = {Louis-Fran{\c{c}}ois Pr{\'{e}}ville-Ratelle and Xavier Viennot},
	title = {The enumeration of generalized {T}amari intervals},
	journal = {Trans. Amer. Math. Soc.}
}

@article{BFP-noIntTam,
  doi = {10.37236/2027},
  year = {2012},
  month = {1},
  publisher = {The Electronic Journal of Combinatorics},
  volume = {18},
  number = {2},
  author = {Mireille Bousquet-M{\'{e}}lou and {\'{E}}ric Fusy and Louis-Fran{\c{c}}ois Pr{\'{e}}ville-Ratelle},
  title = {The Number of Intervals in the $m$-{T}amari Lattices},
  journal = {The Electronic Journal of Combinatorics}
}

@article{Ceballos_2018,
	doi = {10.1090/tran/7405},
	year = 2018,
	month = {11},
	publisher = {American Mathematical Society ({AMS})},
	volume = {371},
	number = {4},
	pages = {2575--2622},
	author = {Cesar Ceballos and Arnau Padrol and Camilo Sarmiento},
	title = {Geometry of $\nu$-Tamari lattices in types $A$ and $B$},
	journal = {Trans. Amer. Math. Soc.}
}

@article{Ceballos_2020,
    doi = {10.37236/8000},
    year = 2020,
    month = {01},
    journal = {Elec. J. of Comb.},
    volume = {27},
    number = {1},
    pages = {1 --14},
    author = {Cesar Ceballos and Arnau Padrol and Camilo Sarmiento},
    title = {The $\nu$-Tamari Lattice via $\nu$-Trees, $\nu$-Bracket Vectors, and Subword Complexes},
}

@article{Bergeron_2012,
	doi = {10.4310/joc.2012.v3.n3.a4},
	year = 2012,
	publisher = {International Press of Boston},
	volume = {3},
	number = {3},
	pages = {317--341},
	author = {Fran{\c{c}}ois Bergeron and Louis-Fran{\c{c}}ois Pr{\'{e}}ville-Ratelle},
	title = {Higher trivariate diagonal harmonics via generalized {T}amari posets},
	journal = {Journal of Combinatorics}
}

@article{Chapoton_2020,
	doi = {10.5802/alco.98},
	year = 2020,
	publisher = {Cellule {MathDoc}/{CEDRAM}},
	volume = {3},
	number = {2},
	pages = {433--463},
	author = {Fr{\'{e}}d{\'{e}}ric Chapoton},
	title = {Some properties of a new partial order on~{D}yck paths},
	journal = {Algebraic Combinatorics}
}

@article{Chapoton_2005,
    title= {Sur le nombre d’intervalles dans les treillis de Tamari},
	author = {Fr{\'{e}}d{\'{e}}ric Chapoton},
    journal = {S{\'{e}}m. Lothar. Combin.},
    year = {2005},
    month = {7},
    pages = {18pp},
    volume = {55:Art. B55f}
}

@article{Defant_2022,
    title = {Meeting covered elements in $\nu$-Tamari lattices},
    journal = {Advances in Applied Mathematics},
    volume = {134},
    pages = {102303},
    year = {2022},
    issn = {0196-8858},
    doi = {https://doi.org/10.1016/j.aam.2021.102303},
    author = {Colin Defant},
}

@article{Fang_2017,
    title = {The enumeration of generalized Tamari intervals},
    journal = {European Journal of Combinatorics},
    volume = {61},
    pages = {69-84},
    year = {2017},
    issn = {0195-6698},
    doi = {https://doi.org/10.1016/j.ejc.2016.10.003},
    author = {Wenjie Fang and Louis-François Préville-Ratelle},
}

@article{Tamari_1954,
	doi = {10.24033/bsmf.1446},
	year = 1954,
	publisher = {Societe Mathematique de France},
	volume = {79},
	pages = {53--96},
	author = {Dov Tamari},
	title = {Mono{\"{i}}des pr{\'{e}}ordonn{\'{e}}s et cha{\^{i}}nes de {M}alcev},
	journal = {Bul. Soc. Math. France}
}

@article{Stasheff_1963,
	doi = {10.2307/1993608},
	year = 1963,
	month = {8},
	publisher = {{JSTOR}},
	volume = {108},
	number = {2},
	pages = {275},
	author = {James Dillon Stasheff},
	title = {Homotopy Associativity of H-Spaces. I},
	journal = {Transactions of the American Mathematical Society}
}

@misc{Francois_2012,
  doi = {10.48550/ARXIV.1202.6269},
  url = {https://arxiv.org/abs/1202.6269},
  author = {Bergeron, Francois},
  keywords = {Combinatorics (math.CO), FOS: Mathematics, FOS: Mathematics, 05E10, 05E18, 05E05},
  title = {Combinatorics of r-Dyck paths, r-Parking functions, and the r-Tamari lattices},
  publisher = {arXiv},
  year = {2012},
  copyright = {arXiv.org perpetual, non-exclusive license}
}

@misc{Bell_2021_2,
  doi = {10.48550/ARXIV.2101.10425},
  url = {https://arxiv.org/abs/2101.10425},
  author = {Matias von Bell and Rafael S. González D'León and Francisco A. Mayorga Cetina and Martha Yip},
  keywords = {Combinatorics (math.CO), FOS: Mathematics, FOS: Mathematics, 52B05, 52B11, 52B20, 52B22 (Primary) 05C21 (Secondary)},
  title = {A unifying framework for the $\nu$-Tamari lattice and principal order ideals in Young's lattice},
  publisher = {arXiv},
  year = {2021},
  copyright = {arXiv.org perpetual, non-exclusive license}
}

@article{Bell_2021,
    title = {Schröder combinatorics and $\nu$-associahedra},
    journal = {European Journal of Combinatorics},
    volume = {98},
    pages = {103415},
    year = {2021},
    issn = {0195-6698},
    doi = {https://doi.org/10.1016/j.ejc.2021.103415},
    author = {Matias von Bell and Martha Yip},
}

@manual{sagemath,
  Key          = {SageMath},
  Author       = {{The Sage Developers}},
  Title        = {{S}ageMath, the {S}age {M}athematics {S}oftware {S}ystem ({V}ersion x.y.z)},
  note         = {{\tt https://www.sagemath.org}},
  Year         = {2022},
}
\label{sec:biblio}

\end{document}